\title{\scshape 
On the commutator modulus of continuity for operator monotone functions \sffamily }
\author{\scshape \sffamily David Herrera
}
\newtheorem{prop}{Proposition}[section]
\newtheorem{corollary}[prop]{Corollary}
\newtheorem{thm}[prop]{Theorem}
\newtheorem{lemma}[prop]{Lemma}
\newtheorem{conj}[prop]{Conjecture}
\theoremstyle{definition}
\newtheorem{remark}[prop]{Remark}
\newtheorem{example}[prop]{Example}
\newcommand{\R}{\mathbb{R}}
\newcommand{\C}{\mathbb{C}}
\renewcommand{\P}{\mathbb{P}}
\newcommand{\erf}{\operatorname{erf}}
\newcommand{\CL}{\operatorname{CL}}
\newcommand{\Lip}{\operatorname{Lip}}
\newcommand{\M}{{\mathcal M}}
\newcommand{\wM}{\widehat{\mathcal M}}
\newcommand{\wL}{\widehat{L}^1}
\renewcommand{\Im}{\operatorname{Im}}
\renewcommand{\Re}{\operatorname{Re}}
\renewcommand{\H}{{\mathcal H}^{N-1}}
\newcommand{\diag}{\operatorname{diag}}
\newcommand{\bp}{\begin{pmatrix}}
\newcommand{\ep}{\end{pmatrix}}
\renewcommand{\H}{{\mathcal H}}
\newcommand{\vertiii}[1]{{\left\vert\kern-0.25ex\left\vert\kern-0.25ex\left\vert #1 
    \right\vert\kern-0.25ex\right\vert\kern-0.25ex\right\vert}}
\newcommand{\vertii}[1]{{\left\vert\kern-0.25ex\left\vert #1 
    \right\vert\kern-0.25ex\right\vert}}
\newcommand{\verti}[1]{{\left\vert #1 
    \right\vert}}
\newcommand{\dark}{}
\begin{document}
\dark
\large

\maketitle

\abstract{Let $f \geq 0$ be operator monotone on $[0, \infty)$. In this paper we prove that for any unitarily-invariant norm $|||-|||$ on $M_n(\C)$ and matrices $A, B, X \in M_n(\C)$ with $A, B \geq 0$ and $|||X||| \leq 1$,
\[|||f(A)X-Xf(B)||| \leq C f(|||AX-XB|||)\] for $C < 1.01975$. We do this by reducing this inequality to a function approximation problem and we choose approximate minimizers. This is much progress toward the conjecturally optimal value of $C=1$ which is known only in the case of the Hilbert-Schmidt norm.
When $|||-|||$ is the the operator norm $||-||$, we obtain a great reduction of the previously known estimate of $C = 1.25$.

We further prove that for $|||X||| \leq 1$,
\[|||A^{1/2}X-XB^{1/2}||| \leq 1.00891 |||AX-XB|||.\]
This is a great improvement toward the conjecture of G. Pedersen that this inequality for $|||-|||$ being the operator norm holds with $C = 1$. 

We discuss other related inequalities, including some sharp commutator inequalities.
We also prove a sharp equivalence inequality between the operator modulus of continuity and the commutator modulus of continuity for continuous functions on $\mathbb{R}$.}

\section{Introduction}

Let $B(\H)$ be the space of bounded operators on a separable Hilbert space $\H$. A unitarily-invariant norm $\vertiii{-}$ for $B(\H)$ is a norm defined on a closed ideal $\mathscr J$ in $B(\H)$ such that $\vertiii{UXV} = \vertiii{X}$ for all unitary operators $U, V\in B(\H)$ and $X \in \mathscr J$. If the unitarily-invariant norm is the operator norm denoted by $\vertii{-}$ or $\H$ is finite dimensional, then $\mathscr J = B(\H)$ and otherwise, it will be a subspace of the compact operators.

Unitarily-invariant norms satisfy the property that for all $X, Z \in B(\H)$ and $Y \in \mathscr J$,
\begin{equation}\label{unitary norm prop}
\vertiii{XYZ} \leq \vertii{X}\cdot\vertiii{Y}\cdot\vertii{Z}.
\end{equation}
In this paper, if a unitarily-invariant norm that we are considering is not the operator norm, then the operators considered will always act on a separable Hilbert space $\H$. Unless stated otherwise, the reader should assume that $\H$ is finite dimensional (and hence $B(\H) = M_n(\C)$) if a unitarily-invariant norm that we are considering is not the operator norm. We do this primarily to simply the exposition.

The operator norm and the Schatten-$p$ norms are unitarily-invariant norms. For $X \in M_n(\C)$, let $\sigma_j(X)$ be the decreasingly-indexed singular values of $X$. The Ky Fan norms of $X$ are defined as $\vertii{X}_{(k)} = \sum_{j=1}^k \sigma_j(X)$. The Ky Fan norms are unitarily invariant and Ky Fan's theorem states that $\vertii{X}_{(k)} \leq \vertii{Y}_{(k)}$ for each $k$ if and only if $\vertiii{X} \leq \vertiii{Y}$ for all unitarily-invariant norms. In particular, $\sigma_j(X) \leq \sigma_j(Y)$ for all $1\leq j \leq n$ is sufficient for $\vertiii{X} \leq \vertiii{Y}$ for all unitarily-invariant norms but is not necessary. 

Note the following generalization of Ky Fan's theorem: Consider the norms $\vertii{Y}_\alpha = \sum_{j=1}^n \alpha_j \sigma_j(Y)$ where $\alpha_{j} \geq \alpha_{j+1}$ and $\alpha_j \geq 0$. These are non-negative linear combinations of Ky Fan norms.  Corollary 3.5.1 of \cite{Horn Johnson Topics} states that if $g(x) \geq 0$ for $x \in [0, \infty)^m$ is increasing in each argument and $X, X_1, \dots, X_m \in M_n(\C)$ then
\[\vertiii{X} \leq g(\vertiii{X_1}, \dots, \vertiii{X_m})\]
holds for all unitarily invariant norms $\vertiii{-}$ if and only if it holds for all $\vertii{-}_\alpha$ norms.

\vspace{0.1in}

We now review the terminology and results discussed in \cite{FPUSAO}. See also \cite{OMC} and \cite{OCMC}.
Let $f$ be a continuous function on $\R$ with modulus of continuity $\omega_f(\delta) = \sup \{|f(x)-f(y)|: |x-y| \leq \delta\}$. Here are two operator generalizations:
\[\Omega_f(\delta) = \sup \{\vertii{f(A)-f(B)}: A^\ast = A, B^\ast = B, \vertii{A-B} \leq \delta\}\]
\[\Omega_f^\flat(\delta) = \sup \{\vertii{f(A)X-Xf(B)}: A^\ast = A, B^\ast = B, \vertii{X} \leq 1, \vertii{AX-XB} \leq \delta\},\]
where $A, B, X \in B(\H)$ for any $\H$.
The functions of $\delta$: $\Omega_f(\delta)$ and $\Omega_f^\flat(\delta)$ are monotonically increasing functions from $(0, \infty)$ into $[0, \infty]$. They satisfy the property that $\delta^{-1}\Omega_f(\delta)$ and $\delta^{-1}\Omega_f^\flat(\delta)$ are non-decreasing so one can show that $\Omega_f(\delta)$ and $\Omega_f^\flat(\delta)$ are continuous and subadditive.

The value of $\Omega_f^\flat$ is unchanged with restricting to one or both of: $B = A$ or $X$ self-adjoint. Note that $\omega_f \leq \Omega_f$ because we can choose $A, B \in \R$ and $\Omega_f \leq \Omega_f^\flat$ because we can choose $X = I$. 
If one reduces the definition of $\Omega_f^\flat$ by requiring that $A = B$ and $X=U$ be unitary then its value changes to $\Omega_f$ because
\begin{align}
\vertii{f(A)U-Uf(A)} &= \vertii{U(U^\ast f(A)U-f(A))} = \vertii{ f(U^\ast AU)-f(A)} \nonumber \\
&\leq \Omega_f(\vertii{U^\ast AU-A}) = \Omega_f(\vertii{AU-UA}). \label{unitary conjugation}
\end{align}

This fact provides a way of obtaining an upper bound for $\Omega^\flat_f$ in terms of $\Omega_f$ as follows. Based on the calculations in Theorem 5.3 of \cite{OCMC}, one can use a block self-adjoint operator $\mathcal A = A \oplus A$ and for any $\tau \in (0,1)$ a block unitary matrix $\mathcal U$ with upper left block being $\tau X$ to obtain the following: for any $A, X$ self-adjoint and $\vertii{X} \leq 1$, 
\begin{equation}\label{block unitary}
\vertii{[f(A)X-Xf(A)} \leq \tau^{-1}\Omega_f\left((\tau+\tau^2(1-\tau^2)^{-1/2})\vertii{AX-XA}\right).
\end{equation}
Then \cite{OCMC} uses $\tau = 1/2$ and the fact that $\Omega_f$ is monotonically increasing  to obtain $\Omega^\flat_f \leq 2\Omega_f$. However, if one instead uses $\tau = 0.582$, we obtain the slightly sharper bound $\Omega^\flat_f \leq 1.72\Omega_f$. 

Note that for a closed subset $\mathfrak F$ of $\R$, we can define $\Omega_{f, \mathfrak F}, \Omega^\flat_{f, \mathfrak F}$ to require the spectrum of $A$ and $B$ belong to $\mathfrak F$. All these results extend.

\vspace{0.1in}

This currrent paper addresses two questions. The first question is what is the optimal constant for the inequality: $\Omega^\flat_f(\delta) \leq Const.\Omega_f(\delta)$. We show that this constant is $\csc(1)\approx 1.1884$. The second and more involved question is the open problem of whether $\Omega_f^\flat(\delta) = f(\delta)$ when $f \geq 0$ is operator monontone on $[0,\infty)$ with $f(0)=0$ and also a generalization of this to unitarily-invariant norms. 

We now discuss $\Omega_f$ for $f \geq 0$ operator monotone on $[0, \infty)$.
Kittaneh and Kosaki proved the following:
\begin{thm} \label{Kittaneh Kosaki} (\cite{Kittaneh Kosaki})
Suppose that $A, B \geq 0$ are matrices in $M_n(\C)$. Let $\vertii{-}$ be the operator norm on $M_n(\C)$ and $f$ be a non-negative matrix monotone function on $[0, \infty)$ with $f(0) = 0$.
Then 
\[\vertii{f(A) - f(B)} \leq f(\vertii{A-B}).\]
\end{thm}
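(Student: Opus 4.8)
The plan is to exploit the integral representation of a non-negative operator monotone function vanishing at $0$. Such an $f$ admits a representation
\[
f(x) = ax + \int_{(0,\infty)} \frac{x}{x+t}\,d\mu(t)
\]
for some $a \geq 0$ and a positive measure $\mu$ on $(0,\infty)$ with $\int (1+t)^{-1}\,d\mu(t) < \infty$; equivalently $f(x)/x$ is completely monotone and $f$ is a positive combination (integral) of the elementary functions $x$ and $g_t(x) = x/(x+t) = 1 - t(x+t)^{-1}$. Since the operator norm satisfies the triangle inequality and is homogeneous, it suffices to prove the estimate $\vertii{g(A) - g(B)} \leq g(\vertii{A-B})$ for each building block $g$, because then
\[
\vertii{f(A)-f(B)} \leq a\vertii{A-B} + \int \vertii{g_t(A)-g_t(B)}\,d\mu(t) \leq a\vertii{A-B} + \int g_t(\vertii{A-B})\,d\mu(t) = f(\vertii{A-B}),
\]
using that each $g_t$ is itself non-negative operator monotone with $g_t(0)=0$ and that $f(\delta)$ is exactly this integral evaluated at $\delta = \vertii{A-B}$.

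The linear term is trivial. For $g_t(x) = 1 - t(x+t)^{-1}$ with $A, B \geq 0$, I would write
\[
g_t(A) - g_t(B) = t\bigl((B+t)^{-1} - (A+t)^{-1}\bigr) = t\,(A+t)^{-1}(A-B)(B+t)^{-1}.
\]
Applying the submultiplicativity of the operator norm (inequality~\eqref{unitary norm prop}, or just the Banach-algebra norm property) gives
\[
\vertii{g_t(A)-g_t(B)} \leq t\,\vertii{(A+t)^{-1}}\,\vertii{A-B}\,\vertii{(B+t)^{-1}} \leq t\cdot \frac{1}{t}\cdot \vertii{A-B}\cdot\frac{1}{t} = \frac{\vertii{A-B}}{t},
\]
since $A, B \geq 0$ forces $\vertii{(A+t)^{-1}} \leq 1/t$. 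This bound is good when $t$ is large but useless when $t$ is small, so it must be combined with the crude bound $\vertii{g_t(A)-g_t(B)} \leq \vertii{g_t(A)} + \vertii{g_t(B)} \leq 2$ — or better, I would try to show directly $\vertii{g_t(A)-g_t(B)} \leq \delta/(\delta+t) = g_t(\delta)$ with $\delta = \vertii{A-B}$. The cleanest route to the latter is a monotonicity/localization argument: one shows that the extreme value of $\vertii{g_t(A)-g_t(B)}$ over $A,B\geq 0$ with $\vertii{A-B}\leq\delta$ is attained, and then reduces to a two-dimensional (or scalar) computation using that $g_t$ is operator monotone, where $|g_t(x)-g_t(y)| \leq g_t(|x-y|)$ holds pointwise for $x,y\geq 0$ because $g_t$ is non-negative, concave, increasing, with $g_t(0)=0$ (hence subadditive).

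The main obstacle is the passage from the scalar inequality $|g_t(x)-g_t(y)|\le g_t(|x-y|)$ to its operator-norm version: submultiplicativity alone gives only $\delta/t$, which overshoots $g_t(\delta)=\delta/(\delta+t)$. The fix Kittaneh and Kosaki use — and what I would reproduce — is to not estimate each $g_t$ in isolation but to work with the full resolvent difference and a clever factorization, or alternatively to use the fact that for operator monotone $f$ the map $X \mapsto f(A+X) - f(A)$ has a bounded derivative controlled by $\omega_f$; concretely, one writes $f(A)-f(B) = \int_0^1 \frac{d}{ds} f(B + s(A-B))\,ds$ and estimates the Fréchet derivative $Df(C)[A-B]$ in operator norm by $\omega_f'$-type quantities, exploiting that $Df(C)[H] = \int (C+t)^{-1} H (C+t)^{-1}\,d\mu(t) + aH$ and that the positive kernel $(C+t)^{-1}(\cdot)(C+t)^{-1}$ is a completely positive map whose norm is $\vertii{(C+t)^{-1}}^2 \le$ something better when paired against the concavity of $f$. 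Sorting out this last estimate so that the constant is exactly $1$ (not merely bounded) is the crux; everything else is bookkeeping with the integral representation.
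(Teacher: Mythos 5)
There is a genuine gap, and you identify it yourself without closing it. Your reduction via the integral representation $f(x)=ax+\int g_t(x)\,d\mu(t)$, $g_t(x)=x/(x+t)$, is sound bookkeeping, but it does not reduce the difficulty: the inequality $\vertii{g_t(A)-g_t(B)}\le g_t(\vertii{A-B})$ is itself an instance of the theorem being proved (each $g_t$ is non-negative operator monotone with $g_t(0)=0$). The only concrete estimate you supply for it, the resolvent factorization giving $\vertii{g_t(A)-g_t(B)}\le \vertii{A-B}/t$, overshoots $g_t(\delta)=\delta/(\delta+t)$ for every $t$, and badly so for small $t$; combining it with the crude bound $2$ can never recover the constant $1$. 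The remainder of your proposal ("monotonicity/localization argument," "clever factorization," Fr\'echet-derivative estimates) is a description of where a proof would have to live, not a proof, and you concede as much in your final sentence.

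The missing idea is to avoid estimating the norm of the difference directly and instead derive a two-sided \emph{operator} inequality from monotonicity. Set $\delta=\vertii{A-B}$; then $A\le B+\delta I$, so operator monotonicity gives $f(A)\le f(B+\delta I)$. The crux is the operator inequality $f(B+\delta I)\le f(B)+f(\delta)I$, which your integral representation handles painlessly: for each building block,
\[
g_t(x+\delta)-g_t(x)=\frac{t}{x+t}-\frac{t}{x+\delta+t}=\frac{t\delta}{(x+t)(x+\delta+t)}\le\frac{\delta}{\delta+t}=g_t(\delta)
\quad\text{for all }x\ge 0,
\]
and since the left-hand side is a bounded real function of $x$ on $[0,\infty)$ with supremum $g_t(\delta)$ attained at $x=0$, the functional calculus yields $g_t(B+\delta I)-g_t(B)\le g_t(\delta)I$ for every $B\ge 0$; integrating against $d\mu$ and adding the linear term gives the claim for $f$. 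Hence $f(A)-f(B)\le f(\delta)I$, and by symmetry $f(B)-f(A)\le f(\delta)I$, so the self-adjoint operator $f(A)-f(B)$ satisfies $\vertii{f(A)-f(B)}\le f(\delta)$. Note also that the paper does not prove this theorem; it cites \cite{Kittaneh Kosaki} and observes that it follows from Ando's Theorem \ref{Ando} via $f(\vertii{A-B})=\vertii{f(|A-B|)}$ for the operator norm, so there is no in-paper argument to compare against --- but as written your proposal does not constitute a proof.
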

This shows that $f = \omega_f = \Omega_f$ for $f\geq 0$ operator monotone. Note that if one allows $f(0) > 0$ then these identities hold for $f(x)-f(0) < f(x)$.

As remarked in \cite{Kittaneh Kosaki}, this result cannot be extended in the form $\vertiii{f(A) - f(B)} \leq f(\vertiii{A-B})$ for more general unitarily-invariant norms even if $B = 0$. As an example, if we consider $f(x) = x^{r}$ and $A \geq 0$ then 
\[\vertii{A^{r}}_{(k)}=\sum_{j=1}^k \sigma_j(A)^{r} \leq k^{1-r}\left(\sum_{j=1}^k \sigma_j(A)\right)^{r} = \vertii{I}_{(k)}^{1-r}\vertii{A}_{(k)}^{r},\]
with equality when $A$ is a multiple of the identity (Exercise 1.14(d) of \cite{Grafakos}).

Noticing that $f(\vertii{A-B}) = \vertiii{f(|A-B|)}$, we see that the following inequality by Ando generalizes Kittaneh and Kosaki's above inequality.
\begin{thm}\label{Ando}
(\cite{Ando Norms of differences of f})
Suppose that $A, B \geq 0$ are matrices in $M_n(\C)$. Let $\vertiii{-}$ be a unitarily invariant norm on $M_n(\C)$ and $f$ be a non-negative matrix monotone function on $[0, \infty)$.
Then 
\[\vertiii{f(A) - f(B)} \leq \vertiii{f(|A-B|)}.\]
\end{thm}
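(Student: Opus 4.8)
The plan is to deduce Ando's theorem from Kittaneh–Kosaki's (Theorem~\ref{Kittaneh Kosaki}) together with the integral representation of operator monotone functions. First recall that every non-negative operator monotone $f$ on $[0,\infty)$ admits a representation
\[
f(x) = \alpha + \beta x + \int_{(0,\infty)} \frac{x}{x+t}\,d\mu(t),
\]
with $\alpha = f(0) \ge 0$, $\beta \ge 0$, and $\mu$ a positive measure on $(0,\infty)$ with $\int (1+t)^{-1} d\mu(t) < \infty$. The $\alpha$-term contributes $0$ to both sides of the desired inequality, and the linear $\beta x$-term is handled by the triangle inequality since $\vertiii{\beta A - \beta B} = \vertiii{\beta|A-B|} \le \vertiii{f(|A-B|)}$ (after the other pieces are added back in — more precisely one should peel off all three pieces simultaneously, see below). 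So the crux is the family $f_t(x) = x/(x+t)$, equivalently (after scaling) $g_t(x) = t - t^2/(x+t) = tx/(x+t)$; it suffices to treat $\phi_t(x) = -1/(x+t)$, or even just $\phi(x) = -x^{-1}$ on $(0,\infty)$ by rescaling, up to additive constants which again drop out.

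The key step is therefore the resolvent inequality: for $A,B \ge 0$ (say first strictly positive, then by continuity in general) and any unitarily-invariant norm,
\[
\vertiii{\,(A+t)^{-1} - (B+t)^{-1}\,} \le \vertiii{\,|A-B|\,\cdot\,\text{(something bounded by }(A+t)^{-1}\text{ and }(B+t)^{-1})\,}.
\]
The algebraic identity $(A+t)^{-1} - (B+t)^{-1} = (A+t)^{-1}(B-A)(B+t)^{-1}$ combined with \eqref{unitary norm prop} gives $\vertiii{(A+t)^{-1}-(B+t)^{-1}} \le \vertii{(A+t)^{-1}}\cdot\vertiii{B-A}\cdot\vertii{(B+t)^{-1}} \le t^{-2}\vertiii{A-B}$, but that is far too weak: it does not reassemble into $\vertiii{f(|A-B|)}$. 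The right approach is singular-value comparison: one wants $\sigma_j\big((A+t)^{-1}-(B+t)^{-1}\big) \le \sigma_j\big((|A-B|+t)^{-1}-t^{-1}\big)$ for every $j$, i.e. a pointwise (in $j$) domination by the scalar function $x\mapsto 1/t - 1/(x+t)$ applied to the singular values of $A-B$. By the generalized Ky Fan theorem quoted in the introduction (Corollary 3.5.1 of \cite{Horn Johnson Topics}), it is enough to prove the resulting inequality for every Ky Fan norm $\vertii{-}_{(k)}$, and for those one can hope to invoke Theorem~\ref{Kittaneh Kosaki} on suitable compressions or apply known Ky Fan majorization results for operator monotone functions directly.

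Concretely, I would proceed as follows: (i) reduce to $f$ of the form $x\mapsto x/(x+t)$ by the integral representation, carefully checking that the reduction respects unitarily-invariant norms (using that $\vertiii{\int \cdots d\mu} \le \int \vertiii{\cdots}\,d\mu$ and that the map $Y \mapsto \vertiii{f(|Y|)}$ is superadditive under the decomposition because $f$ is concave with $f(0)\ge 0$, so $\vertiii{f(|A-B|)} \ge \alpha\cdot 0 + \beta\vertiii{|A-B|} + \int \vertiii{f_t(|A-B|)}\,d\mu(t)$ — this concavity/superadditivity bookkeeping is where care is needed); (ii) for fixed $t$, establish the Ky Fan inequality $\vertii{f_t(A)-f_t(B)}_{(k)} \le \vertii{f_t(|A-B|)}_{(k)}$ for all $k$; (iii) conclude via Ky Fan / Corollary 3.5.1. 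The main obstacle is step (ii): the operator-norm case ($k=1$) is exactly Kittaneh–Kosaki, but extending it to all Ky Fan norms requires more than \eqref{unitary norm prop} — one needs a singular-value estimate of the form $\sigma_j(f_t(A)-f_t(B)) \le f_t(\sigma_j(A-B)) $ summed appropriately, which should follow by combining the resolvent identity with standard results that $\sigma_{i+j-1}(XY) \le \sigma_i(X)\sigma_j(Y)$ and a telescoping/interpolation argument, or alternatively by an integral-representation-free argument using that $f_t(A)-f_t(B)$ can be written as a Schur multiplier (divided-difference) acting on $A-B$ and that the relevant Schur multiplier has a contractive-type bound after the scalar normalization. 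I expect the bookkeeping in (i) and the singular-value argument in (ii) to be the two places demanding genuine work, with (ii) being the harder of the two.
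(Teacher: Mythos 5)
You should first note that the paper does not prove Theorem~\ref{Ando} at all: it is quoted with a citation to Ando's 1988 paper, so there is no in-paper argument to compare against and your proposal has to stand on its own. As written it does not. The first genuine gap is in your step (i). After applying the triangle inequality to $f(A)-f(B)=\beta(A-B)+\int (f_t(A)-f_t(B))\,d\mu(t)$ and the (hoped-for) inequality for each $f_t$, you are left needing
\[
\beta\vertiii{\,|A-B|\,}+\int \vertiii{f_t(|A-B|)}\,d\mu(t)\ \leq\ \vertiii{f(|A-B|)},
\]
which is a \emph{reverse} triangle inequality for the positive summands of $f(|A-B|)$. Unitarily invariant norms are subadditive, not superadditive, and this fails even for commuting, similarly ordered positive operators: take $\vertiii{X}=\max\bigl(\sigma_1(X),\tfrac{2}{3}(\sigma_1(X)+\sigma_2(X))\bigr)$ on $M_2(\C)$, $P=\diag(1,0)$, $Q=\diag(0.1,0.1)$; then $\vertiii{P}+\vertiii{Q}=1+\tfrac{2}{15}>1.1=\vertiii{P+Q}$. (Concavity of $f$ does not rescue this; it is a statement about the norm, not about $f$.) The decomposition \emph{is} additive for every Ky Fan norm, since all summands are increasing functions of $|A-B|$ and hence attain their top $k$ singular values on the same eigenvectors; so the reduction to $f_t$ is salvageable only if the entire argument is carried out at the level of weak majorization and Ky Fan dominance is invoked once at the very end. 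Note the contrast with the reduction (\ref{f_t reduction}) in Section~\ref{Reductions}, which survives the triangle inequality precisely because its right-hand side is the \emph{scalar} $f(\vertiii{[A,X]})$, for which additivity over the integral representation is exact.

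The second and larger gap is that your step (ii) — the Ky Fan inequality $\vertii{f_t(A)-f_t(B)}_{(k)}\leq\vertii{f_t(|A-B|)}_{(k)}$ for all $k$ — is the actual content of Ando's theorem and is nowhere proved. The routes you list are not arguments: the entrywise domination $\sigma_j(f_t(A)-f_t(B))\leq f_t(\sigma_j(A-B))$ for every $j$ is strictly stronger than the weak majorization Ando establishes and is not a known result; Kittaneh--Kosaki applied ``on suitable compressions'' does not bootstrap the $k=1$ case to Ky Fan $k$-norms because compressions do not commute with the functional calculus; and the $\sigma_{i+j-1}(XY)\leq\sigma_i(X)\sigma_j(Y)$/telescoping and Schur-multiplier suggestions are left entirely unexecuted. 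Ando's actual proof runs differently: writing $A-B=C_+-C_-$, one uses $A\leq B+C_+$ and a subadditivity-type majorization $f(B+C)-f(B)\prec_w f(C)$ (proved via the integral representation for the resolvent) to control the positive and negative parts of the spectrum of $f(A)-f(B)$ separately by $\lambda(f(C_+))$ and $\lambda(f(C_-))$, and then assembles the singular-value weak majorization against $f(|A-B|)=f(C_++C_-)$. Your outline correctly identifies where the difficulty lives but does not close it, and step (i) as stated is false for general unitarily invariant norms.
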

We then have as a corollary
\begin{corollary}\label{Ando Norms of differences of f}
Suppose that $A, B \geq 0$ are matrices in $M_n(\C)$. Let $\vertiii{-}$ be a unitarily invariant norm on $M_n(\C)$ and $f$ be a non-negative matrix monotone function on $[0, \infty)$.
Then 
\[\vertiii{f(A) - f(B)} \leq \vertiii{I}f\left(\frac{\vertiii{A-B}}{\vertiii{I}}\right).\]
\end{corollary}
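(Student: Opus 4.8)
The plan is to deduce Corollary~\ref{Ando Norms of differences of f} from Ando's Theorem~\ref{Ando} by controlling the right-hand side $\vertiii{f(|A-B|)}$ in terms of $\vertiii{I}$ and $f(\vertiii{A-B}/\vertiii{I})$. Write $C = |A-B| \geq 0$ with singular values (equivalently, eigenvalues) $\sigma_1 \geq \cdots \geq \sigma_n \geq 0$, so that $\vertiii{f(|A-B|)} = \vertiii{f(C)}$ and $f(C)$ has eigenvalues $f(\sigma_j)$. By the generalized Ky Fan result quoted from Corollary 3.5.1 of \cite{Horn Johnson Topics} (with $g$ increasing), it suffices to prove the inequality for every weighted Ky Fan norm $\vertii{-}_\alpha$, i.e.\ to show
\[
\sum_{j=1}^k \alpha_j f(\sigma_j) \;\le\; \Big(\sum_{j=1}^k \alpha_j\Big)\, f\!\left(\frac{\sum_{j=1}^k \alpha_j \sigma_j}{\sum_{j=1}^k \alpha_j}\right)
\]
for all admissible weights $\alpha_1 \geq \cdots \geq \alpha_n \geq 0$; here $\vertii{I}_\alpha = \sum_j \alpha_j$ and $\vertii{C}_\alpha = \sum_j \alpha_j \sigma_j$. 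Actually it is cleanest to establish the case $n=k$ with \emph{all} weights positive and then note the general $\vertii{-}_\alpha$ case follows by the same computation (zero weights just drop terms).

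The key point is concavity: a non-negative operator monotone function on $[0,\infty)$ is operator concave, hence in particular concave as a scalar function, and $f(0) \geq 0$. Given this, the displayed inequality is exactly Jensen's inequality for the concave function $f$ applied to the probability weights $p_j = \alpha_j / \sum_i \alpha_i$ and the points $\sigma_j$:
\[
\sum_{j=1}^k p_j\, f(\sigma_j) \;\le\; f\!\Big(\sum_{j=1}^k p_j \sigma_j\Big).
\]
Multiplying through by $\sum_i \alpha_i = \vertiii{I}$ (in the $\vertii{-}_\alpha$ incarnation) gives precisely $\vertiii{f(|A-B|)} \leq \vertiii{I} f(\vertiii{A-B}/\vertiii{I})$, and combining with Theorem~\ref{Ando} finishes the proof. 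One should remark that for the operator norm ($k=1$, $\alpha_1=1$) this recovers $\vertii{f(A)-f(B)} \leq f(\vertii{A-B})$, consistent with Theorem~\ref{Kittaneh Kosaki}, and that equality is approached when $A-B$ is a multiple of the identity, matching the example with $f(x)=x^r$ given above.

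The main obstacle is essentially bookkeeping rather than depth: one must make sure the reduction to $\vertii{-}_\alpha$ norms is legitimate here, i.e.\ that the map $(t_1,\dots,t_m) \mapsto \vertiii{I} f(t_1/\vertiii{I})$ (a single variable, $m=1$) is increasing in its argument so that Corollary 3.5.1 of \cite{Horn Johnson Topics} applies; this is immediate since operator monotone $f$ is in particular monotone increasing. A secondary technical point is the treatment of $f(0)$: if $f(0) > 0$ then concavity still gives Jensen, and the stated inequality holds as written (indeed with room to spare), so no normalization $f(0)=0$ is needed, unlike in Theorem~\ref{Kittaneh Kosaki}. Thus the only genuinely non-elementary input is the classical fact that non-negative operator monotone functions on $[0,\infty)$ are concave, which we invoke rather than reprove.
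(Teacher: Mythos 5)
Your proof is correct and follows essentially the same route as the paper, which derives the corollary from Ando's Theorem~\ref{Ando} together with Lemma~\ref{Jensen} (itself proved by reducing to the weighted Ky Fan norms $\vertii{-}_\alpha$ via Corollary 3.5.1 of Horn--Johnson and then applying Jensen's inequality for the concave function $f$). One small correction to your justification of the reduction: $\vertiii{I}$ varies with the norm, so you must apply the Horn--Johnson result to the two-variable function $g(x,y)=xf(y/x)$ (increasing in each argument because $f$ is increasing, concave and non-negative), not to a one-variable map with $\vertiii{I}$ held fixed; your explicit $\vertii{-}_\alpha$ computation with $\vertii{I}_\alpha=\sum_j\alpha_j$ already does exactly the right thing.
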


by using the following lemma
\begin{lemma}\label{Jensen}
Suppose that $Y \in M_n(\C)$ and $f \geq 0$ is a concave function on $[0, \infty)$. Then for any unitarily-invariant norm $\vertiii{-}$ on $M_n(\C)$, 
\[\vertiii{f(|Y|)} \leq \vertiii{I}f\left(\frac{\vertiii{Y}}{\vertiii{I}}\right).\]
\end{lemma}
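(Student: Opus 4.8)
The plan is to reduce the inequality to the norms $\vertii{-}_\alpha=\sum_{j=1}^n\alpha_j\sigma_j(-)$ (with $\alpha_1\ge\cdots\ge\alpha_n\ge 0$ not all zero) via the generalized Ky Fan theorem, Corollary 3.5.1 of \cite{Horn Johnson Topics} quoted above, and then to dispatch each of those cases by a single application of Jensen's inequality. Before doing so I would record the elementary fact that a concave function $f\ge 0$ on $[0,\infty)$ is automatically non-decreasing: were $f(a)>f(b)$ for some $a<b$, concavity would force $f(x)\to-\infty$ as $x\to\infty$, contradicting $f\ge 0$. Hence, if $\sigma_1(Y)\ge\cdots\ge\sigma_n(Y)\ge 0$ are the singular values of $Y$ (equivalently, the eigenvalues of $|Y|$), then $f(|Y|)$ is positive semidefinite with eigenvalues $f(\sigma_1(Y))\ge\cdots\ge f(\sigma_n(Y))$, so $\sigma_j(f(|Y|))=f(\sigma_j(Y))$ for all $j$.

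Next I would apply Corollary 3.5.1 with $m=2$, $X=f(|Y|)$, $X_1=I$, $X_2=Y$, and
\[g(t_1,t_2)=t_1\,f\!\left(\frac{t_2}{t_1}\right),\qquad t_1>0,\ t_2\ge 0,\]
so that the asserted inequality is precisely $\vertiii{X}\le g(\vertiii{X_1},\vertiii{X_2})$; the singularity of $g$ at $t_1=0$ is immaterial since $\vertiii{I}>0$ (and $\vertii{I}_\alpha>0$) for every norm in play. To invoke the corollary one checks that $g\ge 0$ and that $g$ is non-decreasing in each variable. Non-negativity and monotonicity in $t_2$ (which is just $f$ being non-decreasing) are immediate; for monotonicity in $t_1$, given $0<t_1<t_1'$ one writes $\frac{t_2}{t_1'}=\frac{t_1}{t_1'}\cdot\frac{t_2}{t_1}+\left(1-\frac{t_1}{t_1'}\right)\cdot 0$ and uses concavity of $f$ together with $f(0)\ge 0$ to get $f\!\left(\frac{t_2}{t_1'}\right)\ge\frac{t_1}{t_1'}f\!\left(\frac{t_2}{t_1}\right)$, i.e.\ $g(t_1,t_2)\le g(t_1',t_2)$.

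It then remains to check the inequality for each $\vertii{-}_\alpha$, and here Jensen's inequality does everything: writing $A=\sum_{j=1}^n\alpha_j=\vertii{I}_\alpha>0$, the numbers $\alpha_j/A$ are non-negative with sum $1$, so concavity of $f$ gives
\[\vertii{f(|Y|)}_\alpha=\sum_{j=1}^n\alpha_j\,f(\sigma_j(Y))=A\sum_{j=1}^n\frac{\alpha_j}{A}f(\sigma_j(Y))\le A\,f\!\left(\sum_{j=1}^n\frac{\alpha_j}{A}\sigma_j(Y)\right)=\vertii{I}_\alpha\,f\!\left(\frac{\vertii{Y}_\alpha}{\vertii{I}_\alpha}\right),\]
which is the desired bound for $\vertii{-}_\alpha$. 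I do not expect a genuine obstacle here: the argument is essentially Jensen plus bookkeeping, and the only two points requiring a little care are verifying the monotonicity hypotheses of Corollary 3.5.1 for the perspective-type function $g$, and the observation that a non-negative concave function on the unbounded half-line must be non-decreasing, which is what legitimizes the identity $\sigma_j(f(|Y|))=f(\sigma_j(Y))$.
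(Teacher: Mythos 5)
Your proposal is correct and follows essentially the same route as the paper's proof: reduce to the norms $\vertii{-}_\alpha$ via Corollary 3.5.1 of Horn--Johnson with the perspective function $g(t_1,t_2)=t_1 f(t_2/t_1)$, then apply Jensen's inequality for concave functions. You supply some details the paper leaves implicit (the monotonicity of $g$ in each argument and the identity $\sigma_j(f(|Y|))=f(\sigma_j(Y))$), but the argument is the same.
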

\begin{proof}
Because $g(x,y) = xf(y/x) \geq 0$ on $[0,\infty)^2$ is increasing in each argument, Corollary 3.5.1 of \cite{Horn Johnson Topics} states that we need only verify
\[\vertii{f(|Y|)}_\alpha \leq g(\vertiii{I}_\alpha,\vertiii{Y}_\alpha).\]
The desired inequality then follows from Jensen's inequality for concave functions:
\[\frac{\vertii{f(|Y|)}_\alpha}{\vertii{I}_\alpha} = \frac{1}{\sum_j \alpha_j}\sum_j \alpha_j f(\sigma_j(Y)) \leq f\left(\frac{1}{\sum_j \alpha_j}\sum_j \alpha_j\sigma_j(Y)\right) =  f\left(\frac{\vertii{Y}_\alpha}{\vertii{I}_\alpha}\right).\]
\end{proof}
Note that Theorem \ref{Ando} and Corollary \ref{Ando Norms of differences of f} are identical in the case that $\vertiii{-}$ is the operator norm.
Consequently, Corollary \ref{Ando Norms of differences of f} gives a version of $\Omega_f = f-f(0)$ for any unitarily-invariant norm $\vertiii{-}$ that satisfies $\vertiii{I}= 1$.

The main subject of this paper is to what extent does Corollary \ref{Ando Norms of differences of f} extend to commutators. 
Recall the following theorem of Bhatia and Kittaneh which is a commutator version of Ando's theorem.
\begin{thm}(\cite{BK-commutators})\label{BK ineq}
Suppose that $A, B \geq 0$ and $X$ are matrices in $M_n(\C)$. Let $\vertiii{-}$ be a unitarily invariant norm on $M_n(\C)$ and $f$ be a non-negative matrix monotone function on $[0, \infty)$.
Then
\[\vertiii{f(A)X-Xf(B)} \leq \frac{1+s_1(X)^2}{2} \vertiii{f\left(\frac{2}{1+s_n(X)^2}|AX-XB|\right)}.\]
\end{thm}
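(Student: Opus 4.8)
The plan is to reduce to Ando's inequality (Theorem \ref{Ando}) together with the integral representation of operator monotone functions. A non-negative operator monotone $f$ on $[0,\infty)$ has the form $f(t)=\gamma_0+\gamma_1 t+\int_{0}^{\infty}\frac{\lambda t}{\lambda+t}\,d\mu(\lambda)$ with $\gamma_0=f(0)\ge 0$, $\gamma_1\ge 0$, and $\mu$ a non-negative measure; and by Corollary 3.5.1 of \cite{Horn Johnson Topics} it suffices to prove the stated inequality for every norm $\vertii{\cdot}_{\alpha}$. I would therefore write
\[f(A)X-Xf(B)=\gamma_1(AX-XB)+\int_{0}^{\infty}\big(f_{\lambda}(A)X-Xf_{\lambda}(B)\big)\,d\mu(\lambda),\qquad f_{\lambda}(t)=\tfrac{\lambda t}{\lambda+t},\]
and apply the triangle inequality for $\vertii{\cdot}_{\alpha}$ (the $\gamma_0$ term drops out). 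A short computation then shows that if the desired inequality holds for each building block $g\in\{t\}\cup\{f_{\lambda}:\lambda>0\}$ \emph{with the same constants} $C=\tfrac{1+s_1(X)^2}{2}$ and $c=\tfrac{2}{1+s_n(X)^2}$, it holds for $f$: expanding $\vertii{f(c|AX-XB|)}_{\alpha}=\sum_j\alpha_j f\!\big(c\,\sigma_j(AX-XB)\big)$ by the same integral representation, the contributions of the $f_\lambda$'s match term by term, the leftover $\gamma_1(AX-XB)$ term is absorbed because $Cc=\tfrac{1+s_1(X)^2}{1+s_n(X)^2}\ge 1$, and the $\gamma_0$ term only helps. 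So everything reduces to the two building-block inequalities.

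The building block $g(t)=t$ is trivial, since $\vertii{AX-XB}_\alpha\le Cc\,\vertii{AX-XB}_{\alpha}$. For $g=f_{\lambda}$, I would use the resolvent identity
\[f_{\lambda}(A)X-Xf_{\lambda}(B)=\lambda^{2}(\lambda+A)^{-1}(AX-XB)(\lambda+B)^{-1},\]
which follows from $f_{\lambda}(t)=\lambda-\lambda^{2}(\lambda+t)^{-1}$ and $(\lambda+A)^{-1}X-X(\lambda+B)^{-1}=-(\lambda+A)^{-1}(AX-XB)(\lambda+B)^{-1}$. One then has to bound $\vertii{\lambda^{2}(\lambda+A)^{-1}(AX-XB)(\lambda+B)^{-1}}_{\alpha}$ by $C\,\vertii{f_{\lambda}(c|AX-XB|)}_{\alpha}$. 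I expect the right way is to dilate the triple $(A,B,X)$ so that the product of two resolvents becomes a single unitary conjugation; Theorem \ref{Ando} then applies directly, exactly as in the computation \eqref{unitary conjugation}, and the replacement of $|AX-XB|$ by $\tfrac{2}{1+s_n(X)^2}|AX-XB|$ and the front factor $\tfrac{1+s_1(X)^2}{2}$ are produced by the norms of the conjugating factors, estimated via the arithmetic–geometric-mean inequality $2s\le 1+s^{2}$ applied to the singular values of $X$.

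The main obstacle is precisely this last step: controlling $\vertii{\lambda^{2}(\lambda+A)^{-1}(AX-XB)(\lambda+B)^{-1}}_{\alpha}$, equivalently choosing the dilation so that the commutator-type error terms that appear when $A\oplus B$ is conjugated by the dilation of $X$ either vanish or get absorbed, and so that the constants come out exactly as stated. That the shape of the answer is forced by arithmetic–geometric mean can be checked on the special case $A=tI$, $B=0$, where the inequality reduces term by term to $2\,\sigma_j(X)\le 1+s_1(X)^2$, and on $X$ unitary, where $s_1(X)=s_n(X)=1$ and the statement becomes exactly Ando's inequality after the conjugation trick \eqref{unitary conjugation}. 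Once the $f_\lambda$ building block is in place, the assembly above, together with the subadditivity and monotonicity of $f$, finishes the proof; a standard limiting/perturbation argument handles the degenerate cases (notably $X$ non-invertible).
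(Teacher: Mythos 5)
Your reduction to the building blocks $f_\lambda$ via the integral representation and the $\vertii{\cdot}_\alpha$ norms is internally consistent, but it is superfluous and, more importantly, it stops exactly where the proof has to begin: the step you yourself flag as ``the main obstacle'' --- bounding $\vertii{\lambda^2(\lambda+A)^{-1}(AX-XB)(\lambda+B)^{-1}}_\alpha$ by $C\,\vertii{f_\lambda(c\,|AX-XB|)}_\alpha$ --- is the entire content of the theorem, and neither the resolvent identity nor an unspecified ``dilation'' supplies it. There is no dilation in Bhatia--Kittaneh's argument and no decomposition of $f$ either; a general operator monotone $f$ is handled in one stroke. After the doubling reduction (\ref{doubling}) to $A=B$ and $X=X^\ast$, one takes $U$ to be the Cayley transform of $X$, say $U=(X-iI)(X+iI)^{-1}$, so that $I-U=2i(X+iI)^{-1}$. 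The two identities
\[
[f(A),X]=\tfrac{1}{2i}\,(X+iI)\,[f(A),U]\,(X+iI),
\qquad
[A,U]=2i\,(X+iI)^{-1}[A,X](X+iI)^{-1}
\]
then do all the work: the first gives $\vertiii{[f(A),X]}\le\tfrac12\vertii{X+iI}^2\,\vertiii{[f(A),U]}=\tfrac{1+s_1(X)^2}{2}\vertiii{[f(A),U]}$; the unitary case $\vertiii{[f(A),U]}\le\vertiii{f(|[A,U]|)}$ is Theorem \ref{Ando} via the conjugation computation (\ref{unitary conjugation}); and the second identity gives $\sigma_j([A,U])\le 2\vertii{(X+iI)^{-1}}^2\sigma_j([A,X])=\tfrac{2}{1+s_n(X)^2}\sigma_j([A,X])$, which passes through the increasing function $f$ singular value by singular value to yield $\vertiii{f(|[A,U]|)}\le\vertiii{f\left(\tfrac{2}{1+s_n(X)^2}|[A,X]|\right)}$. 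The constants are the operator norms of $X+iI$ and of its inverse, not an arithmetic--geometric-mean estimate on the singular values of $X$; and since $X+iI$ is invertible for every self-adjoint $X$, no limiting argument for non-invertible $X$ is needed.

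A secondary point on why your route cannot be completed as stated: the only estimate the resolvent identity hands you directly is $\vertii{\lambda^2(\lambda+A)^{-1}(AX-XB)(\lambda+B)^{-1}}_\alpha\le\vertii{AX-XB}_\alpha$, which is linear in $AX-XB$, whereas the target $C\,\vertii{f_\lambda(c\,|AX-XB|)}_\alpha$ saturates at roughly $\lambda$ per singular value and is therefore strictly weaker for large commutators. So the sandwiched form must genuinely be traded for a similarity by a single normal operator before Ando's theorem can be invoked, and that trade is precisely the Cayley-transform step above.
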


As a corollary, by scaling $X$, one obtains
\begin{thm} (\cite{BK-commutators})\label{BK ineq 5/4}
Suppose that $A, B \geq 0$ and $X$ are matrices in $M_n(\C)$ and $\|X\| \leq 1$. Let $\vertiii{-}$ be a unitarily invariant norm on $M_n(\C)$ and $f$ be a non-negative matrix monotone function on $[0, \infty)$.
Then
\begin{equation}\label{BK Ineq}
\vertiii{f(A)X-Xf(B)} \leq \frac54\vertiii{f(|AX-XB|)}.
\end{equation}
\end{thm}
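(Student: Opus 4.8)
The plan is to deduce Theorem~\ref{BK ineq 5/4} directly from Theorem~\ref{BK ineq} by applying the latter not to $X$ itself but to the rescaled matrix $\tfrac12 X$, and then to exploit the homogeneity of both the commutator $A\,(\cdot)-(\cdot)\,B$ and the norm $\vertiii{-}$, together with the matrix monotonicity of $f$. The point is that replacing $X$ by $\tfrac12 X$ divides $s_1(X)$ by $2$ (which, since $\|X\|\le 1$, makes the leading scalar coefficient at most $\tfrac58$), while the potentially large factor $\tfrac{2}{1+s_n(\cdot)^2}$ gets absorbed against the same factor $\tfrac12$ inside $f$.

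First I would record the scaling identities: for any scalar $t$ one has $A(tX)-(tX)B = t\,(AX-XB)$, hence $\bigl|A(tX)-(tX)B\bigr| = |t|\,|AX-XB|$, and $s_j(tX)=|t|\,s_j(X)$ for every singular value. Applying Theorem~\ref{BK ineq} with $X$ replaced by $\tfrac12 X$ then gives
\[
\tfrac12\,\vertiii{f(A)X - Xf(B)} \;\leq\; \frac{1 + \tfrac14 s_1(X)^2}{2}\,\vertiii{\,f\!\left(\frac{2}{1+\tfrac14 s_n(X)^2}\cdot\tfrac12\,|AX-XB|\right)}.
\]
Next I would simplify the two coefficients. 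Since $\|X\|=s_1(X)\le 1$, the scalar prefactor satisfies $\tfrac12\bigl(1+\tfrac14 s_1(X)^2\bigr)\le \tfrac12\cdot\tfrac54 = \tfrac58$. For the argument of $f$, note that $\dfrac{2}{1+\tfrac14 s_n(X)^2}\cdot\dfrac12 = \dfrac{1}{1+\tfrac14 s_n(X)^2}=:c \in (0,1]$, so the positive matrix $c\,|AX-XB|$ is $\le |AX-XB|$ in the operator order; as $f$ is matrix monotone on $[0,\infty)$ this yields $f(c\,|AX-XB|)\le f(|AX-XB|)$, and unitary invariance of $\vertiii{-}$ then gives $\vertiii{f(c\,|AX-XB|)}\le \vertiii{f(|AX-XB|)}$. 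Combining the displays, $\tfrac12\vertiii{f(A)X-Xf(B)} \le \tfrac58\vertiii{f(|AX-XB|)}$, which is \eqref{BK Ineq}.

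There is essentially no serious obstacle here; the only point requiring a little care is the handling of the $s_n(X)$ term, whose coefficient $\tfrac{2}{1+s_n(\tfrac12 X)^2}$ can be as large as $2$ but is tamed once the factor $\tfrac12$ from the rescaling is absorbed into it. It is also worth remarking why $t=\tfrac12$ is the right choice: repeating the computation with a general $t\in(0,1]$ produces the constant $\tfrac{1}{2t}\bigl(1+t^2\|X\|^2\bigr)\,\max\!\bigl(1,\tfrac{2t}{1+t^2 s_n(X)^2}\bigr)$; for $t\le\tfrac12$ the maximum equals $1$ and $t\mapsto \tfrac{1}{2t}(1+t^2\|X\|^2)$ is non-increasing on $(0,1]$ when $\|X\|\le 1$, whereas for $t\ge\tfrac12$ the constant is at least $1+t^2\|X\|^2 \ge 1+\tfrac14\|X\|^2$, so $t=\tfrac12$ minimizes the bound and produces the constant $\tfrac54$.
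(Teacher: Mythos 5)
Your proposal is correct and is exactly the paper's argument: the paper obtains Theorem \ref{BK ineq 5/4} from Theorem \ref{BK ineq} "by scaling $X$," and its Section \ref{Reduction to X unitary} spells this out as applying the inequality to $tX$, discarding the $s_n$-term inside $f$ via operator monotonicity (and monotonicity of unitarily invariant norms on positive matrices), and taking $t=1/2$ so that the inner factor becomes $1$ and the outer factor becomes $\frac{1+\frac14\|X\|^2}{2\cdot\frac12}\le\frac54$. Your closing remark on the optimality of $t=\tfrac12$ is a nice addition but not needed for the statement.
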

\begin{remark}\label{ineq relationship}
Note that by Lemma \ref{Jensen}, (\ref{BK Ineq}) implies
\[\vertiii{f(A)X-Xf(B)} \leq \frac54\vertiii{I}f\left(\frac{\vertiii{AX-XB}}{\vertiii{I}}\right).\]
Because $x\mapsto xf(c/x)$ is monotonically increasing and
\[\vertiii{X} = \vertiii{XI} \leq \vertii{X}\vertiii{I} \leq \vertiii{I},\] 
we see that the inequality above is weaker than
\begin{equation}\label{using Jensen}
\vertiii{f(A)X-Xf(B)} \leq \frac54\vertiii{X}f\left(\frac{\vertiii{AX-XB}}{\vertiii{X}}\right),
\end{equation}
where requirement that $\vertii{X} \leq 1$ can be removed because (\ref{using Jensen}) is scaling invariant. 
\end{remark}

The proof of Theorem \ref{BK ineq} makes use of several reductions. The first is the standard reduction using block matrices:
\begin{equation}\label{doubling}
\tilde A = \bp A & 0 \\ 0 & B\ep \mbox{ and } \tilde X = \bp 0 & X \\ X^\ast & 0 \ep
\end{equation}
to the case that $A = B$ and $X$ is self-adjoint. So, the inequalities are involving the norm of $AX-XA = [A,X]$ and the norm of $f(|[A,X]|)$.
The second reduction is to the case that $X$ is unitary using the Cayley transform. The case that $X$ is unitary is straight-forward and has constant $C = 1$ instead of $\frac54$ as discussed in Section \ref{Reduction to X unitary} whose calculation is essentially that of (\ref{unitary conjugation}). 

The framework of this proof of Theorem \ref{BK ineq} can already be found in Pedersen's short paper (\cite{Pedersen Commutator Ineq}) on the inequality
\begin{align}\label{power ineq}
\|[A^r,X]\| \leq \gamma_0(r)\|X\|^{1-r}\|[A,X]\|^r
\end{align}
for $0 < r < 1$. We let $\gamma_0(r) \geq 1$ be the optimal constant for which (\ref{power ineq}) holds. 

The relevant conjecture for generalizing Ando's theorem to commutators is:
\begin{conj}\label{Conjecture 0}
Suppose that $A, B, X \in B(\H)$. We require that $A, B \geq 0$ be compact with finite $\vertiii{-}$ norm and $\|X\| \leq 1$. 
Let $f$ be a non-negative matrix monotone function on $[0, \infty)$. Then
\begin{equation}\label{C0Ineq}
\vertiii{f(A)X-Xf(B)} \leq C\vertiii{f(|AX-XB|)}
\end{equation} holds for $C=1$.
If $\vertiii{-}$ is the operator norm, then $A, B$ are not required to be compact.
\end{conj}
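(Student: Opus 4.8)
Since Conjecture~\ref{Conjecture 0} is open as stated, what follows is a program for the case $C=1$ together with an identification of where the difficulty is concentrated, rather than a finished proof. Replacing $f$ by $f-f(0)$ (again non-negative and operator monotone, and this only decreases the right-hand side since $f(|AX-XB|)\geq(f-f(0))(|AX-XB|)$) we may assume $f(0)=0$. The first genuine step is the standard doubling (\ref{doubling}): with $\tilde A=A\oplus B\geq 0$ and $\tilde X$ the off-diagonal self-adjoint block matrix with corner $X$ (so $\vertii{\tilde X}=\vertii{X}\leq 1$), one computes that $\tilde A\tilde X-\tilde X\tilde A$ is unitarily equivalent to $\bp 0 & AX-XB\\ -(AX-XB)^\ast & 0\ep$ and that $f(\tilde A)\tilde X-\tilde X f(\tilde A)$ is unitarily equivalent to $\bp 0 & f(A)X-Xf(B)\\ -(f(A)X-Xf(B))^\ast & 0\ep$; the singular value sequences of these block matrices are those of $AX-XB$ and of $f(A)X-Xf(B)$ each repeated twice, so by Ky Fan's theorem proving (\ref{C0Ineq}) with $C=1$ for all unitarily-invariant norms is equivalent to proving
\[\vertii{f(A)X-Xf(A)}_{(k)}\leq\vertii{f(|AX-XA|)}_{(k)}\qquad\text{for all }k\geq 1,\ A\geq 0,\ X=X^\ast,\ \vertii{X}\leq 1.\]
(One may instead pass to the $\vertii{-}_\alpha$ norms via Corollary~3.5.1 of \cite{Horn Johnson Topics}; the content is the same.) Diagonalising $A$, this asks for Ky Fan bounds on the Schur (Hadamard) product $M\circ[A,X]$, where $M=\bigl[\tfrac{f(a_i)-f(a_j)}{a_i-a_j}\bigr]$ is the Loewner matrix of $f$ at the eigenvalues of $A$, positive semidefinite precisely because $f$ is operator monotone.

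The model case that already gives the optimal constant $1$ is $X=U$ unitary: then $f(A)U-Uf(B)=U\bigl(f(U^\ast AU)-f(B)\bigr)$, so by Ando's Theorem~\ref{Ando} applied to the positive matrices $U^\ast AU$ and $B$,
\[\vertiii{f(A)U-Uf(B)}=\vertiii{f(U^\ast AU)-f(B)}\leq\vertiii{f(|U^\ast AU-B|)}=\vertiii{f(|AU-UB|)}.\]
The program would be complete if, for a self-adjoint contraction $X$, one could replace $X$ by a unitary without increasing the relevant norms. The obvious device is $U_\pm=X\pm i(I-X^2)^{1/2}$, for which $X=\tfrac12(U_++U_-)$; the triangle inequality then reduces matters to $f(|AU_\pm-U_\pm A|)$ and $AU_\pm-U_\pm A=[A,X]\pm i\,[A,(I-X^2)^{1/2}]$. \emph{This is exactly where every known argument loses a constant.} The defect function $t\mapsto(1-t^2)^{1/2}$ has infinite derivative at the endpoints $\pm1$ of the possible spectrum of $X$: taking $X=I-\varepsilon P$ for a projection $P$ gives $(I-X^2)^{1/2}=\sqrt{2\varepsilon-\varepsilon^2}\,P$, so $\vertiii{[A,(I-X^2)^{1/2}]}/\vertiii{[A,X]}=\sqrt{2\varepsilon-\varepsilon^2}/\varepsilon\to\infty$ as $\varepsilon\to 0$, and no fixed multiple of $\vertiii{[A,X]}$ dominates $\vertiii{[A,(I-X^2)^{1/2}]}$. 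The block-unitary estimate (\ref{block unitary}) and the Cayley-transform reduction of \cite{BK-commutators} are manifestations of the same loss, responsible for the constants $1.72$ and $\tfrac54$; this is why reaching $C=1$ appears to require a new idea rather than a sharper choice of parameter.

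Two routes seem plausible for bypassing the contraction-to-unitary reduction. The first is to work with the Loewner/Schur-multiplier picture directly. Since $f(x)=\beta x+\int_0^\infty\tfrac{\lambda x}{\lambda+x}\,d\mu(\lambda)$ one has $\tfrac{f(a)-f(b)}{a-b}=\beta+\int_0^\infty\tfrac{\lambda^2}{(\lambda+a)(\lambda+b)}\,d\mu(\lambda)$, hence $[f(A),X]=\beta[A,X]+\int_0^\infty\lambda^2(A+\lambda)^{-1}[A,X](A+\lambda)^{-1}\,d\mu(\lambda)$; the naive bound $\vertiii{(A+\lambda)^{-1}[A,X](A+\lambda)^{-1}}\leq\lambda^{-2}\vertiii{[A,X]}$ only yields an estimate of the form $\vertiii{[f(A),X]}\leq\bigl(\sup f'\bigr)\vertiii{[A,X]}$, which is worthless when $f'(0)=\infty$ (e.g. $f(x)=x^{1/2}$) and far from sharp otherwise, so the real task is to reassemble these conjugated copies of $[A,X]$ into $f(|[A,X]|)$ — the function-approximation problem alluded to in the abstract — rather than estimating them one $\lambda$ at a time. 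The second is to exploit that an operator monotone $f$ on $[0,\infty)$ is simultaneously operator concave: one might hope that an operator Jensen inequality in the spirit of Lemma~\ref{Jensen}, run over a family of unitary dilations of $X$ rather than the single two-point average $\tfrac12(U_++U_-)$, can absorb the endpoint defect isolated above. Making either route precise — equivalently, identifying the extremal configuration $(A,X)$ and showing the constant there is exactly $1$ — is the main obstacle, and at present the Hilbert--Schmidt norm, where the pointwise subadditivity $|f(a)-f(b)|\leq f(|a-b|)$ of a concave $f$ with $f(0)=0$ can be used entry by entry, is the only unitarily-invariant norm for which $C=1$ is known.
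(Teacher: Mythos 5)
You were asked about Conjecture \ref{Conjecture 0}, and the first thing to say is that the paper does not prove it: it is the open problem motivating the paper. The constants actually established are $C=\frac54$ for general unitarily-invariant norms (Theorem \ref{BK ineq 5/4}, Bhatia--Kittaneh), improved in Section \ref{Reduction to X unitary} to $C=\csc(1)<1.1884$ for the operator norm, while the main theorem (Theorem \ref{Main Thm}) gives $C<1.01975$ only for the Conjecture \ref{Conjecture 1} form (\ref{CIneqMod0}), in which $f$ sits outside the norm; for the operator norm the two forms coincide, but for general unitarily-invariant norms (\ref{C0Ineq}) with a constant below $\frac54$ is not obtained anywhere in the paper. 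So there is no proof to compare yours against, and your submission is appropriately honest in offering a program rather than a proof. What you do present is accurate and consistent with the paper's own reductions: the doubling (\ref{doubling}) to $A=B$, $X=X^\ast$, together with Ky Fan dominance; the exact constant $1$ in the unitary case via Ando's theorem, which is precisely (\ref{U ineq}); and the diagnosis that the loss occurs in passing from a self-adjoint contraction to unitaries. The remark following Theorem \ref{sin(t) estimate} gives a sharper structural version of your obstruction: any argument that routes $X$ through a unitary or a block-unitary dilation simultaneously establishes a bound of the form $\Omega^\flat_f \leq Const.\,\Omega_f$ for arbitrary continuous $f$, and since the optimal such constant is $\csc(1)$ by (\ref{best operator and commutator ratio}), no argument of that shape can reach $C=1$; your endpoint example $X=I-\varepsilon P$ is a concrete symptom of this, but the modulus-of-continuity argument is the stronger obstruction.

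Two inaccuracies to repair if you keep this as a discussion. First, your closing claim that $C=1$ is known for the Hilbert--Schmidt norm conflates the two conjectures: the Hilbert--Schmidt results of Bhatia--Kittaneh and Joci\'c cited in the paper are for the form (\ref{CIneqMod0}) with $f$ applied to $\vertiii{AX-XB}$ outside the norm, not for (\ref{C0Ineq}) with $\vertiii{f(|AX-XB|)}$, and your parenthetical ``entry by entry'' use of $|f(a)-f(b)|\leq f(|a-b|)$ does not by itself yield either form (Joci\'c's proof uses double operator integrals; the paper's Section \ref{False Positives?} gives a different short proof of the Conjecture \ref{Conjecture 1} version for such norms). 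Second, your Loewner/integral-representation route, as you yourself note, only produces $(\sup f')\,\vertiii{[A,X]}$ when the conjugated copies of $[A,X]$ are estimated one $\lambda$ at a time; the paper's actual progress (Sections \ref{Reductions}--\ref{Proof of Main Theorem}) comes from splitting $f=(f-g)+g$ with $g'\in\wM_+$ and minimizing the functional $E_f(c)$, which you allude to but do not develop, and even that machinery currently stops at $C<1.01975$ for (\ref{CIneqMod}) rather than $1$ and says nothing new about (\ref{C0Ineq}) for norms other than the operator norm. In short: no step you wrote is wrong, but the statement remains a conjecture, and your text should be read as a correct situating of the problem rather than a proof.
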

Because $f(x) = x^r$ for $r \in (0,1)$ is operator monotone, whatever the optimal value of $C$ is, we will have $\gamma_0(r) \leq C$. Because \[\delta^r = \omega_{x^r}(\delta)= \Omega_{x^r}(\delta) \leq \Omega^\flat_{x^r}(\delta) \leq \gamma_0(r)\delta^r,\]
the conjecture of $\gamma_0(r) = 1$ implies that $\Omega_{x^r}^\flat$ is the function $x^r$. In fact, by scaling $A$, we see that $\Omega^\flat_{x^r}(\delta) = \gamma_0(r)\delta^r$.

\vspace{0.1in}

We now discuss the obtained values for the constant $\gamma_0$ in the literature. 
Olsen and Pedersen in
\cite{Pedersen Corona Construction} give a Taylor series argument to show that $\gamma_0(r) \leq (1-r)^{r-1}$. This gives $\gamma_0(1/2) \leq \sqrt{2}\approx 1.41$. They state that Davidson had previously obtained the same result with a different method.
They also make reference to numerical simulations suggesting the $\gamma_0(1/2) = 1$ is optimal. Pedersen in \cite{Pedersen Commutator Ineq} states that he ``firmly believes'' that this holds ``as in'' the case where $X$ is unitary. 
However, by our discussion of the relationship between $\Omega_f$ and $\Omega^\flat_f$ in Section \ref{Reduction to X unitary}, we should exercise some caution about these values above $\csc(1) \approx 1.1884$. 

Boyadzhiev in \cite{Boyadzhiev} showed that $\gamma_0(r) \leq \frac{\sin(\pi r)}{\pi r(1-r)}$, which gives $\gamma_0(1/2) \leq 4/\pi\approx 1.2732$.
Using the method we mentioned above in the proof of Theorem \ref{BK ineq 5/4}, Pedersen, in the article \cite{Pedersen Commutator Ineq} we mentioned above, showed that $\gamma_0(r) \leq 5/4=1.25$ and by optimizing the scaling of $X$ for each $r$, he also obtained
\begin{equation}\label{P gamma exp}
\gamma_0(r) \leq 2^r(1-r)^{-\frac{1}{2}(1-r)}(1+r)^{-\frac12(1+r)}.
\end{equation}
This estimate gives $\gamma_0(1/2) \leq 2^{\frac{3}{2}}3^{-\frac{3}{4}} < 1.2409$. This is the current smallest upper bound for $\gamma_0(1/2)$ without any further restriction on $A$ or $X$. Later Jocić, Lazarević, and Milo\v{s}ević (Theorem 3.4, \cite{Jocic Ineq derivations monotone}) extended (\ref{BK Ineq}) for $f(x) = x^r$, $r \in (0,1)$ with the constant in (\ref{P gamma exp}) to more general unitarily-invariant norms.

In \cite{Pedersen Corona Construction}, Pedersen provided a more complicated Taylor series argument to show that $\gamma_0(r) \leq m^{r}\prod_{k=2}^{m-1}\left(1-\frac{r}{k}\right)$ when $\|[A,X]\| \leq \frac 1m$ where $\vertii{X}=1, A \geq 0, \vertii{A}=1$. Pedersen gives a lower bound for this estimate as $m \to \infty$ as $((1-r)\Gamma(1-r))^{-1}$, where $\Gamma(s)$ is the Gamma function. 
The limit of the bounds for $r = 1/2$ is provided to be $\frac{2}{\sqrt{\pi}} < 1.1285$. 

As can be seen by graphing the finite product, for $\|[A,X]\| \leq 0.01$, this estimate provides $\gamma_0(r) < 1.135$ for all $r \in (0,1)$ but the estimate does not improve much more as $\|[A,X]\|\to 0$. 
As discussed below, Loring and Vides in \cite{Loring Vides} proved $\gamma_0(1/2)=1$ holds when $\|A\| = 1$ and $\|[A,X]\| \geq 1/4$.

\vspace{0.1in}

The following related conjecture will be the main focus of this paper:  
\begin{conj}\label{Conjecture 1}
Suppose that $A, B, X \in B(\H)$. We require that $A, B\geq 0$ and $X$ be compact with finite $\vertiii{-}$ norm. 
Let $f$ be a non-negative matrix monotone function on $[0, \infty)$. Then
\begin{equation}\label{CIneqMod0}
\vertiii{f(A)X-Xf(B)} \leq C\vertiii{X}f\left(\frac{\vertiii{AX-XB}}{\vertiii{X}}\right)
\end{equation} holds for $C=1$. If $\vertiii{-}$ is the operator norm, then $X$ is not required to be compact.
\end{conj}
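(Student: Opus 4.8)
The plan is to run (\ref{CIneqMod0}) through the reduction scheme behind Theorems \ref{BK ineq}--\ref{BK ineq 5/4} and then to aim for the sharp scalar constant $1$.

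\textbf{Reductions.} Since (\ref{CIneqMod0}) is invariant under $X\mapsto\lambda X$, normalize $\vertii X=1$, so that $\vertiii X\le\vertiii I$. In the standard block substitution (\ref{doubling}) the matrix $\tilde X$ is self-adjoint with $\vertii{\tilde X}=\vertii X$, and both $f(\tilde A)\tilde X-\tilde X f(\tilde A)$ and $\tilde A\tilde X-\tilde X\tilde A$ are block-antidiagonal with $\pm$-paired off-diagonal entries $f(A)X-Xf(B)$ and $AX-XB$; as a unitarily invariant norm of such a block matrix depends only on the doubled singular values of its off-diagonal entry, it suffices to prove (\ref{CIneqMod0}) when $B=A\ge0$ and $X=X^{\ast}$ with $\vertii X\le1$. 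Next, dilate with a parameter $\tau\in(0,1]$: on $\H\oplus\H$ set $\mathcal A=A\oplus A$ and let $\mathcal U_\tau$ be the self-adjoint unitary with diagonal blocks $\pm\tau X$ and off-diagonal blocks $(I-\tau^{2}X^{2})^{1/2}$. The $(1,1)$ block of $[f(\mathcal A),\mathcal U_\tau]$ is $\tau[f(A),X]$, while $\mathcal U_\tau$ merely conjugates $\mathcal A$; so the \emph{unitary case} of the inequality --- which follows from Corollary \ref{Ando Norms of differences of f} exactly as in the computation (\ref{unitary conjugation}), and is \emph{sharp} with constant $1$, at least when $f(0)=0$ --- yields
\[
\vertiii{[f(A),X]}\ \le\ \tau^{-1}\,\vertiii{[f(\mathcal A),\mathcal U_\tau]}\ \le\ \tau^{-1}\,\vertiii{\mathcal U_\tau}\, f\!\left(\frac{\vertiii{[\mathcal A,\mathcal U_\tau]}}{\vertiii{\mathcal U_\tau}}\right),
\]
once the routine identification of the norms on the doubled space is carried out (as in the proof of Theorem \ref{BK ineq}, using Lemma \ref{Jensen}).

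\textbf{The scalar problem.} Everything now reduces to estimating $\vertiii{[\mathcal A,\mathcal U_\tau]}$ and $\vertiii{\mathcal U_\tau}$ against $\vertiii{[A,X]}$ and $\vertiii X$. The commutator $[\mathcal A,\mathcal U_\tau]$ is assembled from $\tau[A,X]$ and from $[A,(I-\tau^{2}X^{2})^{1/2}]$, a commutator with the scalar function $t\mapsto(1-\tau^{2}t^{2})^{1/2}$ of $X$; the whole loss sits in this second, ``off-diagonal'' term, which is exactly what forces $C>1$ in all prior work. Bounding it crudely by $\tfrac{\tau^{2}}{\sqrt{1-\tau^{2}}}\,\vertiii{[A,X]}$, substituting, and optimizing over $\tau$ yields only $C\approx 1.72$ (cf.\ (\ref{block unitary})), because this discards the cancellation between the two blocks of $[\mathcal A,\mathcal U_\tau]$ and exploits concavity of $f$ only through $f(\kappa\delta)\le\kappa f(\delta)$. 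The idea of this paper is instead to keep the full $2\times2$ block structure, diagonalize $A$, and recast the inequality as an honest function-approximation problem on $[0,1]$: within an admissible family of objects parametrized by the dilation data one seeks one that simultaneously dominates the diagonal and the off-diagonal contributions, and the optimal $C$ in (\ref{CIneqMod0}) equals the optimal constant of that approximation problem. Near-optimal (but not optimal) admissible choices already push $C$ below $1.02$ in general, and below $1.009$ when $f(t)=t^{1/2}$.

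\textbf{Main obstacle.} The genuine difficulty, and the content of Conjecture \ref{Conjecture 1}, is to produce an \emph{exact} minimizer of this scalar problem --- a dilation/approximation scheme whose loss vanishes identically, rather than merely a sequence of ever-sharper approximants. Equivalently, one must show that equality in (\ref{CIneqMod0}) can only be approached through the unitary, reflection-type configurations for which (\ref{unitary conjugation}) is already an equality (when $f(0)=0$), so that nothing is conceded in passing to the unitary dilation. The body of the paper carries out the reductions above and supplies the sharpest approximants currently available; closing the remaining gap to $C=1$ is what the conjecture leaves open.
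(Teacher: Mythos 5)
There is a genuine gap: the statement you were asked to prove is Conjecture \ref{Conjecture 1}, which the paper itself leaves open, and your text does not prove it either --- your final paragraph concedes that ``closing the remaining gap to $C=1$ is what the conjecture leaves open.'' What you have written is a description of a reduction strategy plus the admission that the crucial step (an exact minimizer, i.e.\ a scheme with no loss) is missing, so as a proof of (\ref{CIneqMod0}) with $C=1$ it does not get off the ground. Worse, the specific route you sketch --- dilating the self-adjoint $X$ to a unitary $\mathcal U_\tau$ and invoking the unitary case, where the constant is $1$ --- is exactly the approach the paper argues cannot reach $C=1$: as explained in Section \ref{Reduction to X unitary} (Theorem \ref{sin(t) estimate} and the remark following it), any proof that passes from $X$ to a unitary, or embeds $X$ in a block of a unitary as in (\ref{block unitary}), simultaneously proves an inequality $\Omega^\flat_f \leq \mathrm{Const}\cdot\Omega_f$ for \emph{arbitrary} continuous $f$, and by (\ref{best operator and commutator ratio}) the optimal constant there is $\csc(1)\approx 1.1884>1$ (witnessed by $f(x)=\sin x$). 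So the ``off-diagonal loss'' you hope to cancel by keeping the full $2\times 2$ block structure cannot be made to vanish by any choice of $\tau$ or any refinement of that dilation alone; you would need a mechanism that genuinely uses operator monotonicity beyond the conjugation identity (\ref{unitary conjugation}), and you do not supply one.

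Separately, your account of how the sub-$1.02$ constants arise misattributes the method. The paper does not obtain Theorem \ref{Main Thm} from the dilation picture; it first reduces, via the integral representation (\ref{integral representation}) and Lemmas \ref{f_t lemma}--\ref{f_1 lemma}, to the single function $f_1(x)=x/(x+1)$, and then splits $f_1=(f_1-g)+g$ where $g$ is chosen with $g'\in\wM_+$ so that Lemma \ref{FT lemma} gives $\vertiii{g}_{\CL}\leq g'(0)$; this yields the functional $E_{f}(c)$ of (\ref{E g}), whose near-minimizers (Gaussian $g'$) produce $C<1.01975$. Note also that this machinery only gives the implication $E_f(c)\leq Cf(c)\Rightarrow(\ref{CIneqMod})$ --- an upper bound --- so your claim that the optimal $C$ in (\ref{CIneqMod0}) \emph{equals} the optimal constant of an approximation problem is unsupported, and the corresponding Conjecture \ref{Conjecture 2} is likewise open.
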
 
Note that the inequality (\ref{CIneqMod0}) can be restated for $\vertiii{X} \leq 1$ as
\begin{equation}\label{CIneqMod}
\vertiii{f(A)X-Xf(B)} \leq Cf\left(\vertiii{AX-XB}\right).
\end{equation}
This is because all non-negative operator monotone functions $f$ are concave so it is the case that if $0<x \leq 1$ and $c > 0$ then $xf(c/x) \leq f(c)$. Consequently, Conjecture \ref{Conjecture 0} and Conjecture \ref{Conjecture 1} are identical when only considering $\vertiii{-}$ being the operator norm.

The key distinctions between these two conjectures are whether $X$ is taken in the $\vertii{-}$ or $\vertiii{-}$ norms and whether $f$ is inside or outside the norm on the right-hand side of the equation.
Based on Ando's theorem implying Corollary \ref{Ando Norms of differences of f}, nuanced by Remark \ref{ineq relationship}, it is reasonable to expect Conjecture \ref{Conjecture 0} to be harder (and perhaps even less true) than Conjecture \ref{Conjecture 1}. Conjecture \ref{Conjecture 1} will be the main subject of our study in this paper.

\vspace{0.1in}

Concerning known partial results for Conjecture \ref{Conjecture 1}, Bhatia and Kittaneh (Proposition 6, \cite{BK-commutators}) proved the inequality (\ref{CIneqMod0}) rather directly for $C = 1$ for $f(x) = x^r$ for $r \in (0, 1)$ for $\vertiii{-}$ being the Hilbert-Schmidt norm. 
Later Jocić (Theorem 3.3, \cite{Jocic}) proved (\ref{CIneqMod0}) with constant $1$ for any non-negative operator monotone function on $[0, \infty)$ for the Hilbert-Schmidt norm and $\H$ separable using double operator integrals. 

\vspace{0.1in}

In Section \ref{OMF}, we review some of the basic facts about non-negative operator monotone functions on $[0, \infty)$.

In Section \ref{Commutator Bounds}, we review some useful commutator estimates in the literature and reprove some commutator Lipschitz estimates for unitarily-invariant norms.

In Section \ref{Reduction to X unitary}, we use a different transformation of $X$ into a unitary operator to show that $\Omega^\flat_f \leq \csc(1)\Omega_f$ for an arbitrary continuous function and that this is the optimal constant in this generality. 
This provides an improvement of $C=1.25$ in (\ref{C0Ineq}) to $C = \csc(1) < 1.1884$ in the case of the operator norm.
We also obtain a smaller value of $\gamma_0(r)$ in (\ref{power ineq}). We discuss why this approach to resolving either Conjecture \ref{Conjecture 0} or Conjecture \ref{Conjecture 1} faces difficulties. Hence, for the reader looking toward the main theorem, this section may be skipped. The estimates gotten in this section are also weaker than that of the main theorem.

In Section \ref{Reductions}, we show how to reduce to the case of finding a constant $C$ so that (\ref{CIneqMod}) holds for only $f(x) = \frac{x}{x+1}$ using the integral representation of operator monotone functions. 
We further reduce (\ref{CIneqMod}) to estimating the value of $E_{f}(c)$ for the function approximation problem
\[E_{f}(c) = \inf_{g' \in \wM_+} \left(\sup_{x \geq 0}(f-g)(x) - \inf_{x \geq 0}(f-g)(x) +  cg'(0)\right),\]
where $\wM_+$ is the space of all functions that are the Fourier transform of a finite positive Borel measure on $\R$. This is an approximation of the operator Lipschitz function $f$ by a commutator Lipschitz function $g$ of a specific form whose commutator Lipschitz norm can be easily bounded.

We explain why showing that $E_{f}(c) \leq Cf(c)$ for a value of $c > 0$ implies (\ref{CIneqMod}) with this value of $C$ when $\vertiii{[A,X]}=c$ in the case that $A = B$. 
This leads to a new conjecture that implies Conjecture \ref{Conjecture 1}:
\begin{conj}\label{Conjecture 2}
Let $f$ be a non-negative function that is operator monotone on $[0, \infty)$. Then 
\[E_{f}(c) \leq Cf(c)\]
holds for all $c > 0$ when $C = 1$.
\end{conj}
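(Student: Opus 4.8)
\emph{Plan.} The approach is to reduce Conjecture \ref{Conjecture 2} to the single function $f_1(x)=\frac{x}{x+1}$ and then to construct, for each $c>0$, an almost-optimal competitor in the infimum defining $E_{f_1}(c)$. Throughout write $\operatorname{osc}(\phi)=\sup_{x\ge 0}\phi(x)-\inf_{x\ge 0}\phi(x)$, so that $E_f(c)=\inf_{g'\in\wM_+}\big(\operatorname{osc}(f-g)+c\,g'(0)\big)$.

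\textbf{Step 1 (reduction to $f_1$).} The map $f\mapsto E_f(c)$ is subadditive: summing nearly-optimal approximants $g_i$ of $f_i$ gives $(g_1+g_2)'=g_1'+g_2'\in\wM_+$ with $g'(0)$ adding and $\operatorname{osc}$ subadditive, and this extends to integrals. It is dilation-covariant: $g\mapsto g(\cdot/\lambda)$ maps $\wM_+$ to itself (push the measure forward) and scales $g'(0)$ by $\lambda^{-1}$, so $E_{f_1(\cdot/\lambda)}(c)\le E_{f_1}(c/\lambda)$. Also $E_{\beta x}(c)\le\beta c$ (take $g(x)=\beta x$), and $E_f(c)=E_{f-f(0)}(c)$ since the free additive constant of $g$ absorbs $f(0)$, with $(f-f(0))(c)\le f(c)$. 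Feeding the integral representation $f(x)=f(0)+\beta x+\int_{(0,\infty)}\frac{x}{x+\lambda}\,d\nu(\lambda)$ of a non-negative operator monotone function into these properties, the single estimate $E_{f_1}(c)\le\frac{c}{c+1}$ for all $c>0$ would yield $E_f(c)\le\beta c+\int\frac{c}{c+\lambda}\,d\nu(\lambda)=f(c)-f(0)\le f(c)$. (When $\int\lambda^{-1}\,d\nu=\infty$, as for $f=\sqrt{\cdot}\,$, first truncate $\nu$ near $0$: the discarded summand is $\le\nu\big((0,\varepsilon)\big)$ uniformly, hence has $\operatorname{osc}\le\nu\big((0,\varepsilon)\big)\to0$, and let $\varepsilon\to0$.)

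\textbf{Step 2 (sharp lower bound; the extremal regime).} For any competitor $g$, translate it so that $\inf_{x\ge0}(f_1-g)=0$; then $g\le f_1$ on $[0,\infty)$, so $g(0)\le f_1(0)=0$ and $\operatorname{osc}(f_1-g)=\sup_{x\ge0}(f_1-g)$. Since $g'=\widehat\mu\in\wM_+$ has $\|g'\|_\infty=g'(0)$, we get $g(c)\le g(0)+c\,g'(0)\le c\,g'(0)$, while $g(c)\ge f_1(c)-\operatorname{osc}(f_1-g)$; subtracting,
\[
\operatorname{osc}(f_1-g)+c\,g'(0)\ \ge\ f_1(c)=\tfrac{c}{c+1}.
\]
Thus $E_{f_1}(c)\ge f_1(c)$ with no slack: Conjecture \ref{Conjecture 2} is sharp, asserting $E_{f_1}(c)=f_1(c)$ as an infimum that is never attained. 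Combining with the cruder bound $\operatorname{osc}(f_1-g)\ge(1-\sqrt m)^2$ for $m:=g'(0)<1$ (on $[0,1/\sqrt m-1]$, $f_1$ rises by $1-\sqrt m$ and $g$ by at most $\sqrt m-m$), together with the identity $(1-\sqrt m)^2+cm-\tfrac{c}{c+1}=\tfrac{1}{c+1}\big((c+1)\sqrt m-1\big)^2$, one sees equality is possible only at $m=\tfrac1{(c+1)^2}=f_1'(c)$. Hence minimizing sequences must have $g_n'(0)\to f_1'(c)=\tfrac1{(c+1)^2}$, with $g_n$ forced to mimic the line $x\mapsto\tfrac{x}{(c+1)^2}$ on $[0,c]$ and $f_1-f_1(c)^2$ on $[c,\infty)$, and $\operatorname{osc}(f_1-g_n)\to f_1(c)^2=\big(\tfrac{c}{c+1}\big)^2$.

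\textbf{Step 3 (construction) and the main obstacle.} Write $g'=\widehat h$ with $h\ge0$ symmetric and $g(0)=0$, so that $g(x)=2\int_0^\infty h(s)\tfrac{\sin sx}{s}\,ds$, $g'(0)=\int h$, and $g(\infty)=\pi h(0^+)$; the conditions to enforce as $n\to\infty$ are $\int h_n\to\tfrac1{(c+1)^2}$, $g_n(\infty)\to1-\big(\tfrac{c}{c+1}\big)^2$, and $f_1-\big(\tfrac{c}{c+1}\big)^2-o(1)\le g_n\le f_1$ on $[0,\infty)$, with the second moment $\int s^2 h_n$ controlling how flat $g_n'$ is near $0$ and hence how tightly $g_n$ hugs the slope-$\tfrac1{(c+1)^2}$ line on $[0,c]$. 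I would optimize $h_n$ over an explicit low-dimensional family --- Gaussians, Cauchy densities, Fej\'er or de la Vall\'ee-Poussin kernels, or a point mass at $0$ plus a spread absolutely continuous part --- and evaluate $\lim_n\big(\operatorname{osc}(f_1-g_n)+c\,g_n'(0)\big)$. The difficulty is that Step 2 leaves no room: a positive-definite $g'=\widehat h$ attains its maximum $g'(0)$ only at $0$ (attaining it elsewhere forces $h$ onto a lattice, making $g$ linear-plus-periodic, useless), and the inequality $g'(0)-g'(2t)\le4\big(g'(0)-g'(t)\big)$ (from $1-\cos2\theta\le4(1-\cos\theta)$) shows that if $g_n'$ is within $\delta$ of its maximum on $[0,c]$ it is within $\approx4^k\delta$ of it on $[0,2^kc]$, so $g_n$ resembles a line of slope $\approx g_n'(0)$ out to scale $\sim c\,\delta^{-1/2}$; reconciling this with $g_n\le f_1\le1$ forces $\sup_{[0,c]}\big(g_n'(0)-g_n'\big)\gtrsim g_n'(0)^2c^2$, i.e.\ $g_n'$ must genuinely dip below its maximum on $[0,c]$. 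The whole crux --- and where a proof of Conjecture \ref{Conjecture 2} would stand or fall --- is to show this unavoidable dip can be squeezed onto a set of vanishing measure, so that its cost in $\int_0^c g_n'$, hence (by Step 2's computation) in $\operatorname{osc}(f_1-g_n)+c\,g_n'(0)$, tends to $0$; equivalently, that the infimum equals $f_1(c)$ and is not strictly larger.
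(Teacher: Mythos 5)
There is a genuine gap: your proposal never proves the statement. Steps 1 and 2 are fine --- the reduction to $f_1(x)=x/(x+1)$ via the integral representation, dilation covariance and subadditivity of $E_f$ is sound (and parallels the paper's own reduction in Section \ref{Reductions}), and your lower bound $E_{f_1}(c)\geq f_1(c)$ is a correct and genuinely interesting observation (translate $g$ so $g\leq f_1$, use $g'\leq g'(0)=\|g'\|_\infty$ for a positive-definite derivative, and compare at $x=c$). But that lower bound only shows the conjectured inequality has zero slack; the entire content of the conjecture is the upper bound $E_{f_1}(c)\leq f_1(c)$, and Step 3 does not establish it. You describe a family of candidate kernels $h_n$ and the constraints a minimizing sequence must satisfy, and then you explicitly concede that the decisive point --- that the unavoidable dip of $g_n'$ below $g_n'(0)$ on $[0,c]$, forced by positive-definiteness via $1-\cos 2\theta\leq 4(1-\cos\theta)$, can be confined to a set of vanishing measure at vanishing cost --- is exactly what you cannot prove. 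Indeed your own obstruction analysis leaves open whether the infimum is $f_1(c)$ or strictly larger, so the proposal, as written, neither proves nor refutes the statement.

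For context, this is not a lapse you could have patched by consulting the paper: the statement is posed there as Conjecture \ref{Conjecture 2} and is not proved. The paper's best result in this direction is Theorem \ref{Main Thm}, obtained by taking $g'(x)=ae^{-bx^2}$ and numerically optimizing $(a,b)$ over a grid of $c$ values together with the interpolation Lemma \ref{C continuity points} and the corner-case bounds of Examples \ref{simple example} and \ref{x/(x+1) shift}, yielding only $E_{f_1}(c)\leq Cf_1(c)$ with $C<1.01975$. If you want a salvageable contribution from your attempt, it is the sharpness statement $E_{f_1}(c)\geq f_1(c)$ (and the forced asymptotics $g_n'(0)\to 1/(c+1)^2$ for near-minimizers), which quantifies that $C=1$ is the best constant obtainable from the functional $E_f$; but a proof of the conjecture itself would require resolving precisely the dip/measure issue you flagged, or an entirely different construction of competitors.
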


In Section \ref{Examples}, we explain how \cite{Boyadzhiev} and \cite{OP Corona}'s results for $\gamma_0(r)$ can be viewed as examples of our function approximation method. We also provide a few straight-forward examples which provide other (and even new) values of the constants as illustrations of the method.

In Section \ref{Proof of Main Theorem} we prove the main theorem of this paper:
\begin{thm}\label{Main Thm}
Suppose that $A, B, X \in B(\H)$, where $\H$ is finite dimensional and $\vertiii{-}$ is a unitarily-invariant norm on $B(\H)$. We require that $A, B\geq 0$. 
Let $f$ be a non-negative matrix monotone function on $[0, \infty)$. Then
\begin{equation}\label{CIneqMod1}
\vertiii{f(A)X-Xf(B)} \leq C\vertiii{X}f\left(\frac{\vertiii{AX-XB}}{\vertiii{X}}\right)
\end{equation} holds for $C<  1.01975$. If $\vertiii{-}$ is the operator norm, then there is no restriction on $\H$. 
\end{thm}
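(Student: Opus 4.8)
The plan is to stack the reductions of Section~\ref{Reductions} and then to exhibit a sufficiently good competitor in the approximation problem. First, the standard block-matrix reduction \eqref{doubling}, applied symmetrically to all three of $\vertiii{f(A)X-Xf(B)}$, $\vertiii{AX-XB}$ and $\vertiii{X}$, reduces \eqref{CIneqMod1} to the case $A=B$ with $X$ self-adjoint, so the left side is $\vertiii{[f(A),X]}$; since $[c_0 I,X]=0$, this left side is unchanged on replacing $f$ by $f-f(0)$, while the right side can only grow, so we may assume $f(0)=0$. Using the integral representation $f(x)=\beta x+\int_{(0,\infty)}\tfrac{x}{x+t}\,d\nu(t)$ of a non-negative operator monotone function with $f(0)=0$, the linearity of $Y\mapsto[Y,X]$, and the triangle inequality for $\vertiii{-}$, matters reduce to the single function $f(x)=\tfrac{x}{x+1}$: the term $\beta x$ is exact with constant $1$, and $\tfrac{x}{x+t}=f(x/t)$ is absorbed by rescaling $A$. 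Finally, the argument of Section~\ref{Reductions} linking \eqref{CIneqMod} to the functional $E_f$ leaves us to prove
\[
E_f(c)=\inf_{g'\in\wM_+}\Bigl(\sup_{x\ge 0}(f-g)(x)-\inf_{x\ge 0}(f-g)(x)+c\,g'(0)\Bigr)\le C\,f(c)
\]
for $f(x)=\tfrac{x}{x+1}$, all $c>0$, with $C<1.01975$.

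For the core, I would construct an explicit one-parameter family of competitors $g=g_c$. A convenient admissible pool consists of those $g$ whose derivative $g'$ is completely monotone on $[0,\infty)$: such $g'$ equal $\int_0^\infty e^{-tx}\,d\rho(t)$ for a positive measure $\rho$, and since $x\mapsto e^{-t|x|}$ is positive-definite, the even extension of $g'$ lies in $\wM_+$; the P\'olya criterion (even, continuous, convex and decreasing on $[0,\infty)$, vanishing at $\infty$) supplies further admissible derivatives. Because $f'(x)=(1+x)^{-2}=\int_0^\infty t e^{-t}e^{-tx}\,dt$ is completely monotone, the choice $g=f$ itself is admissible and already gives $E_f(c)\le c\,f'(0)=c$, which is within the target ratio only for very small $c$ and is useless once $f(c)$ saturates near $1$. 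The design principle for $g_c$ is therefore to interpolate continuously from $g_c\approx f$ for small $c$ (so the oscillation $\sup(f-g_c)-\inf(f-g_c)$ is tiny) to $g_c$ essentially constant for large $c$ (so $g_c'(0)$ is tiny and the oscillation is $\approx\sup f-\inf f=1$), trading the oscillation budget against the commutator-Lipschitz budget $c\,g_c'(0)$, and then to optimize the resulting closed-form bound over the parameter(s) of the family.

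The main obstacle is pinning down this family: one needs, for every $c>0$, a competitor $g_c$ whose derivative is genuinely the Fourier transform of a positive measure, while the quantity $\sup_{x\ge0}(f-g_c)-\inf_{x\ge0}(f-g_c)+c\,g_c'(0)$ stays below $C\,f(c)$ with $C$ as small as $1.01975$, uniformly in $c$. The positivity constraint on the spectral measure is precisely what blocks the naive candidates (scaling $f$ by a $c$-dependent factor, or discarding the low frequencies of $f'$): each of these only yields $C=2$, with the worst case at an interior value of $c$ rather than at $0$ or $\infty$, so the competitor must be shaped with care around that regime. Once the family is fixed, checking the final inequality is a one- or two-variable calculus estimate, which I would carry out last. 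Finally, since the block-matrix reduction, the integral representation, and the $E_f$-argument all go through verbatim for the operator norm on an arbitrary separable Hilbert space, the same proof delivers the conclusion with no restriction on $\dim\H$ in that case.
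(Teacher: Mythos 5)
Your reductions---doubling to $A=B$ with $X$ self-adjoint, normalizing $f(0)=0$, invoking the integral representation to pass to $f_1(x)=x/(x+1)$, and converting the problem to the bound $E_{f_1}(c)\le C f_1(c)$ for all $c>0$---coincide with Section \ref{Reductions} of the paper and are fine. But the proposal stops exactly where the theorem begins. The entire content of the constant $1.01975$ is the construction of an admissible competitor for every $c>0$ together with the verification that the resulting ratio stays below that number; you describe a ``design principle'' for such a family and then explicitly defer both the construction and the final estimate. As you yourself observe, the candidates you do exhibit give only $C=2$ (the choice $g=f$ and the trivial bound), and the worst case sits at an interior value of $c$, so nothing you have written controls the ratio there. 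Without an explicit $g_c$ and a computation of $\sup_{x\ge0}(f-g_c)-\inf_{x\ge0}(f-g_c)+c\,g_c'(0)$, no constant below $2$---let alone $1.01975$---has been established.

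For comparison, the paper takes $g'(x)=ae^{-bx^2}$ (so $g$ is a scaled error function, with $g'\in\wM_+$), analyzes the critical points of $j=f_1-g$ via the auxiliary function $\Phi(x)=bx^2-2\log(x+1)-\log a$, and---because the optimal $(a,b)$ admit no closed form---evaluates a rigorous upper bound for the ratio at several thousand grid points $c_k$ with controlled root-finding error, then uses Lemma \ref{C continuity points} to interpolate between grid points and the explicit bounds $E_{f_1}(c)\le c$ and $E_{f_1}(c)\le 1-\tfrac{1}{4c}$ (Examples \ref{simple example} and \ref{x/(x+1) shift}) to cover $c<0.0195$ and $c>40$. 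Your expectation that, once a family is fixed, the verification is ``a one- or two-variable calculus estimate'' to be ``carried out last'' underestimates this step: even within a fixed two-parameter family there is no closed form for the minimizer, and a uniform-in-$c$ bound this close to $1$ requires either a genuinely new explicit construction or a numerical scheme combined with an interpolation lemma of the above type. As it stands the proposal is a correct framing of the problem, not a proof of the theorem.
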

This is a great improvement from the previously known $C = 1.25$ for the operator norm. The proof involves computing upper bounds for $E_{f}(c)$ for $f(x)= x/(x+1)$ using $g$ being a parameterized antiderivative of a Gaussian for a large number of $c$'s then stitching together a universal bound of $\vertiii{[f(A),X]}$ using some continuity estimates and corner-case estimates from Section \ref{Examples}. 

The computations of the upper bounds for $E_f(c)$ involves a non-trivial MATLAB calculation on a large list of values of $c$ because there are not closed form expressions for the optimal parameters. Approximate optimal parameters are provided in the supplemental files \texttt{ErfMin\_as.txt} and \texttt{ErfMin\_bs.txt}.

We also note that the actual constant could likely be improved by more cleverly using MATLAB optimization functions to provide better approximate optimal parameters for the Gaussians, however it is not expected that it can produce the optimal estimate of $C = 1$.

In Section \ref{Special Cases}, we discuss how focusing on the commutator modulus of continuity $\Omega^\flat_f$ of $f(x) = x/(x+1)$ can produce better estimates for other operator monotone functions using the integral representation formula. Consequently, using this analysis and the data generated from the proof of the main theorem, we obtain
\begin{thm}\label{Main Thm cor sqrt}
Suppose that $A, B, X \in B(\H)$, where $\H$ is finite dimensional and $\vertiii{-}$ is a unitarily-invariant norm on $B(\H)$. We require that $A, B\geq 0$. 
Let $f$ be a non-negative matrix monotone function on $[0, \infty)$.
\begin{equation}\label{CIneqMod1sqrt}
\vertiii{A^{1/2}X-XB^{1/2}} \leq C\vertiii{X}^{1/2}\vertiii{AX-XB}^{1/2}
\end{equation} holds for $C<  1.00891$. If $\vertiii{-}$ is the operator norm, then there is no restriction on $\H$. 
\end{thm}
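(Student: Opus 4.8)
The plan is to reduce, exactly as in the proof of Theorem~\ref{Main Thm}, to the model function $g(x):=x/(x+1)$ and then to exploit that, unlike for a general operator monotone function, the representing measure of $x^{1/2}$ is completely explicit, so that the ($c$-dependent) estimates for $g$ produced in the proof of Theorem~\ref{Main Thm} can be \emph{averaged} against that measure instead of being collapsed to a single worst case.

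First I would run the standard reductions. As in the proof of Theorem~\ref{BK ineq}, the block substitution (\ref{doubling}) reduces (\ref{CIneqMod1sqrt}), for all unitarily-invariant norms simultaneously, to the case $A = B$ and $X = X^\ast$; and since (\ref{CIneqMod1sqrt}) is homogeneous in $X$ one may assume $\vertiii{X} \le 1$. So it suffices to bound $\vertiii{[A^{1/2}, X]}$ by $C\,\vertiii{[A, X]}^{1/2}$ for $A \ge 0$, $X = X^\ast$, $\vertiii{X} \le 1$. Put
\[B(\delta) := \sup\bigl\{\vertiii{[g(A), X]} : A \ge 0,\ X = X^\ast,\ \vertiii{X} \le 1,\ \vertiii{[A, X]} \le \delta \bigr\},\]
the unitarily-invariant-norm analogue of $\Omega^\flat_g$; as recalled in the introduction $B$ is nondecreasing with $\delta^{-1}B(\delta)$ nonincreasing, the resolvent identity $[g(A),X] = (A+1)^{-1}[A,X](A+1)^{-1}$ together with (\ref{unitary norm prop}) gives $B(\delta)\le\delta$, and $B(\delta)\le 2\vertii{g(A)}\,\vertiii{X}\le 2$.

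Next I would invoke the integral representation $x^{1/2} = \frac{1}{\pi}\int_0^\infty \frac{x}{x+t}\, t^{-1/2}\, dt$. Writing $f_t(x) = x/(x+t) = g(x/t)$, the functional calculus gives $[A^{1/2}, X] = \frac{1}{\pi}\int_0^\infty [f_t(A), X]\, t^{-1/2}\, dt$, so with $c := \vertiii{[A, X]}$ and $\vertiii{[A/t, X]} = c/t$,
\[\vertiii{[A^{1/2}, X]} \le \frac{1}{\pi}\int_0^\infty \vertiii{[g(A/t), X]}\, t^{-1/2}\, dt \le \frac{1}{\pi}\int_0^\infty B(c/t)\, t^{-1/2}\, dt = \Bigl(\frac{1}{\pi}\int_0^\infty B(s)\, s^{-3/2}\, ds\Bigr) c^{1/2},\]
the last equality being the substitution $t = c/s$. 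Hence (\ref{CIneqMod1sqrt}) holds with $C = \frac{1}{\pi}\int_0^\infty B(s)\, s^{-3/2}\, ds$, an integral that converges since $B(s) \le s$ near $0$ and $B(s) \le 2$ near $\infty$. Observe that if Conjecture~\ref{Conjecture 1} held for $g$, i.e. $B(s) \le g(s)$, this would give exactly $C = \frac{1}{\pi}\int_0^\infty \frac{s^{-1/2}}{s+1}\, ds = 1$; thus the problem becomes the purely quantitative one of bounding $\frac{1}{\pi}\int_0^\infty B(s)\, s^{-3/2}\, ds$. Using only $B(s)\le\min(s,2)$ this is at most $\frac{4\sqrt2}{\pi}\approx 1.80$; to do better I would feed in the grid bounds: the computations behind Theorem~\ref{Main Thm} furnish, on a fine finite grid $c_1 < \cdots < c_N$, values $\beta_i$ with $B(c_i) \le \beta_i$ close to the conjectural $g(c_i)$, and monotonicity of $B$ and of $\delta^{-1}B(\delta)$ interpolates them to an everywhere-defined majorant $\bar B \ge B$ (on $[c_i, c_{i+1}]$ take $\bar B(s) = \min\bigl((\beta_i/c_i)s,\ \beta_{i+1}\bigr)$, take $\bar B(s) = s$ for $s \le c_1$, and $\bar B(s) = \min\bigl((\beta_N/c_N)s,\ 2\bigr)$ for $s \ge c_N$, the corner-case estimates of Section~\ref{Examples} handling the ranges not covered by the grid); a numerical evaluation of $\frac{1}{\pi}\int_0^\infty \bar B(s)\, s^{-3/2}\, ds$ with explicit discretization and tail error bounds then yields $C < 1.00891$. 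Every step is available for the operator norm with no restriction on $\H$.

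The hard part is this last step: converting the finite list of computed bounds on $E_g(c)$ (equivalently on $B(c)$) into a rigorous, $s^{-3/2}$-integrable majorant $\bar B$ valid on all of $(0,\infty)$ — in particular controlling the two tails, where no direct computation is available — and then bounding the resulting improper integral precisely enough to land below $1.00891$, rather than merely below the $1.01975$ that one gets for free by quoting Theorem~\ref{Main Thm} directly with $f(x) = x^{1/2}$.
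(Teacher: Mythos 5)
Your proposal is essentially the paper's own proof: after the same block reduction, the paper integrates its per-$c$ bounds $C(c)$ for $f_1(x)=x/(x+1)$ against the representing measure $\frac1\pi t^{-1/2}\,dt$ of $x^{1/2}$ (its integrand $\frac{C(t)}{(1+t)t^{1/2}}$ is exactly your $B(t)t^{-3/2}$), interpolates the grid data via Lemma \ref{C continuity} (which is precisely your statement that $\delta^{-1}B(\delta)$ is non-increasing), and handles the two tails with the corner-case bounds of Example \ref{simple example}. One quantitative caveat: on $[c_N,\infty)$ you must use the trivial bound $B(s)\le \sup g-\inf g=1$ from Example \ref{simple example}, not $B(s)\le 2\vertii{g(A)}\le 2$ as written, since the latter roughly doubles the tail contribution $\frac1\pi\int_{40}^{\infty}B(s)s^{-3/2}\,ds$ (from about $0.10$ to about $0.18$) and would push the total above $1.00891$.
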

This is a great improvement of the known estimates and makes the conjectural value of 
$\gamma_0(r)=1$ rather plausible. 

In Section \ref{False Positives?}, we discuss two results in the literature that may give the impression that Conjecture \ref{Conjecture 1} is true. We show that these results extend to non-negative strictly concave functions, which shows that they cannot provide positive evidence for any of the conjectures we have stated because Ando's theorem fails in that generality for the operator norm as we discuss in the section.

We also prove a very general version of Conjecture \ref{Conjecture 1} for unitarily-invariant norms where the commutator Lipschitz norm of any Lipschitz function equals their Lipschitz norm (which applies for instance to the Hilbert-Schmidt norm) that does not even require $f$ to be operator monotone. This provides an extension of (Theorem 3.3, \cite{Jocic}). This result nicely points to the fact that our function approximation method in Conjecture \ref{Conjecture 2} relies on choosing an optimal function among a set of functions whose commutator Lipschitz norm is manageable. We also prove another related inequality.

\section{Operator Monotone Functions}\label{OMF}

In this section, we briefly review some of the fundamental results of operator monotone functions on $[0, \infty)$.
Some references for the standard material is (\cite{BarrySimon - Loewner},  Chapters 1, 4, and 34) and \cite{Monotone and Convex Operator Functions} . We will use a convention of the integral representation used in \cite{BK-commutators}. 

Let $I$ be an interval. We say that a function $f:I \to \R$ is operator monotone when for any Hilbert space $\H$ and all self-adjoint $A, B \in B(\H)$ having spectrum in $I$, if $A\leq B$ then $f(A) \leq f(B)$. If this is only required to hold when $\H$ is finite dimensional with $\dim \H \leq n$ then we say that $f$ is $n$-matrix monotone. If $f$ is $n$-matrix monotone for every $n \geq 0$ then we say that $f$ is matrix monotone.

Any operator monotone function is automatically monotonically increasing, concave, and analytic. 
By a standard perturbation argument, if $f$ is operator monotone on an open or half-open interval $I$ with $f$ continuous at one of the end points $a$ of $I$, then $f$ is operator monotone on $I\cup\{a\}$. Clearly, the sum and composition of operator monotone functions are operator monotone.

A fundamental example of an operator monotone function is $f(x) = -1/x$ on $(0, \infty)$. This then provides 
\begin{equation}
f_t(x) = \frac{x}{x+t} = 1-\frac{t}{x+t}
\end{equation} 
being operator monotone on $[0,\infty)$ for any $t > 0$.

We now state a deep theorem (Loewner's Theorem) about operator monotone functions. This provides different characterizations of these functions, one as an integral representation in terms of the $f_t(x)$ and one as an analytic function with the upper-halfplane as an invariant subset. We choose the domain and codomain of $f$ from the context that we are concerned about in this paper so as to obtain the integral representation as in \cite{BK-commutators}.
\begin{thm}\label{Loewner}
Let $f: (0, \infty) \to (0, \infty)$. Then the following are equivalent
\begin{enumerate}[label = (\roman*)]
\item $f$ is operator monotone on $(0, \infty)$.

\item $f$ is matrix monotone on $(0, \infty)$.

\item There is a finite positive measure $\nu$ on $[0, \infty)$ and constants $\alpha, \beta \geq 0$ so that
\begin{equation}\label{integral representation}
f(x) = \alpha + \beta x + \int_0^\infty \frac{xt}{x+t} d\nu(t).
\end{equation}

\item $f$ extends to an analytic function on $\C \setminus (-\infty, 0]$ such that if $z \in \C$ satisfies $\Im(z) > 0$ then $\Im(f(z)) > 0$.

\end{enumerate}
\end{thm}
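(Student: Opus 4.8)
The plan is to prove the four conditions equivalent by the cycle $(iii)\Rightarrow(i)\Rightarrow(ii)\Rightarrow(iv)\Rightarrow(iii)$, dispatching the easy links directly and isolating the one step that is genuinely Loewner's theorem. For $(iii)\Rightarrow(i)$: for each fixed $t>0$ the integrand $\tfrac{xt}{x+t}=t-\tfrac{t^2}{x+t}$ is operator monotone on $[0,\infty)$, because $x\mapsto-1/x$ is operator monotone and operator monotonicity survives composition with the increasing affine map $x\mapsto x+t$, multiplication by a positive scalar, and addition of a constant; passing to finite positive combinations and then to the integral by a standard limiting argument shows $\int_0^\infty\tfrac{xt}{x+t}\,d\nu(t)$ is operator monotone, and adding $\alpha+\beta x$ (operator monotone for $\alpha,\beta\ge0$) completes it. The link $(i)\Rightarrow(ii)$ is immediate, since operator monotonicity over all Hilbert spaces in particular yields matrix monotonicity. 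For $(iii)\Rightarrow(iv)$: each $z\mapsto\tfrac{zt}{z+t}$ is analytic on $\C\setminus(-\infty,0]$ (its only pole is at $-t$) and satisfies $\Im\tfrac{zt}{z+t}=\tfrac{t^2\,\Im z}{|z+t|^2}>0$ whenever $\Im z>0$; the same holds for $z\mapsto\alpha+\beta z$ with $\beta\ge0$, and both properties are preserved under summation and integration against a positive measure.

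For $(iv)\Rightarrow(iii)$ I would apply the Nevanlinna--Herglotz representation of Pick functions: an analytic function on the upper half-plane with nonnegative imaginary part has the form $f(z)=a+bz+\int_{\R}\bigl(\tfrac{1}{s-z}-\tfrac{s}{1+s^2}\bigr)\,d\mu(s)$ with $b\ge0$ and $\mu\ge0$ satisfying $\int(1+s^2)^{-1}\,d\mu(s)<\infty$, the standard proof being to recognize $\Im f$ as the Poisson integral of a positive measure, reconstruct $f$ up to an affine term, and take boundary values. The hypothesis that $f$ continues analytically across $(-\infty,0]$ forces $\supp\mu\subseteq(-\infty,0]$; the substitution $s=-t$ together with simplification of $\tfrac{1}{-t-z}-\tfrac{-t}{1+t^2}$ rewrites the integral in terms of $\tfrac{zt}{z+t}$ up to an affine correction, and restricting to $z=x\in(0,\infty)$ and using that $f$ maps into $(0,\infty)$ pins down the signs $\alpha,\beta\ge0$ and the stated properties of $\nu$. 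This part is bookkeeping once the Herglotz formula is in hand.

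The substantive implication is $(ii)\Rightarrow(iv)$. First, matrix monotonicity of order $2$ already forces $f\in C^1$ on $(0,\infty)$: positivity of the $2\times2$ (and $3\times3$) Loewner matrices translates into monotonicity and convexity constraints on the divided differences $\tfrac{f(x)-f(y)}{x-y}$, which produce one-sided derivatives and continuity of $f'$. With $f\in C^1$, the Daleckii--Krein formula gives $\tfrac{d}{dt}f(A+tH)|_{t=0}=L\circ H$ for $A=\diag(x_1,\dots,x_n)$, where $L=\bigl[\tfrac{f(x_i)-f(x_j)}{x_i-x_j}\bigr]_{i,j}$ (with diagonal entries $f'(x_i)$) and $\circ$ is the Schur product; matrix monotonicity forces $L\circ H\ge0$ for every $H\ge0$, and choosing $H=vv^{\ast}$ with $v$ having no zero entry gives $L\ge0$ for every tuple of distinct points $x_1,\dots,x_n$ in $(0,\infty)$. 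The remaining, genuinely hard step is the converse Loewner fact: positive semidefiniteness of all Loewner matrices of a real function on an interval implies the function is the restriction of a Pick function, i.e.\ $(iv)$. I would prove this via Donoghue's reproducing-kernel argument --- interpret the inequalities $L\ge0$ as positivity of a kernel, build the associated Hilbert space, realize $f$ through a self-adjoint operator (equivalently its spectral measure) acting on it, and extract both the analytic continuation off the real axis and the half-plane property --- or, alternatively, by approximating with rational Pick functions interpolating finite Loewner data, or via the Hamburger moment problem as in Bendat--Sherman. I expect exactly this passage --- turning a real-variable positivity condition into a complex-analytic conclusion --- to be the main obstacle, since it is the real content of the theorem; everything else is routine.
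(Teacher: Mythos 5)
The paper does not actually prove this theorem: it is quoted as a known deep result (Loewner's theorem), with the proof deferred to the cited references (Simon's book, Bendat--Sherman), so there is no in-paper argument to compare yours against. Your outline reproduces the standard architecture from that literature, and the easy links are dispatched correctly: $(iii)\Rightarrow(i)$ via operator monotonicity of $-1/x$ and closure under positive combinations and limits; $(i)\Rightarrow(ii)$ trivially; and for $(iv)\Rightarrow(iii)$ the Nevanlinna--Herglotz representation plus Stieltjes inversion is the right mechanism (analytic continuation across $(0,\infty)$ forces $\supp\mu\subseteq(-\infty,0]$, and positivity of $f$ on $(0,\infty)$ controls the measure near $0$ and the signs of $\alpha,\beta$). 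You also correctly isolate $(ii)\Rightarrow(iv)$ as the real content, with the right two-stage structure: $2$-monotonicity forces $C^1$, the Daleckii--Krein/Schur-product computation with $H=vv^\ast$ converts $n$-monotonicity into positive semidefiniteness of all Loewner matrices, and then positivity of the Loewner kernels must be upgraded to the Pick property. That last passage, however, is only named (Donoghue's reproducing-kernel argument, rational interpolation, or the Hamburger moment route), not carried out --- and it is where all the depth lives. So what you have is a correct and well-organized roadmap rather than a complete proof; given that the paper itself black-boxes the theorem, that is a defensible stance, but be clear that the hard implication remains an appeal to the literature.

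Two caveats about the statement itself that a careful write-up of $(iv)\Rightarrow(iii)$ would expose. First, as transcribed, (iii) asks for a \emph{finite} measure $\nu$, but your bookkeeping cannot produce one in general: for $f(x)=x^r$ the paper's own representation reads $x^r=\frac{\sin(r\pi)}{\pi}\int_0^\infty\frac{xt}{x+t}\,t^{r-2}\,dt$, and $t^{r-2}\,dt$ has infinite mass near $t=0$; the correct normalization is $\int_0^\infty\frac{t}{1+t}\,d\nu(t)<\infty$. Second, (iv) demands $\Im f(z)>0$ strictly, which fails for constant $f$ (operator monotone, and representable in (iii) with $\beta=0$, $\nu=0$); the equivalence needs $\Im f(z)\ge 0$ or the exclusion of constants. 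Neither is a defect of your argument --- both are imprecisions in the statement --- but your claim that the positivity of $f$ on $(0,\infty)$ ``pins down the stated properties of $\nu$'' is not quite right, since the stated finiteness is not actually attainable.
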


We also have the following relevant standard examples
\begin{thm}
Let $r \in \R$. Then $f(x) = x^r$ is operator monotone on $[0, \infty)$ if and only if $r \in [0, 1]$.  Moreover, for $r \in (0,1)$, we have the integral representation
\[x^r = \frac{\sin(r\pi)}{\pi}\int_0^\infty \frac{x}{x+t}t^{r-1}dt.\]
\end{thm}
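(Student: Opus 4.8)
The plan is to prove the ``only if'' direction directly from the general properties of operator monotone functions recalled above, and to obtain both the integral representation and the ``if'' direction straight from that representation, bypassing Loewner's theorem.

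For the ``only if'' direction, suppose $f(x)=x^r$ were operator monotone on $(0,\infty)$. Since operator monotone functions are (scalar) monotonically increasing, and $x\mapsto x^r$ is decreasing on $(0,\infty)$ when $r<0$, we must have $r\ge 0$. Since operator monotone functions are concave, and $x\mapsto x^r$ has second derivative $r(r-1)x^{r-2}>0$ on $(0,\infty)$ when $r>1$ — hence is strictly convex there — we must have $r\le 1$. So $r\in[0,1]$. (If one prefers an explicit obstruction at a single exponent: with $A=\left(\begin{smallmatrix}1&1\\1&1\end{smallmatrix}\right)\le\left(\begin{smallmatrix}2&1\\1&1\end{smallmatrix}\right)=B$ one has $B^2-A^2=\left(\begin{smallmatrix}3&1\\1&0\end{smallmatrix}\right)$, whose determinant is $-1<0$, so $x^2$ is not even $2$-matrix monotone.)

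For the integral representation when $r\in(0,1)$, the only non-elementary ingredient is the classical Euler integral $\int_0^\infty \frac{u^{r-1}}{1+u}\,du=\frac{\pi}{\sin(\pi r)}$, which I would obtain from the substitution $u=s/(1-s)$ reducing it to $B(r,1-r)=\Gamma(r)\Gamma(1-r)=\pi/\sin(\pi r)$ via the reflection formula, or directly from a keyhole-contour computation. Given this, substituting $u=t/x$ for fixed $x>0$ turns $\int_0^\infty \frac{x}{x+t}\,t^{r-1}\,dt$ into $x^r\int_0^\infty \frac{u^{r-1}}{1+u}\,du$, whence $x^r=\frac{\sin(r\pi)}{\pi}\int_0^\infty \frac{x}{x+t}\,t^{r-1}\,dt$ after rearranging. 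The integral converges since the integrand is $O(t^{r-1})$ as $t\to 0^+$ (integrable because $r>0$) and $O(t^{r-2})$ as $t\to\infty$ (integrable because $r<1$), and by continuity the identity persists at $x=0$, both sides being $0$.

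Finally, for the ``if'' direction: $r=0$ and $r=1$ give the constant function and the identity, which are trivially operator monotone, so assume $r\in(0,1)$. Given self-adjoint $A\le B$ with spectra in $(0,\infty)$, each $f_t(x)=\frac{x}{x+t}=1-\frac{t}{x+t}$ is operator monotone — recorded above, and also visible from $A+t\le B+t$ being positive invertible, so $(A+t)^{-1}\ge(B+t)^{-1}$ and hence $f_t(A)\le f_t(B)$. Applying the spectral theorem eigenvalue-by-eigenvalue to the representation above gives $A^r=\frac{\sin(r\pi)}{\pi}\int_0^\infty f_t(A)\,t^{r-1}\,dt$ and likewise for $B$, so for every vector $\xi$ one has $\langle \xi,(B^r-A^r)\xi\rangle=\frac{\sin(r\pi)}{\pi}\int_0^\infty \langle\xi,(f_t(B)-f_t(A))\xi\rangle\,t^{r-1}\,dt\ge 0$, each integrand being nonnegative and $\sin(r\pi)>0$; hence $A^r\le B^r$. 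This gives operator monotonicity on $(0,\infty)$, and since $x^r$ extends continuously to $x=0$ with value $0$, the perturbation principle recalled above upgrades it to $[0,\infty)$. The only genuine work is the Euler integral evaluation; the rest is a substitution, the already-recorded operator monotonicity of $x/(x+t)$, and the fact that the operator order is preserved under integration against a positive weight.
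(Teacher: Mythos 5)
Your argument is correct, and it is worth noting that the paper itself offers no proof of this statement: it is recorded as a standard example immediately after Loewner's theorem, with the cited references (Simon's book, Bendat--Sherman) supplying the usual route, namely checking Loewner's condition (iv) that $z\mapsto z^r$ maps the upper half-plane into itself exactly when $r\in[0,1]$, and reading off the integral formula from the representation (iii) or the beta integral. You bypass Loewner entirely: the ``only if'' direction follows from the recalled facts that operator monotone functions on $[0,\infty)$ are increasing and concave (which kills $r<0$ and $r>1$; your $2\times 2$ example for $r=2$ checks out, $B^2-A^2=\bigl(\begin{smallmatrix}3&1\\1&0\end{smallmatrix}\bigr)$ has negative determinant), the integral identity is just the Euler/beta evaluation $\int_0^\infty u^{r-1}(1+u)^{-1}\,du=\pi/\sin(\pi r)$ plus the substitution $t=xu$, and the ``if'' direction integrates the operator monotonicity of $f_t(x)=x/(x+t)$ against the positive weight $t^{r-1}$, with the endpoint $0$ recovered by the continuity-at-the-endpoint principle the paper records. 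The one point to say a word about is the interchange of the quadratic form with the integral: for $A$ with spectrum in a compact subset of $(0,\infty)$ one has $\|f_t(A)\|\le\min(1,\|A\|/t)$, so $t^{r-1}f_t(A)$ is Bochner integrable and the swap is legitimate; with that remark the proof is complete. What your route buys is a self-contained argument using only facts already stated in the paper (plus one classical integral); what the Loewner route buys is that both directions and the representation come for free once the deep theorem is invoked, which is why the paper leaves it as a citation.
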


\section{Commutator Bounds}
\label{Commutator Bounds}
In this section we lay out some essential commutator estimates that will underlie our formulation of the function approximation problem. 

Let $A, X \in B(\H)$ with $A$ self-adjoint with spectrum in the interval $[a, b]$. Let $\vertiii{-}$ be a unitarily-invariant norm on $B(\H)$. If $\vertiii{-}$ is not the operator norm, we will usually assume that $\H$ is finite dimensional.

By \cite{Kittaneh Norm Ineq sa ops}, 
\begin{equation}\label{Kittaneh ineq}
\vertiii{[A,X]} \leq (b-a)\vertiii{X}.
\end{equation}
This inequality holds if $\H$ is merely separable but we wish to avoid some of the complications of compactness and continuity that appear in later calculations in infinite dimensions so we restrict to $\H$ finite-dimensional when considering norms other than the operator norm.

Note that for the operator norm the above inequality follows from the straightforward standard calculation:
\[\vertii{[A,X]} = \vertii{[A - \frac{a+b}{2},X]} \leq 2\vertii{A - \frac{a+b}{2}}\vertii{X} \leq (b-a)\vertii{X}.\]

Now, suppose that $A \geq 0$ and $\vertiii{X} \leq 1$. Let $h(x)$ be a real-valued continuous function on $[0, \infty)$. Then by (\ref{Kittaneh ineq}),
\[ \vertiii{[h(A), X]} \leq \sup_{0 \leq x \leq \|A\|} h(x) - \inf_{0 \leq x \leq \|A\|}
 h(x) \leq \sup_{x\geq 0} h(x) - \inf_{x \geq 0} h(x).\] 
We refer to this bound as the ``trivial bound.'' We use this terminology because it uses very little information about the function $h$ other than the diameter of its range.

An entirely inequivalent bound is the  following:
\[ \vertiii{[h(X), Y]} \leq Const_{\mathfrak F}\, \vertiii{[X,Y]}\]
for all $X, Y \in B(\H)$ with $X$ being self-adjoint with spectrum in a set $\mathfrak F$.
If such a bound exists, We will call $h$ commutator $\vertiii{-}$-Lipschitz on $\mathfrak F$.
We call the smallest such constant for Const. for all $X$ and $Y$, as specified above, the commutator $\vertiii{-}$-Lipschitz norm for $h$ on $\mathfrak F$. We will denote this constant as $\vertiii{h}_{\CL(\mathfrak F)}$ and refer to it as the $\CL_{\vertiii{-}}(\mathfrak F)$ norm. 

See \cite{OMC}, \cite{OCMC}, and \cite{OLF} for more about this for the operator norm from where the analogous definitions and results for $\vertiii{-}$ stated here are sourced.

We state some general facts about the $\CL_{\vertiii{-}}$ norm that we will find useful. We follow the examples in \cite{B&R} and Section 1.1 of \cite{OLF}. By Theorem 4.8 and Theorem 4.11 of \cite{FPUSAO}, the operator $\vertii{-}$-Lipschitz norm and the commutator $\vertii{-}$-Lipschitz norm are identical and that $\vertiii{g}_{\CL} \leq \vertii{g}_{\CL}$. 

Note that it is possible that $g$ is not commutator $\vertii{-}$-Lipschitz but that it is $\vertiii{-}$-Lipschitz for some unitarily-invariant norm $\vertiii{-}$. For instance, the Hilbert-Schmidt norm has that every Lipschitz function is commutator Lipschitz with the commutator Lipschitz constant being the usual Lipschitz constant (\cite{Kittaneh Lip}). There are Lipschitz functions that are not commutator Lipschitz with respect to the operator norm.

We now discuss some commutator estimates for common functions.
Suppose that $X, Y \in B(\H)$ with $X$ self-adjoint and $\vertiii{Y} < \infty$. Then
\[\vertiii{[e^{ikX}, Y]} \leq |k|\cdot\vertiii{[X, Y]}\] we note that we only need to prove this for $k > 0$. Then using
\[\frac{d}{dt}e^{itX}Ye^{-itX} = e^{itX}[X,Y]e^{-itX}\] 
we have
\begin{align*}
\vertiii{[e^{ikX}, Y]} &= \vertiii{(e^{ikX}Ye^{-ikX}-Y)e^{ikX}} = \vertiii{\int_0^ke^{itX}[X,Y]e^{-itX}dt} \\
&\leq \int_0^k\vertiii{e^{itX}[X,Y]e^{-itX}}dt = k\vertiii{[X,Y]}.
\end{align*}

The next commutator inequality is for the inverse function. If $X$ is self-adjoint with $X \geq a > 0$ then
\[\vertiii{[X^{-1},Y]} = \vertiii{X^{-1}[X,Y]X^{-1}} \leq \|X^{-1}\|^2\vertiii{[X,Y]} \leq a^{-2}\vertiii{[X,Y]}.\]

By Example 2 in Section 1.1 of \cite{OLF}, if $\log$ is the principal branch of the logarithm, then $\log(1+ix) = \int_0^\infty \frac{1}{t+1} - \frac{1}{t+ix+1}dt$. However, the integral $\int_0^\infty \frac{1}{t+1} - \frac{1}{t+z}dt$ converges absolutely and uniformly on compact subsets of $\C \setminus (-\infty, 0]$. 
In particular, for such a compact set $K$ there is a constant $d > 0$ so that for all $z \in K$, $|t+z| \geq d$ for all $t \geq 0$. So, for $t > |\Re(z)\,|+1$,
\[\verti{\frac{1}{t+1} - \frac{1}{t+z}} = \frac{|z-1|}{|(t+1)(t+z)|} \leq \frac{|z-1|}{(t+1)(t-|\Re(z)|)} \leq \frac{|z-1|}{(t-|\Re(z)|)^2}\] and for $0 \leq t \leq  |\Re(z)\,|+1$,
\[\verti{\frac{1}{t+1} - \frac{1}{t+z}} \leq 1 + \frac{1}{d}.\]
So, we see that by analytic continuation,
\[ix = \log(e^{ix}) = \int_0^\infty \frac{1}{t+1} - \frac{1}{t+e^{ix}}dt\] 
for $|x| < \pi$ and it converges absolutely for compact subintervals of $(-\pi, \pi)$.

So, suppose that $X, Y \in B(\H)$ with $X$ self-adjoint, $\|X\| < \pi$, and $\vertiii{Y} < \infty$ then
\begin{align*}
i[X,Y] &= [\log(e^{iX}),Y] = -\int_0^\infty [(t+e^{iX})^{-1},Y]dt
\end{align*}
Taking norms, we obtain
\begin{align}
\vertiii{[X,Y]} &\leq \left(\int_0^\infty \vertii{(t+e^{iX})^{-1}}^2 dt\right) \vertiii{[e^{iX},Y]} \nonumber \\
&= \left(\int_0^\infty \frac1{(t+\cos(\|X\|))^2+\sin(\|X\|)^2} dt\right) \vertiii{[e^{iX},Y]} \nonumber \\
&= \left(\int_{\cos(\|X\|)}^\infty \frac1{t^2+\sin^2(\|X\|)} dt\right) \vertiii{[e^{iX},Y]} \nonumber \\
&= \frac{\|X\|}{\sin(\|X\|)}\vertiii{[ e^{iX},Y]}. \label{log Lip}
\end{align}
This provides a sort of equivalence of commutators with $X$ and with $e^{iX}$:
\begin{equation}\label{exp equivalence}
\vertiii{[e^{iX}, Y]} \leq \vertiii{[X, Y]} \leq \frac{\|X\|}{\sin(\|X\|)}\vertiii{[e^{iX},Y]}.
\end{equation}
This gives commutator Lipschitz norms of these functions, but the operator and commutator moduli of continuity are known:
\begin{thm}(Theorem 5.9, 5.10, \cite{FPUSAO}) Let $f(x) = c_1e^{ikt}+c_2e^{-ikt}$ for $c_1, c_2 \in \C$. Then
\[\omega_f(\delta) = \Omega_{f}(\delta) = (|c_1|+|c_2|)\beta_k(\delta)\]
and
\[\Omega^\flat_{f}(\delta) = (|c_1|+|c_2|)\min\left(k\delta, 2\right),\]
where
\[\beta_k(\delta) = \left\{ \begin{array}{ll}
2\sin\left(\frac{k}2\delta\right), & 0 < \delta \leq \frac\pi{k}\\
2, & \delta > \pi/k
\end{array}\right..\]
\end{thm}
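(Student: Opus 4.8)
The plan is to reduce everything to the case $k=1$ by the obvious rescaling $A \mapsto kA$, so that it suffices to treat $f(x)=c_1 e^{ix}+c_2 e^{-ix}$ and show $\omega_f(\delta)=\Omega_f(\delta)=(|c_1|+|c_2|)\beta_1(\delta)$ together with $\Omega^\flat_f(\delta)=(|c_1|+|c_2|)\min(\delta,2)$. First I would dispose of the scalar modulus of continuity $\omega_f$: writing $c_j = |c_j| e^{i\theta_j}$, the function $f(x) = |c_1| e^{i(x+\theta_1)} + |c_2| e^{-i(x-\theta_2)}$ has $|f(x)-f(y)|$ maximized, over $|x-y|\le\delta$, by a direct trigonometric computation — the difference of two unit-modulus rotating terms — giving $2\sin(\delta/2)$ for $\delta \le \pi$ and the saturation value $2$ beyond, scaled by $|c_1|+|c_2|$. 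This is the routine part.

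For $\Omega_f$ I would argue $\omega_f \le \Omega_f$ is automatic (choose scalar $A,B$), so the content is the reverse inequality $\Omega_f(\delta) \le (|c_1|+|c_2|)\beta_1(\delta)$. Here I would use the commutator/difference estimate for exponentials established in the excerpt: $\vertiii{[e^{iX},Y]}\le \vertiii{[X,Y]}$ and, more to the point for differences, the analogue $\vertii{e^{iA}-e^{iB}} \le \beta_1(\vertii{A-B})$, which follows from writing $e^{iA}-e^{iB}=\int_0^1 \frac{d}{dt}e^{itA}e^{i(1-t)B}\,dt$ and estimating, or alternatively from the known fact that $x\mapsto e^{ix}$ has operator modulus of continuity exactly $\beta_1$. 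Then $\vertii{f(A)-f(B)} \le |c_1|\,\vertii{e^{iA}-e^{iB}} + |c_2|\,\vertii{e^{-iA}-e^{-iB}} \le (|c_1|+|c_2|)\beta_1(\vertii{A-B})$, and since $\beta_1$ is nondecreasing this gives the bound on $\Omega_f$.

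For $\Omega^\flat_f$ the lower bound $\Omega^\flat_f(\delta)\ge (|c_1|+|c_2|)\min(\delta,2)$ should come from an explicit example: take $B=A$, and $X$ of norm one engineered (e.g. a suitable $2\times 2$ or rank-one construction) so that $\vertii{AX-XA}$ is small while $\vertii{f(A)X-Xf(A)}$ is forced near $(|c_1|+|c_2|)\delta$ in the small-$\delta$ regime, and the trivial bound $\vertii{f(A)X-Xf(A)}\le 2\vertii{f}_\infty$ caps it at $(|c_1|+|c_2|)\cdot 2$. The upper bound is the crux: using $\vertii{f(A)X-Xf(A)} \le |c_1|\,\vertii{[e^{iA},X]} + |c_2|\,\vertii{[e^{-iA},X]}$ and then the estimate $\vertii{[e^{ikX},Y]}\le |k|\,\vertii{[X,Y]}$ proved in the excerpt (with $k=1$) gives $\le (|c_1|+|c_2|)\,\vertii{[A,X]} \le (|c_1|+|c_2|)\delta$, while the trivial bound gives $\le (|c_1|+|c_2|)\cdot 2$; taking the minimum finishes it. The main obstacle I anticipate is the sharpness of the $\Omega_f$ computation — proving $\Omega_f$ is exactly $\beta_1$ and not merely bounded by it requires exhibiting extremal self-adjoint $A,B$ (or invoking the known exact operator modulus of continuity of $e^{ix}$), and verifying that the scalar extremizers already saturate the operator-norm inequality; this is where I would expect to lean on the cited Theorems 5.9–5.10 machinery rather than reprove it from scratch.
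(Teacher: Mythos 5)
First, note that the paper does not prove this statement at all: it is quoted verbatim from Theorems 5.9 and 5.10 of the cited Aleksandrov--Peller paper, so there is no internal proof to compare against. Judged on its own terms, your outline gets the routine parts right (the scalar computation of $\omega_f$, the reduction to $A=B$, the upper bound $\Omega^\flat_f(\delta)\le(|c_1|+|c_2|)\min(\delta,2)$ via $\|[e^{\pm iA},X]\|\le\|[A,X]\|$ and the trivial bound), but both of the genuinely hard halves are left unproved, and one of your proposed routes would fail.

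For $\Omega_f$, the integral formula $e^{iA}-e^{iB}=\int_0^1\frac{d}{dt}\bigl(e^{itA}e^{i(1-t)B}\bigr)dt$ only yields $\|e^{iA}-e^{iB}\|\le\|A-B\|$, not the sharp bound $2\sin\bigl(\|A-B\|/2\bigr)$; the exact identity $\Omega_{e^{ix}}=\beta_1$ is an operator Bernstein-type inequality and is precisely the content of the cited Theorem 5.9, so this step is an appeal to the reference, not a proof. More seriously, for the lower bound $\Omega^\flat_f(\delta)\ge(|c_1|+|c_2|)\min(\delta,2)$ your suggestion of ``a suitable $2\times2$ or rank-one construction'' cannot work. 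With $A=B$ diagonal in $M_2(\C)$ and $\|X\|\le1$, one has $\|[f(A),X]\|=|f(a_1)-f(a_2)|\,m$ and $\|[A,X]\|=|a_1-a_2|\,m$ with $m=\max(|x_{12}|,|x_{21}|)\le1$, and maximizing $2m\sin(\delta/(2m))$ over $m\le1$ gives exactly $\beta_1(\delta)$, which is strictly smaller than $\min(\delta,2)$ for $0<\delta<\pi$. So no fixed small matrix example separates $\Omega^\flat_f$ from $\Omega_f$; the extremal configurations must have unboundedly large rank (e.g.\ long truncated shifts or genuinely infinite-dimensional pairs), and exhibiting them is the substance of Theorem 5.10. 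A further unaddressed point is the alignment of the two terms $c_1[e^{iA},X]$ and $c_2[e^{-iA},X]$ in the lower bound: translating $A\mapsto A+s$ rotates these by $e^{is}$ and $e^{-is}$ respectively, so one must check that the extremal construction makes the two commutators asymptotically proportional before the phases can be matched to produce the factor $|c_1|+|c_2|$.
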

This shows in particular that 
for $f(x)$ equalling $e^{ix}$, $\cos(x)$, or $\sin(x)$:
\[\Omega_{f}(\delta) = \beta_1(\delta), \;\; \Omega^\flat_{f}(\delta) = \min\left(\delta, 2\right)\]
and $\Omega_{f}(\delta) < \Omega^\flat_{f}(\delta)$ for all $\delta \in (0,\pi)$. Moreover, the best constant $\Omega^\flat_{f}(\delta) \leq Const.\Omega_{f}(\delta)$ for these three functions $f$ is 
\begin{equation}\label{best operator and commutator ratio}
\sup_{\delta > 0} \frac{\Omega^\flat_{f}(\delta)}{\Omega_{f}(\delta)}=\frac{\Omega^\flat_{f}(2)}{\Omega_{f}(2)} = \csc(1).
\end{equation} 
This provides the lower bound in the tight estimate in Theorem \ref{sin(t) estimate}.

\vspace{0.1in}

We now merge together some of the results discussed in Section 3 of \cite{OMC} and Section 1.1 of \cite{OLF} with slightly different notation.
We use the convention of the Fourier transform: $\hat{f}(k) = \int_{\R} f(x)e^{-ixk}dx$. Let $\M$ denote the space of all signed Borel measures on $\R$ and $\M_+$ the subspace of all finite positive Borel measures on $\R$.
Let $\wL$ denote the space of functions that are the Fourier transform of a function in $L^1(\R)$.
Let $\wM$ denote the space of functions that are the Fourier transform of a measure in $\M$. If $h \in \wM$ there is a unique measure $\mu \in \M$ so that $\hat{\mu} = h$. We then define the norm 
\[\|h\|_{\wM} = \|\mu\|_{\M}.\]
We define $\|h\|_{\wL}$ similarly.

We have the following two results in \cite{OLF} for operator $\vertii{-}$-Lipschitz functions. For completeness, we present enough details from their proofs in \cite{OLF} to show that they immediately extend to $\vertiii{-}$-Lipschitz functions. Note that similar estimates are known (\cite{Boyadzhiev norm estimates for commutators}, Section 4 of \cite{Bhatia and Sinha}, Theorem 2.1 of  
\cite{Aujla completely monotone}, (2) of \cite{Brualdi editor in chief}, Theorem 4.8 and Theorem 4.11 of \cite{FPUSAO}) and what we state is a corollary of those results. 
\begin{lemma}\label{FT lemma}
Let $f \in C^1(\R)$ be such that $f' \in \wM$. Then $\vertiii{f}_{\CL(\R)} \leq \|f'\|_{\wM}$.

If $f' \in \wM_+$ then $\|f'\|_{\wM} = f'(0)$. 
\end{lemma}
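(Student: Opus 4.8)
The plan is to reduce the bound for $f$ to the exponential commutator estimate $\vertiii{[e^{ikX},Y]} \le |k|\,\vertiii{[X,Y]}$ established above, using the Fourier representation of $f'$. Write $f' = \hat\mu$ for the unique $\mu \in \M$ with $\|\mu\|_{\M} = \|f'\|_{\wM}$. Integrating $f'(s) = \int_\R e^{-isk}\,d\mu(k)$ over $[0,x]$ and applying Fubini — legitimate because $\mu$ is finite and $\bigl|\int_0^x e^{-isk}\,ds\bigr| \le |x|$ — gives
\[
f(x) = f(0) + \int_\R \frac{1 - e^{-ixk}}{ik}\,d\mu(k),
\]
where the integrand is interpreted as $x$ when $k = 0$ and is everywhere bounded in modulus by $\min(|x|,\,2/|k|)$.

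Next I would apply the functional calculus. Fix a self-adjoint $X$ and $Y$ with $\vertiii{Y} < \infty$; when $\vertiii{-}$ is not the operator norm the underlying space is finite dimensional, so in every case $X$ is bounded with compact spectrum. Since the scalar identity holds pointwise on the spectrum of $X$ and the integrand is uniformly norm-bounded, the operator-valued integral converges and
\[
f(X) = f(0)\,I + \int_\R \frac{I - e^{-ikX}}{ik}\,d\mu(k).
\]
Commuting with $Y$ — left and right multiplication by the bounded operator $Y$ passes through the integral, and $[I,Y] = 0$ — yields
\[
[f(X),Y] = -\int_\R \frac{[e^{-ikX},Y]}{ik}\,d\mu(k).
\]
Now take $\vertiii{-}$ inside the integral (the routine estimate $\vertiii{\int g\,d\mu} \le \int \vertiii{g}\,d|\mu|$; in finite dimensions all norms are equivalent, so this is unambiguous, and for the operator norm there is only one norm in play) and use $\vertiii{[e^{-ikX},Y]}/|k| \le \vertiii{[X,Y]}$, which holds uniformly in $k$, including in the limit $k \to 0$ since $[e^{-ikX},Y] = O(|k|)$ in $\vertiii{-}$:
\[
\vertiii{[f(X),Y]} \;\le\; \int_\R \frac{\vertiii{[e^{-ikX},Y]}}{|k|}\,d|\mu|(k) \;\le\; \|\mu\|_{\M}\,\vertiii{[X,Y]} \;=\; \|f'\|_{\wM}\,\vertiii{[X,Y]}.
\]
Since $\|f'\|_{\wM}$ is thus admissible in the defining infimum, $\vertiii{f}_{\CL(\R)} \le \|f'\|_{\wM}$. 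For the final sentence, if $f' = \hat\mu$ with $\mu \in \M_+$, then $\|f'\|_{\wM} = \|\mu\|_{\M} = \mu(\R) = \hat\mu(0) = f'(0)$.

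I do not expect a genuine obstacle here: the whole point is that the only nontrivial commutator inequality needed — the one for $e^{ikX}$ — is already available for an arbitrary unitarily-invariant norm, so the operator-norm argument of \cite{OLF} transfers with only cosmetic changes. The steps requiring a little care are the Fubini and operator-valued integration justifications, and the behaviour of the integrands near $k = 0$, where division by $ik$ is harmless because both $I - e^{-ikX}$ and $[e^{-ikX},Y]$ vanish to first order in $k$.
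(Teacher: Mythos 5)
Your argument is correct and is essentially the paper's own proof: the same Fourier representation $f(x)-f(0)=\int_\R \frac{1-e^{-ixk}}{ik}\,d\mu(k)$ (which the paper imports from \cite{OLF}), followed by the exponential commutator bound $\vertiii{[e^{-ikX},Y]}\le |k|\,\vertiii{[X,Y]}$ and the triangle inequality for the operator-valued integral. The only difference is that you verify the Fubini and $k\to 0$ details explicitly rather than citing them, which is fine.
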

\begin{proof}
By the calculation in \cite{OLF}, if $\mu \in \M$ such that $f' = \hat{\mu}$ then
\[f(x) -f(0) = i\int_\R \frac{e^{-itx}-1}{t}d\mu(t).\]
So,
\[\vertiii{[f(X), Y]} \leq \int_\R \vertiii{\frac1t[e^{-itX}-1, Y]}d|\mu|(t)\leq \int_\R \vertiii{[X, Y]}d|\mu|(t) = \|f'\|_{\wM}\vertiii{[X, Y]}.\]
When $\mu \in \M_+$, then
\[\int_\R d|\mu|(t) = f'(0).\]
\end{proof}

Now, \cite{OMC} attributes the following theorem to P{\'o}lya:
\begin{lemma}
Let $f \geq 0$ be a continuous even real-valued function on $\R$ such that $\lim_{x \to \infty} f(x) = 0$ and $f$ is convex on $[0,\infty)$. Then $f \in \wL$ and $\|f\|_{\wL} = f(0)$.
\end{lemma}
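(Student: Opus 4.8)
The plan is to produce an explicit nonnegative $g\in L^1(\R)$ with $\widehat{g}=f$. Since the Fourier transform is injective on $L^1(\R)$, such a $g$ is the unique preimage of $f$, so $\|f\|_{\wL}=\|g\|_{L^1}$; and since $g\ge 0$ we have $\|g\|_{L^1}=\int_\R g=\widehat{g}(0)=f(0)$, which gives both assertions at once. (If $f\equiv 0$ there is nothing to prove, so assume otherwise.)

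First I would extract the one-sided structure of $f$. As $f$ is convex on $[0,\infty)$ and bounded, its right derivative $f'_+$ is nondecreasing, and $f'_+\le 0$ everywhere (else $f\to+\infty$), so $f$ is nonincreasing on $[0,\infty)$; moreover $f'_+(x)\uparrow 0$ as $x\to\infty$ (else $f\to-\infty$). Let $\nu$ be the positive Lebesgue--Stieltjes measure on $(0,\infty)$ with $\nu((a,b])=f'_+(b)-f'_+(a)$; letting $b\to\infty$ gives $-f'_+(x)=\nu((x,\infty))$, which is finite. Using that $f$ is absolutely continuous on compact intervals, $f(x)-f(y)=\int_x^y(-f'_+)$ for $x<y$, and $f(y)\to 0$, we get $f(x)=\int_x^\infty(-f'_+(s))\,ds$; then Tonelli's theorem yields
\[
f(x)=\int_x^\infty \nu((s,\infty))\,ds=\int_{(x,\infty)}(t-x)\,d\nu(t)=\int_0^\infty (t-|x|)_+\,d\nu(t)\qquad (x\ge 0),
\]
and this extends to all $x\in\R$ since $f$ is even. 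Taking $x=0$ gives $\int_0^\infty t\,d\nu(t)=f(0)<\infty$.

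Next I would superpose the triangle functions. For each $t>0$ the kernel $g_t(x):=(1-\cos(tx))/(\pi x^2)$ is nonnegative, lies in $L^1(\R)$, and has $\widehat{g_t}(k)=(t-|k|)_+$ --- the standard Fej\'er/triangle Fourier pair --- so $\|g_t\|_{L^1}=\int_\R g_t=\widehat{g_t}(0)=t$. Set $g:=\int_0^\infty g_t\,d\nu(t)$. By Tonelli, $\|g\|_{L^1}\le\int_0^\infty\|g_t\|_{L^1}\,d\nu(t)=\int_0^\infty t\,d\nu(t)=f(0)<\infty$, so $g\in L^1(\R)$ and $g\ge 0$; the same absolute-integrability bound justifies Fubini's theorem, so $\widehat{g}(k)=\int_0^\infty\widehat{g_t}(k)\,d\nu(t)=\int_0^\infty(t-|k|)_+\,d\nu(t)=f(k)$. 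Hence $f=\widehat{g}\in\wL$, and by the opening remark $\|f\|_{\wL}=\|g\|_{L^1}=f(0)$.

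The one step that deserves care is the integral representation $f(x)=\int_0^\infty(t-|x|)_+\,d\nu(t)$: one must work with the right derivative to avoid the at most countably many points where the convex function $f$ fails to be differentiable, verify $f'_+(x)\to 0$, and check that $\nu((x,\infty))$ and $\int_0^\infty t\,d\nu(t)$ are finite (not $+\infty$) --- all of which follow from $f$ being bounded, nonnegative and $f(0)<\infty$. Everything else is routine Tonelli/Fubini bookkeeping together with the classical Fej\'er-kernel transform.
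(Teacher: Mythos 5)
Your argument is correct. Note that the paper does not prove this lemma at all: it is quoted as a known result attributed to P\'olya (via \cite{OMC}), so there is no in-paper proof to compare against. What you have written is precisely the classical proof of P\'olya's criterion: represent the even, convex, decreasing profile as a superposition $f(x)=\int_0^\infty (t-|x|)_+\,d\nu(t)$ of tent functions with $\nu$ the (positive) second-derivative measure, recognize each tent as the transform of the nonnegative Fej\'er kernel $g_t(x)=(1-\cos(tx))/(\pi x^2)$, and conclude $f=\widehat g$ with $g\ge 0$ and $\|g\|_{L^1}=\widehat g(0)=f(0)$; injectivity of the Fourier transform on $L^1$ then pins down $\|f\|_{\wL}$. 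The one place that genuinely needs the care you flag is the integral representation: $f_+'(0)$ may be $-\infty$ (e.g.\ $f(x)=(1-\sqrt{|x|})_+$), so $\nu$ need not be a finite measure near $0$, but your derivation only uses $\int_0^\infty t\,d\nu(t)=f(0)<\infty$, which survives. Since $\nu$ is $\sigma$-finite (finite on compacta of $(0,\infty)$), the Tonelli/Fubini steps are justified. The proof is complete and self-contained.
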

So we have the following lemma that we will make use of.
\begin{lemma}\label{convex derivative lemma}
Let $g \in C^1(\R)$ be an odd function such that $g'(x) \geq 0$ is convex on $[0, \infty)$ and that $g'(x)$ is decreasing to zero as $x \to \infty$. Then $\vertiii{g}_{\CL(\R)} \leq g'(0)$ for any unitarily-invariant norm $\vertiii{-}$ on $M_n(\C)$. 
\end{lemma}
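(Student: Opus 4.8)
The plan is to deduce the bound directly from the two preceding lemmas, so essentially no new work is needed: the target is to show that $g' \in \wM_+$ with $\|g'\|_{\wM} = g'(0)$, after which Lemma~\ref{FT lemma} finishes the argument immediately.

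First I would observe that, since $g$ is odd and $C^1$, differentiating the identity $g(-x) = -g(x)$ gives $g'(-x) = g'(x)$, so $g'$ is an even, continuous function; it is non-negative on $[0,\infty)$ by hypothesis and hence on all of $\R$ by evenness, it is convex on $[0,\infty)$, and it decreases to $0$ at infinity. These are exactly the hypotheses of P\'olya's lemma above (applied with $f = g'$), so I conclude $g' \in \wL$ and $\|g'\|_{\wL} = g'(0)$.

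Next I would upgrade this to a statement about $\wM$. Writing $g' = \hat{\phi}$ with $\phi \in L^1(\R)$ and recalling the convention $\hat{\phi}(k) = \int_\R \phi(x) e^{-ixk}\,dx$, the equality $\|\phi\|_{L^1} = \|g'\|_{\wL} = g'(0) = \int_\R \phi(x)\,dx$ forces $\phi \geq 0$ almost everywhere; hence $\phi\,dx$ is a finite positive Borel measure, i.e. $g' \in \wM_+$, and $\|g'\|_{\wM} = \|\phi\|_{L^1} = g'(0)$. (Even without the sign argument, the isometric inclusion $L^1(\R) \hookrightarrow \M$ gives $\|g'\|_{\wM} \leq \|g'\|_{\wL} = g'(0)$, which already suffices.) Then, since $g \in C^1(\R)$ with $g' \in \wM$, Lemma~\ref{FT lemma} yields $\vertiii{g}_{\CL(\R)} \leq \|g'\|_{\wM} = g'(0)$ for every unitarily-invariant norm $\vertiii{-}$, which is the claim.

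I do not expect a genuine obstacle; the only thing to be careful about is that $g$ itself need not be bounded — for example $g'(x) = (1+|x|)^{-1}$ yields $g(x) \sim \sgn(x)\log|x|$ — so one must work through the representation $g(x) - g(0) = i\int_\R \frac{e^{-itx}-1}{t}\,d\mu(t)$ underlying Lemma~\ref{FT lemma} rather than attempting to place $g$ itself in $\wM$, and the hypothesis $g' \in \wM$ (which we verified above) is precisely what makes that representation valid.
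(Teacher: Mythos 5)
Your proof is correct and is essentially the paper's intended argument: the paper states this lemma without a written proof precisely because it follows by applying P\'olya's lemma to the even function $g'$ and then invoking Lemma \ref{FT lemma}, which is exactly the chain you carry out (your observation that $\int_\R \phi = \|\phi\|_{L^1}$ forces $\phi \geq 0$, hence $g' \in \wM_+$, is a correct and slightly more careful rendering of this). Your closing remark about $g$ possibly being unbounded is a valid point of care, and the representation $g(x)-g(0)=i\int_\R \frac{e^{-itx}-1}{t}\,d\mu(t)$ from Lemma \ref{FT lemma} handles it as you say.
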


\section{Reduction to $X$ unitary}
\label{Reduction to X unitary}
In this section we demonstrate how the method of reducing to the case of $X$ unitary can be modified to provide a better constant in the case of the operator norm. In a sense, it appears that this section provides a dead-end as far as attempting to reduce the value of $C$ lower. For the reader interested in the Main Theorem, this section may safely be passed over.

We first outline how one might try to extend Bhatia and Kittaneh's method to perhaps its best estimate. Recall that
\begin{equation}\label{Cfactors}
\vertiii{[f(A),X]} \leq \frac{1+s_1(X)^2}{2} \vertiii{f\left(\frac{2}{1+s_n(X)^2}|[A,X]|\right)}
\end{equation}
is proved first by showing that 
\begin{equation}\label{U ineq}
\vertiii{[f(A),U]} \leq  \vertiii{f\left(|[A,U]|\right)}
\end{equation}
for $U$ unitary. Note that (\ref{U ineq}) follows from Ando's theorem because
\begin{align*}
\vertiii{f(A)U - Uf(A)} 
&= \vertiii{(f(A) - Uf(A)U^\ast)U} 
= \vertiii{f(A) - f(UAU^\ast)}\\ 
&\leq  \vertiii{f\left(|A - UAU^\ast|\right)} =  \vertiii{f\left(|AU - UA|\right)}.
\end{align*}

For $X$ self-adjoint, we apply the above inequality for $U$ being the Cayley transform of $X$. This then provides conversion factors between commutators of $X$ and commutators of $U$ with another matrix which appear inside and outside the function $f$ in (\ref{Cfactors}). Note that the operator monotonicity of $f$ and an analysis of the singular values of the commutator with the Cayley transform are used in obtaining the term
\[\vertiii{f\left(\frac{2}{1+s_n(X)^2}|[A,X]|\right)}.\]

We now review how (\ref{Cfactors}) is used to obtain
\[\vertiii{f(A)X-Xf(B)} \leq \frac54\vertiii{f(|AX-XB|)}.\]
One notes that $s_n(X)\geq 0$ and that it might equal zero. So, in order to remove the constant factor inside $f$, we apply this lemma instead to $tX$ for $t>0$ to obtain
\[t\vertiii{[f(A),X]} \leq \frac{1+t^2s_1(X)^2}{2} \vertiii{f\left(\frac{2t}{1+t^2s_n(X)^2}|[A,X]|\right)}\]
 so
\begin{align}\label{scaled}
\vertiii{[f(A),X]} \leq \frac{1+t^2\|X\|^2}{2t} \vertiii{f(\,2t|[A,X]|\,)}.
\end{align}
Taking $t = 1/2$ removes the factor inside $f$ and makes the factor outside equal $5/4$ when $\|X\| \leq 1$.

In the special case that $f(x) = x^r$, one can choose $t$ differently to optimize the overall constant factor because $f$ is $r$-homogeneous. With a general operator monotone function $f$, we are helpless to remove the inner constant so we must essentially just discard it.

Consider the following result.
\begin{thm}\label{sin(t) estimate}
Suppose that $f$ is a continuous function a closed subset $\mathfrak F \subset \R$. Suppose that $A, B, X$ are operators in $B(\H)$. We require that $A$ and $B$ be self-adjoint with spectrum in $\mathfrak F$. Then for $0<\|X\|<\pi$,
\begin{equation}\label{exp ix unitary C}
\vertii{f(A)X-Xf(B)} \leq \frac{\|X\|}{\sin(\|X\|)}\Omega_{f, \mathfrak F}\left(\vertii{AX-XB}\right).
\end{equation}

Consequently, we have the following inequalities
\[\Omega_{f, \mathfrak F} \leq \Omega^\flat_{f, \mathfrak F} \leq \csc(1)\Omega_{f, \mathfrak F}.\]
The first inequality is sharp. If $\mathfrak F = \R$ then the second inequality is sharp. Note that $\csc(1)< 1.1884$.
\end{thm}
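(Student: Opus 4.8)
The plan is to prove the main inequality (\ref{exp ix unitary C}) by passing from $X$ to its (scaled) Cayley transform, exactly as in the derivation of (\ref{log Lip}) but keeping track of the function $f$ rather than just a commutator Lipschitz constant. First I would reduce to the case $A=B$ and $X$ self-adjoint by the standard doubling trick (\ref{doubling}): replacing $A,B,X$ by $\tilde A = A\oplus B$ and $\tilde X = \begin{pmatrix}0 & X\\ X^\ast & 0\end{pmatrix}$ leaves all the relevant norms unchanged (note $\vertii{\tilde X}=\vertii{X}$ and $\vertii{[\tilde A,\tilde X]}=\vertii{AX-XB}$), and $f(\tilde A)=f(A)\oplus f(B)$, so $\vertii{[f(\tilde A),\tilde X]}=\vertii{f(A)X-Xf(B)}$; also $\mathfrak F$ still contains the spectrum of $\tilde A$. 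So it suffices to bound $\vertii{[f(A),X]}$ for $A$ self-adjoint with spectrum in $\mathfrak F$ and $X$ self-adjoint with $0<\vertii{X}<\pi$.

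Next, set $U = e^{iX}$, a unitary operator. By (\ref{unitary conjugation})/(\ref{U ineq}) applied with the operator norm (the case of $X$ unitary, where the constant is $1$), we have $\vertii{[f(A),U]}\le \Omega_{f,\mathfrak F}(\vertii{[A,U]})$, since $U^\ast A U$ has the same spectrum as $A$, hence in $\mathfrak F$. Now I would invoke the commutator equivalence (\ref{exp equivalence}): on one side $\vertii{[U,Y]}=\vertii{[e^{iX},Y]}\le\vertii{[X,Y]}$, and on the other side $\vertii{[X,Y]}\le \frac{\vertii{X}}{\sin\vertii{X}}\vertii{[e^{iX},Y]}$. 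Applying the first of these with $Y = A$ gives $\vertii{[A,U]}=\vertii{[A,e^{iX}]}\le\vertii{[A,X]}=\vertii{AX-XB}$; and applying the second with $Y=f(A)$ gives $\vertii{[f(A),X]}\le \frac{\vertii{X}}{\sin\vertii{X}}\vertii{[f(A),e^{iX}]}=\frac{\vertii{X}}{\sin\vertii{X}}\vertii{[f(A),U]}$. Chaining these three estimates and using that $\Omega_{f,\mathfrak F}$ is monotonically increasing yields
\[
\vertii{[f(A),X]}\le \frac{\vertii{X}}{\sin\vertii{X}}\,\Omega_{f,\mathfrak F}\!\bigl(\vertii{[A,U]}\bigr)\le \frac{\vertii{X}}{\sin\vertii{X}}\,\Omega_{f,\mathfrak F}\!\bigl(\vertii{AX-XB}\bigr),
\]
which is (\ref{exp ix unitary C}). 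For the consequence, first observe $\Omega_{f,\mathfrak F}\le\Omega^\flat_{f,\mathfrak F}$ by taking $X=I$, and this is sharp since equality is attained there. For the upper bound, take $A,B,X$ with $\vertii{X}\le1$ and $\vertii{AX-XB}\le\delta$; if $\vertii{X}<\pi$ (automatic) then (\ref{exp ix unitary C}) gives $\vertii{f(A)X-Xf(B)}\le \frac{\vertii{X}}{\sin\vertii{X}}\Omega_{f,\mathfrak F}(\delta)$, and since $t\mapsto t/\sin t$ is increasing on $(0,\pi)$ its supremum over $\vertii{X}\in(0,1]$ is $1/\sin(1)=\csc(1)$; taking the supremum over admissible $A,B,X$ gives $\Omega^\flat_{f,\mathfrak F}(\delta)\le\csc(1)\,\Omega_{f,\mathfrak F}(\delta)$. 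Sharpness when $\mathfrak F=\R$ follows from (\ref{best operator and commutator ratio}): for $f(x)=\sin x$ one has $\Omega^\flat_f(2)/\Omega_f(2)=\csc(1)$, so the constant cannot be improved.

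The only delicate point — and the place where I would spend the most care — is the honest justification of the integral representation $iX=\int_0^\infty (t+1)^{-1}-(t+e^{iX})^{-1}\,dt$ underlying (\ref{log Lip}) when $\vertii{X}<\pi$ is not a strict constraint bound but is attained at the endpoint, and more importantly the behaviour as $\vertii{X}\to\pi$; but since the theorem only claims $0<\vertii{X}<\pi$ this is exactly the regime already handled in the computation leading to (\ref{exp equivalence}), so I would simply cite that. (In infinite dimensions one should also note that all the operators $e^{itX}$, $(t+e^{iX})^{-1}$ are bounded with norms uniform in $t$ on the relevant ranges, so the Bochner integrals converge in $\vertiii{-}$; for the operator norm this is immediate from the spectral calculus since $\sigma(X)\subset[-\vertii{X},\vertii{X}]\subset(-\pi,\pi)$.) Everything else is a direct chaining of already-established inequalities, so I expect no real obstacle beyond bookkeeping.
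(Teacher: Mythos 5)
Your argument is essentially identical to the paper's: the doubling reduction, passing to $U=e^{iX}$, the unitary-conjugation bound $\vertii{[f(A),U]}\le\Omega_{f,\mathfrak F}(\vertii{[A,U]})$, the two halves of the equivalence (\ref{exp equivalence}), and monotonicity of $\Omega_{f,\mathfrak F}$, followed by monotonicity of $t/\sin t$ on $(0,1]$ and the citation of (\ref{best operator and commutator ratio}) for sharpness of the constant $\csc(1)$. The only quibble is your justification that the first inequality is sharp "since equality is attained there" at $X=I$ — taking $X=I$ only shows $\Omega_{f,\mathfrak F}\le\Omega^\flat_{f,\mathfrak F}$, and sharpness requires exhibiting an $f$ with equality, e.g.\ $f(x)=x$ as the paper does.
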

\begin{proof}
We use the standard reduction of (\ref{doubling}) discussed in the introduction to assume that $X$ is self-adjoint and $A = B$.

Let $U = e^{iX}$. By the calculation in (\ref{unitary conjugation}), since $A$ and $U^\ast A U$ have spectrum in $\mathfrak F$, we have 
\[\vertii{[f(A), e^{iX}]} \leq \Omega_{f, \mathfrak F}(\vertii{[A,e^{iX}]}).\] 
So by (\ref{exp equivalence}), we see that
\begin{align*}
\|[f(A), X]\| &\leq \frac{\|X\|}{\sin(\|X\|)} \vertii{[f(A), e^{iX}]} \leq \frac{\|X\|}{\sin(\|X\|)} \Omega_{f, \mathfrak F}(\vertii{[A, e^{iX}]}) \\
&\leq \frac{\|X\|}{\sin(\|X\|)} \Omega_{f, \mathfrak F}(\vertii{[A, X]}),
\end{align*}
because $\Omega_f$ is monotonically increasing.

When $\|X\| \leq 1$, the inequality $\Omega^\flat_{f, \mathfrak F} \leq \csc(1)\Omega_{f, \mathfrak F}$ follows because $\frac{x}{\sin(x)}$ is strictly increasing on $[0, 1] \subset [0, \pi).$
The fact that the first inequality is strict follows from the trivial example of $f(x)=x$ and the fact that the second inequality is strict follows from (\ref{best operator and commutator ratio}).
\end{proof}
\begin{remark}
The method of transforming the self-adjoint $X$ into a unitary to prove an inequality of the form (\ref{C0Ineq}), as discussed in the beginning of this section, also proves an inequality for $\Omega_f \leq Const.\Omega^\flat_f$ by the same reasoning. So, one sees that it does not seem possible to reduce the constant in (\ref{BK ineq}) below $\csc(1)$ using this similar method. 

The same type of proof used to obtain (\ref{block unitary}) by embedding $X$ self-adjoint into a block of a larger unitary matrix also will prove estimates of the form $\Omega_f \leq Const.\Omega^\flat_f$. Hence if Conjecture \ref{Conjecture 0} is true for general unitarily-invariant norms or even if (\ref{C0Ineq}) is satisfied with a constant less than $\csc(1)$ then we should search for a proof that has some nuance that would not allow $Const.$ in $\Omega_f \leq Const.\Omega^\flat_f$ to be less than $\csc(1)$ for an arbitrary continuous function.
\end{remark}
\begin{remark}
Despite having \[\vertiii{[A,e^{iX}]} \leq \vertiii{[A,X]}\] for every unitarily-invariant norm, we do not necessarily have that there is a unitary $V$ so that $V^\ast|[A,e^{iX}]|V \leq |[A,X]|$ unless the matrices $X$ and $Y$ are $2 \times 2$.
So, one cannot conclude in a straightforward way that $\vertiii{f(|[A,e^{iX}]|)} \leq \vertiii{f(|[A,X]|)}$ by the operator monotonicity. This is what breaks down in an attempt to extend the argument of our proof to a general unitarily-invariant norms to prove
\begin{align*}
\vertiii{[f(A), X]}\leq \frac{\|X\|}{\sin(\|X\|)} \vertiii{f(|[A, X]|)}
\end{align*}
which based on our current knowledge may or may not be true in general.

To see how our proof fails to prove this result a little more clearly, let $Y = \bp 2& 4& 2 \\ 4& 2& 3 \\ 2& 3& 4\ep$, $A = \bp 5& 3&  3 \\ 3& 3& 3-i \\ 3& 3+i& 5\ep$, and $X = \frac1{\|Y\|}Y$. We have that the singular values of $[A,X]$ are 
$a_j = 1.7546, 1.7036, 0.0510$ and the singular values of $[A, e^{iX}]$ are $u_j= 1.6546, 1.6027, 0.0519$. (All calculations in this remark are rounded to the fourth decimal place.) Notice that the smallest singular value of $[A, e^{iX}]$ is greater than the smallest singular value of $[A,X]$. This is what disallows us from applying the operator monotonicity of $f$ for a general unitarily-invariant norm.

In fact, if $f(x) = x/(x+0.02)$, we see that 
\[\vertii{f(|[A,e^{iX}]|)}_{(3)} = \sum_{j=1}^3 f(u_j) = 2.6976> 2.6953=\sum_{j=1}^3 f(a_j)= \vertii{f(|[A,X]|)}_{(3)}.\]
So, the desired inequality fails for the Ky Fan norm $\vertii{-}_{(3)}$, which is also just the trace norm on $M_3(\C)$.
\end{remark}

As a corollary of the previous theorem we obtain the estimates for $f(x) = x^r$:
\begin{corollary}
Suppose that $A, B, X$ are as in Theorem \ref{sin(t) estimate}. 

Then for $r \in (0,1)$,
\[ \vertii{A^rX-XB^r} \leq \tilde\gamma(r)\|X\|^{1-r}\vertii{AX-XB}^r,\]
where $\tilde\gamma(r) = \min_{t \in (0,\pi)}\frac{t^r}{\sin(t)}$.
\end{corollary}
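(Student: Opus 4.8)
The plan is to deduce this directly from Theorem~\ref{sin(t) estimate} together with the $r$-homogeneity of the power function, so that the whole argument reduces to a scaling argument layered on top of the operator-norm estimate already in hand. First I would fix $\mathfrak F = [0,\infty)$ and $f(x) = x^r$, and record that $\Omega_{f,[0,\infty)}(\delta) = \delta^r$: the bound $\Omega_{f,[0,\infty)}(\delta) \le \delta^r$ is precisely Theorem~\ref{Kittaneh Kosaki} applied to $A,B \ge 0$, and the matching lower bound comes from taking scalars $A = \delta$, $B = 0$. The case $X = 0$ of the corollary is then trivial, so I would assume $X \ne 0$.

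Next, for a scaling parameter $t \in (0,\pi/\vertii{X})$ I would apply the estimate~(\ref{exp ix unitary C}) to the triple $(A,B,tX)$, whose middle slot has operator norm $t\vertii{X} \in (0,\pi)$ as the theorem requires. Using $\Omega_{f,[0,\infty)}(t\vertii{AX-XB}) = (t\vertii{AX-XB})^r$, this reads
\[\vertii{A^r(tX) - (tX)B^r} \le \frac{t\vertii{X}}{\sin(t\vertii{X})}\,\bigl(t\,\vertii{AX - XB}\bigr)^r .\]
Dividing through by $t$ gives
\[\vertii{A^rX - XB^r} \le \frac{t^r\vertii{X}}{\sin(t\vertii{X})}\,\vertii{AX - XB}^r .\]
Substituting $s = t\vertii{X}$, which sweeps out all of $(0,\pi)$ as $t$ ranges over $(0,\pi/\vertii{X})$ and turns $t^r\vertii{X}$ into $s^r\vertii{X}^{1-r}$, I obtain $\vertii{A^rX - XB^r} \le (s^r/\sin s)\,\vertii{X}^{1-r}\vertii{AX-XB}^r$ for every $s \in (0,\pi)$. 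Taking the infimum over $s$ produces the stated constant $\tilde\gamma(r) = \min_{t \in (0,\pi)} t^r/\sin t$; note this infimum is a genuine minimum attained in the interior, since $s^r/\sin s \to \infty$ both as $s \to 0^+$ and as $s \to \pi^-$.

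There is no real obstacle beyond bookkeeping. The only point that deserves a line of justification is that the rescaling in the last step does not artificially restrict the optimization: the constraint $\vertii{tX} < \pi$ needed to invoke Theorem~\ref{sin(t) estimate} is never active at the optimum, precisely because the optimal $s$ lies strictly inside $(0,\pi)$.
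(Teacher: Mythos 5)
Your proof is correct and follows essentially the same route as the paper: apply Theorem \ref{sin(t) estimate} to a rescaled $X$, use the Kittaneh--Kosaki identity $\Omega_{x^r}(\delta)=\delta^r$, and optimize over the scaling parameter (the paper normalizes $Y=X/\|X\|$ first, which is the same computation as your substitution $s=t\vertii{X}$). Your extra remark that the minimum of $s^r/\sin s$ is attained in the interior of $(0,\pi)$ is a nice touch but not needed for the bound.
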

\begin{proof} By Theorem \ref{Kittaneh Kosaki}, $\Omega_{x^r} = x^r$. We apply (\ref{exp ix unitary C}) with $X$ replaced by $tY$ where we assume that $\|Y\| = 1$  and $0 < t < \pi$.
This gives
\[t\vertii{A^rY-YB^r} \leq \frac{t}{\sin(t)}\Omega_{x^r}(t\vertii{AY-YB})= t\frac{t^r}{\sin(t)}\vertii{AY-YB}^r.\] So,
\[\vertii{A^rY-YB^r} \leq \frac{t^r}{\sin(t)}\vertii{AY-YB}^r.\]
For general $X$, the result then follows by applying this to $Y = X/\|X\|$.
\end{proof}

\begin{remark}
We briefly state here that we have proved that $\gamma_0(1/2) \leq 1.1748$ (using $t = 1.166$), which is a new estimate. We will obtain still better estimates later in the paper using our function approximation approach.
\end{remark}

\section{Reductions}\label{Reductions}
In this section we discuss reductions of the desired inequality
\begin{equation}\label{CI}
\vertiii{[f(A), X]} \leq Cf(\vertiii{[A,X]}), \;\; \vertiii{X} \leq 1.
\end{equation}
into a more manageable form.

Let $g \in \CL_{\vertiii{-}}([0, \vertii{A}])$. Then by the inequalities of Section \ref{Commutator Bounds},
\begin{align}
\vert\kern-0.65ex\vert\kern-0.25ex\vert[f(A),& X]\vert\kern-0.65ex\vert\kern-0.25ex \leq \vertiii{[(f-g)(A), X]} + \vertiii{[g(A), X]} \nonumber \\
&\leq \left(\sup_{0\leq x \leq \vertii{A}}(f-g)(x)-\inf_{0\leq x \leq \vertii{A}}(f-g)(x)\right)\vertiii{X} +  \vertiii{g}_{\CL([0, \vertii{A}])}\|[A,X]\|.\label{fund comm ineq}
\end{align}
Using $\vertiii{X} \leq 1$, we define
\begin{equation}\label{tilde E def}
\tilde{E}_{f}(c) = \inf_{g \in \CL_{\vertiii{-}}([0, \infty))} \left(\sup_{x \geq 0}(f-g)(x)-\inf_{x \geq 0}(f-g)(x) +  c\vertiii{g}_{\CL([0, \infty))}\right).
\end{equation}
Then we see that (\ref{CI}) is implied by
\begin{equation}\label{E eq}
\tilde{E}_{f}(c) \leq Cf(c),
\end{equation}
where $c = \|[A,X]\|$.

We state the result that this holds with $C = 1$ as a conjecture:
\begin{conj}\label{Conjecture 3}
Let $f$ be a non-negative function that is operator monotone on $[0, \infty)$. Then 
\[\tilde{E}_{f}(c) \leq Cf(c)\]
holds for all $c > 0$ when $C = 1$.
\end{conj}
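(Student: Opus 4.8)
The plan is to prove the (formally stronger) statement that $E_{f}(c) \le f(c)$ for all $c>0$ and all non-negative operator monotone $f$ — i.e.\ Conjecture \ref{Conjecture 2} — since this implies Conjecture \ref{Conjecture 3}: if $g' \in \wM_+$ then $\vertiii{g}_{\CL([0,\infty))} \le \|g'\|_{\wM} = g'(0)$ by Lemma \ref{FT lemma}, so $\tilde E_f(c) \le E_f(c)$. The first move is to collapse everything to a single scalar function. Using the integral representation (\ref{integral representation}), write $f = \alpha + \beta x + \int_0^\infty f_t\,d\nu(t)$ with $f_t(x) = x/(x+t)$. The affine part is harmless ($\tilde E_{\beta x}(c) \le \beta c$ via $g(x)=\beta x$, and $\alpha\ge 0$ only helps the right side), and $E_f$ behaves subadditively under the integral representation: oscillation is a seminorm and $\|\cdot\|_{\wM}$ a norm, so one may take $g = \int g_t\, d\nu(t)$. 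Thus it suffices to prove $E_{f_t}(c) \le C f_t(c)$ for each $t$; and the substitution $g(x) = h(x/t)$ (which preserves $\wM_+$ and scales $g'(0)$ by $1/t$) gives $E_{f_t}(c) = E_{f_1}(c/t)$ while $f_t(c) = f_1(c/t)$. So the whole conjecture — for every unitarily-invariant norm simultaneously — reduces to the single scalar estimate
\[
E_{f_1}(c) \;=\; \inf_{g' \in \wM_+}\Bigl(\ \sup_{x\ge 0}(f_1-g)(x)\ -\ \inf_{x\ge 0}(f_1-g)(x)\ +\ c\,g'(0)\ \Bigr)\ \le\ \frac{c}{c+1}, \qquad c>0,
\]
for $f_1(x) = x/(x+1)$.

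For the scalar estimate I would build the competitor $g$ out of Gaussians. A Gaussian has a non-negative Fourier transform, so any $g$ with $g'(x) = \sum_j a_j\,\tfrac{2b_j}{\sqrt{\pi}}\,e^{-b_j^2 x^2}$ with $a_j,b_j>0$ — equivalently $g(x) = \sum_j a_j\,\erf(b_j x)$ — lies in the admissible class, with $g'(0) = \sum_j 2a_jb_j/\sqrt{\pi}$. A single scaled error function already has the right qualitative shape: $\erf(bx)$ saturates to a constant just as $f_1$ approaches $1$, so $f_1 - a\,\erf(bx)$ can be made to oscillate little while $g'(0)$ stays controlled. With $g=g_c$ the problem becomes the one-dimensional inequality
\[
\Bigl(\sup_{x\ge 0}-\inf_{x\ge 0}\Bigr)\Bigl(\tfrac{x}{x+1} - g_c(x)\Bigr)\ +\ c\,g_c'(0)\ \le\ \frac{c}{c+1},
\]
and the task is to choose the parameters $(a_j(c),b_j(c))$ — ideally in closed form, otherwise as solutions of the stationarity conditions for the oscillation — so that this holds for all $c$ at once. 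The two degenerate regimes anchor the estimate: as $c\to 0$ one wants $g_c$ close to $x$ near the origin, paying roughly $c$ against $f_1(c)\approx c$; as $c\to\infty$ one wants $g_c$ small with $g_c'(0)=O(1/c)$, trading a slight reduction of the oscillation of $f_1$ against the deficit $1/(c+1)$ between $c/(c+1)$ and $1$. The interior $c$'s are where the balance between the ``trivial'' oscillation term and the commutator-Lipschitz term is genuinely delicate; note that neither term alone suffices, since $\min(c,1)$ is not $\le c/(c+1)$.

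The \emph{main obstacle} is exactly that last step: exhibiting a family of commutator-Lipschitz functions that drives the constant down to $C=1$ uniformly in $c$. There is no closed form for the optimal Gaussian parameters, and — as Theorem \ref{Main Thm} records — even a careful numerical optimization over single Gaussians yields only $C < 1.01975$; it is not clear the error-function ansatz can attain $C=1$ at all. Getting past this would plausibly require either a richer admissible family — letting $g'$ range over an arbitrary element of $\wM_+$ whose measure is tailored to $f_1$, i.e.\ actually solving the convex program defining $E_{f_1}(c)$ — or a duality argument: the map $g \mapsto \bigl(\sup_{x\ge 0}-\inf_{x\ge 0}\bigr)(f_1-g) + c\,g'(0)$ is convex, so by convex duality $E_{f_1}(c)$ equals a supremum over a suitable class of measures, and producing matched extremal objects on the two sides could pin the value down exactly. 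A reassuring sanity check is that $C=1$ is already best possible at this level: for $f(x)=x$ one has $\tilde E_f(c) = c$, attained at $g(x)=x$, so no argument of this shape can beat $C=1$.
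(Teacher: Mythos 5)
The statement you were asked to prove is Conjecture \ref{Conjecture 3}, and the paper does not prove it either: it is stated as an open conjecture, and the best the paper achieves along the route you describe is $C<1.01975$ (Theorem \ref{Main Thm}). Your proposal is therefore not a proof, and you say so yourself — the ``main obstacle'' you identify (exhibiting a family in $\wM_+$ that drives the constant to $1$ uniformly in $c$) is exactly the open problem. So the verdict is: genuine gap, namely the entire final step; no amount of polishing the reduction closes it. Everything up to that point reproduces the paper's own framework: the passage from $\tilde E_f$ to $E_f$ via Lemma \ref{FT lemma}, the reduction to $f_1(x)=x/(x+1)$ via the integral representation and the scaling $f_t(x)=f_1(x/t)$ (the paper's Lemmas \ref{f_t lemma} and \ref{f_1 lemma}), and the $\erf$/Gaussian ansatz of Section \ref{Proof of Main Theorem}. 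One small but real difference: you carry out the reduction to $f_1$ at the level of the scalar functional $E_f$ (using subadditivity of the oscillation and linearity of $g\mapsto g'(0)$ under $g=\beta x+\int t\,g_t\,d\nu(t)$), whereas the paper reduces at the operator level in (\ref{f_t reduction}) and only then passes to the functional. Your version is fine in spirit but needs a word on why $\int t\,g_t\,d\nu(t)$ still has derivative in $\wM_+$ with $\int t\,g_t'(0)\,d\nu(t)<\infty$; this is harmless if one takes $g_t'(0)\lesssim f_t'(0)=1/t$, since $\nu$ is finite.

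Two further remarks. First, your sanity check that $C=1$ cannot be beaten is correct but slightly understated: for $f(x)=x$ one needs the lower bound $\tilde E_x(c)\ge c$, which follows because any $g$ with finite oscillation of $x-g$ on $[0,\infty)$ must have $\vertiii{g}_{\CL}\ge\|g\|_{\Lip}\ge 1$. Second, your suggested way forward — convex duality for the program defining $E_{f_1}(c)$, or enlarging the competitor class beyond single Gaussians — is reasonable and is not carried out in the paper, but until matched primal and dual objects are actually produced, Conjecture \ref{Conjecture 3} remains unproved.
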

\begin{remark}
Note that the estimate of breaking a function $f$ of $A$ into a term which gets a trivial bound and a term that gets an operator Lipschitz bound is done by Loring and Vides \cite{Loring Vides} in the case of a power series and $\|A\| \leq 1$, drawing on the power series methods of earlier papers.

Our approach  we wish to have $A \geq 0$ general and because we wish to approximate $f(x)$ by a function whose operator Lipschitz function norm is 
\end{remark}

In general $\vertiii{g}_{\CL(\R)} \leq \|g'\|_{\wM}$. We could make the assumption on $g$ that $g' \in \wM$ to obtain an upper bound for $\tilde{E}_f(c)$. We will actually restrict the choice of $g$ further to $g' \in \wM_+$ which has many concrete examples.

\vspace{0.1in}

If we make further assumptions on $g$ and $f-g$ then the expression in the infimum of the definition for $\tilde{E}_f(c)$ simplifies.
Suppose that $g$ is $C^1(\R)$ and $g'$ is the Fourier transform of a finite positive Borel measure. Then by Lemma \ref{FT lemma},
\begin{equation}\label{E g}
\tilde{E}_{f}(c) \leq E_{f}(c) := \inf_{g' \in \wM_+} \left(\sup_{x \geq 0}(f-g)(x) - \inf_{x \geq 0}(f-g)(x) +  cg'(0)\right).
\end{equation}
With these definitions, we can see that Conjecture \ref{Conjecture 2} implies Conjecture \ref{Conjecture 3} implies Conjecture \ref{Conjecture 1}.

\vspace{0.1in}

Suppose in addition to one of the above requirements, we also choose $g$ so that $f(x)-g(x) \geq 0$ on $[0, \infty)$. Then 
\begin{equation}\label{E g f-g}
E_{f}(c) = \inf_{\substack{g \in \wM_+ \\ (f - g) |_{[0,\infty)}\geq 0 } } \left(\sup_{[0, \infty)}(f-g) +  cg'(0)\right).
\end{equation}

An equivalent formulation of $E_f(c)$ is what we will call the derivative perspective. Fix a function $G$ that will be $g'$ and let $F = f'$.  We assume that $G\in \wM_+$. Without loss of generality, $f(0) = 0$ and we define $g(0)=0$ so that $f(x)=\int_0^x F(t)dt$ for $x \geq 0$ and $g(x)=\int_0^x G(t)dt$.  We can always shift $g$ afterward to make it more aesthetically or theoretically appealing, if we wish.

For simplicity, let $\P_f$ denote the set of functions $G$ with these conditions. Then,
\begin{equation}\label{E diff per}
E_{f}(c) = \inf_{G \in \P_f} 
\left( \sup_{x\geq 0}\int_0^x (F(t)-G(t)) dt +  cG(0)\right).
\end{equation}
The derivative representation in  (\ref{E diff per}) of $E_{f}(c)$, the standard representation in (\ref{E g}), or some hybrid of the two will be useful when defining approximate minimizer functions $g$ in the infimum of the functional defining $E_f(c)$. They will also be useful when calculating the obtained an upper bound for $E_f(c)$ from this $g$.

Note that $f - g \geq 0$ on $[0, \infty)$ converts into the majorization condition of $F$ and $G$: $\int_0^x (F(t)-G(t)) dt \geq 0$ for all $x \geq 0$.
\vspace{0.1in}

We now specialize to the case of operator monotone functions on $[0, \infty)$. 
If $f(x)$ is operator monotone on $[0, \infty)$ then it has the integral expression (written in the form used in \cite{BK-commutators}) as a combination of functions of the form $f_{t}(x) = \frac{x}{x+t}$:
\[f(x) = \alpha + \beta x + \int_0^\infty \frac{xt}{x+t} d\nu(t).\]
with $\alpha, \beta \geq 0$ and $\nu$ a finite positive Borel measure on $[0, \infty)$ so we will assume that $t \geq 0$ in the above integral.

Suppose that (\ref{CI}) holds for each $f_t$. Then
\begin{align}\nonumber
\vertiii{[f(A),X]}  &= \vertiii{\left[\alpha + \beta A + \int_0^\infty tf_t(A) d\nu(t), X\right]} \leq \beta\vertiii{[A,X]} + \int_0^\infty t\vertiii{[f_t(A),X]} d\nu(t) \\
&\leq C\alpha+ C\beta\vertiii{[A,X]} +C\int_0^\infty tf_t(\vertiii{[A,X]})d\nu(t) = Cf(\vertiii{A,X]}), 
\label{f_t reduction}
\end{align}
since $C \geq 1$. Then (\ref{CI}) holds for $f$.
So, it suffices to show (\ref{CI}) for $f_t(x)$ for each $t$.

We now investigate (\ref{CI}) for $f_t(x)$ a bit more. Note that
\[ \vertiii{[f_t(A), X]} \leq Cf_t(\vertiii{[A, X]})\]
means
\[ \vertiii{[A(A+t)^{-1}, X]} \leq C\frac{\vertiii{[A, X]}}{\vertiii{[A, X]}+t}\]
which is equivalent to
\begin{equation}\label{f_t scaling} 
\vertiii{[f_{at}(aA), X]} \leq Cf_{at}(\vertiii{[aA,X]})
\end{equation}
for any $a > 0$.
So, (\ref{CI}) for $f= f_t$ and all $A \geq 0$ is equivalent to (\ref{CI}) for $f=f_{at}$ for all $A \geq 0$. 
So, it suffices to show (\ref{CI}) for all values of $c = \vertiii{[A,X]}$ for $f_1(x) = \frac{x}{x+1}$.

We summarize this as two lemmas.
\begin{lemma}\label{f_t lemma}
Let $\H$ be a Hilbert space, $f_t(x) = \frac{x}{x+t}$, $C \geq 1$, and $X, A_0\in B(\H)$ with $A_0 \geq 0$.
Then the following are equivalent.
\begin{enumerate}[label = (\roman*)]
\item
$\displaystyle\vertiii{[f_1(A),X]} \leq C f_1\left(\vertiii{[A,X]}\right)$ for all $A = sA_0, s \geq 0$.
\item
$\displaystyle\vertiii{[f_t(A),X]} \leq C f_t\left(\vertiii{[A,X]}\right)$ for all $A = sA_0, s \geq 0$.
\end{enumerate}
\end{lemma}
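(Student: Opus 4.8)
The plan is to prove the equivalence by a pure rescaling argument, the one already indicated by (\ref{f_t scaling}). The computational input is the scalar identity $f_1(x/t) = \frac{x/t}{x/t+1} = \frac{x}{x+t} = f_t(x)$ for $x \geq 0$, which by the functional calculus gives $f_1(A/t) = f_t(A)$ for every $A \geq 0$, together with the homogeneity $\vertiii{[A/t, X]} = t^{-1}\vertiii{[A, X]}$ of the commutator. Reading these the other way, $f_t(tB) = f_1(B)$ and $\vertiii{[tB, X]} = t\,\vertiii{[B, X]}$. So substituting $A/t$ for the argument of $f_1$ converts an instance of (i) into an instance of (ii), and substituting $tB$ for the argument of $f_t$ does the reverse; the only thing to check is that the substituted operators stay in the allowed family $\{sA_0 : s \geq 0\}$, which they do since a positive scalar multiple of $sA_0$ is again of that form.

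First I would carry out (i) $\Rightarrow$ (ii). Fix $A = sA_0$ with $s \geq 0$ and set $A' := A/t = (s/t)A_0$, a non-negative scalar multiple of $A_0$; if $[A,X] = 0$ (in particular if $s = 0$) the desired inequality is trivial, so assume otherwise. Applying (i) to $A'$ gives $\vertiii{[f_1(A'), X]} \leq C\,f_1(\vertiii{[A', X]})$. Substituting $f_1(A') = f_t(A)$, $\vertiii{[A', X]} = t^{-1}\vertiii{[A,X]}$, and then $f_1(t^{-1}\vertiii{[A,X]}) = f_t(\vertiii{[A,X]})$, this is exactly $\vertiii{[f_t(A), X]} \leq C\,f_t(\vertiii{[A,X]})$, i.e.\ the conclusion of (ii).

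The converse (ii) $\Rightarrow$ (i) is the mirror image: given $A = sA_0$, set $A'' := tA = (ts)A_0$, apply (ii) to $A''$, and use $f_t(A'') = f_1(A)$, $\vertiii{[A'', X]} = t\,\vertiii{[A,X]}$, and $f_t(t\,\vertiii{[A,X]}) = f_1(\vertiii{[A,X]})$ to recover the conclusion of (i). I do not anticipate any real obstacle; this lemma is entirely bookkeeping with the scaling $f_t(\lambda x) = \lambda x/(\lambda x + t)$. Note in passing that the hypothesis $C \geq 1$ is not used in the proof of this lemma — it is recorded only because it is needed in the companion reduction through the integral representation (\ref{integral representation}) — and so could be omitted from the statement.
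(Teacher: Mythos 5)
Your proposal is correct and is essentially the paper's own argument: the paper establishes the lemma via the scaling identity (\ref{f_t scaling}), i.e.\ $f_{at}(aA)=f_t(A)$ and $f_{at}(\vertiii{[aA,X]})=f_t(\vertiii{[aA,X]})/1$ reduces to $f_t$ evaluated at $\vertiii{[A,X]}$, which with $a=t$ (resp.\ $a=1/t$) is exactly your substitution $A\mapsto tA$ (resp.\ $A\mapsto A/t$). Your added remarks — that the rescaled operators stay in the ray $\{sA_0: s\geq 0\}$ and that $C\geq 1$ is not used here — are accurate.
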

\begin{lemma}\label{f_1 lemma}
Let $f_1(x) = \frac{x}{x+1}$ and $C \geq 1$. Let $X \in B(\H)$. 

Then if
\begin{equation}\label{f1 comm ineq}
\vertiii{[f_1(A),X]} \leq C f_1\left(\vertiii{[A,X]}\right)
\end{equation}
for all $A \in B(\H)$ with $A \geq 0$ then
\[\vertiii{[f(A),X]} \leq C f\left(\vertiii{[A,X]}\right)\] for all $A \in B(\H)$ with $A \geq 0$ and every non-negative operator monotone function $f$ on $[0, \infty)$.
\end{lemma}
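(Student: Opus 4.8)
The plan is to combine Lemma \ref{f_t lemma} with the integral representation of operator monotone functions exactly as in the computation \eqref{f_t reduction} that precedes the statement. First I would assume \eqref{f1 comm ineq} holds for the fixed $X$ and every $A \geq 0$. Applying Lemma \ref{f_t lemma} with $A_0 = A$ and $s = 1$ (so that the hypothesis (i) of that lemma is exactly \eqref{f1 comm ineq} ranging over all $A \geq 0$, rather than over a single ray), I obtain
\begin{equation}\label{ft consequence}
\vertiii{[f_t(A),X]} \leq C f_t\!\left(\vertiii{[A,X]}\right)
\end{equation}
for every $t > 0$ and every $A \geq 0$. The minor point to check here is that Lemma \ref{f_t lemma} is phrased for a single operator $A_0$ and its positive multiples $sA_0$; since we are assuming \eqref{f1 comm ineq} for \emph{all} $A \geq 0$, in particular it holds for all $sA_0$ with any $A_0 \geq 0$, so the equivalence delivers \eqref{ft consequence} for all $A\geq 0$ as claimed. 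I would also note that the $t=0$ case, $f_0(x) = 1$, is trivial since $[f_0(A),X] = 0$, and the linear term is handled directly because $\vertiii{[\beta A,X]} = \beta\vertiii{[A,X]} \leq C\beta\vertiii{[A,X]}$ using $C \geq 1$.

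Next I would write a general non-negative operator monotone $f$ on $[0,\infty)$ via Theorem \ref{Loewner}(iii),
\[
f(x) = \alpha + \beta x + \int_0^\infty \frac{xt}{x+t}\, d\nu(t) = \alpha + \beta x + \int_0^\infty t\, f_t(x)\, d\nu(t),
\]
with $\alpha,\beta \geq 0$ and $\nu$ a finite positive Borel measure on $[0,\infty)$. Substituting $A$ and commuting with $X$, the constant $\alpha$ drops out of the commutator, the linear term contributes $\beta[A,X]$, and the integral term can be differentiated under the integral sign / handled by a Bochner-integral estimate to give $\vertiii{[f(A),X]} \leq \beta\vertiii{[A,X]} + \int_0^\infty t\,\vertiii{[f_t(A),X]}\, d\nu(t)$. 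Now apply \eqref{ft consequence} inside the integral and use $C \geq 1$ on the linear term, so that
\[
\vertiii{[f(A),X]} \leq C\alpha + C\beta\vertiii{[A,X]} + C\int_0^\infty t\, f_t\!\left(\vertiii{[A,X]}\right) d\nu(t) = C f\!\left(\vertiii{[A,X]}\right),
\]
which is precisely the desired conclusion. This is the same chain of inequalities displayed in \eqref{f_t reduction}.

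The only genuinely non-routine step is justifying that the norm of the commutator of the operator-valued integral is bounded by the integral of the norms of the commutators — i.e. that $\bigl[\int_0^\infty t f_t(A)\, d\nu(t),\, X\bigr] = \int_0^\infty t\,[f_t(A),X]\, d\nu(t)$ and that the triangle inequality passes through the integral. For $\H$ finite dimensional this is immediate (everything is a continuous $M_n(\C)$-valued integrand against a finite measure, and $0 \leq f_t(A) \leq I$ gives the needed domination), and for the operator norm on general $\H$ it follows from the fact that $A \mapsto A(A+t)^{-1}$ is operator-norm continuous in $t$ on $(0,\infty)$ together with dominated convergence for the Bochner integral; near $t = 0$ one uses $\|t f_t(A)\| \leq \|A\|$ and near $t=\infty$ one uses $\|t f_t(A)\| \leq \|A\|$ again (indeed $\|f_t(A)\|\le 1$), both integrable against the finite measure $d\nu$. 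I would state this justification briefly and then conclude. If one wishes to be slightly more careful about the case $\vertiii{[A,X]} = 0$, note that then $[f(A),X] = 0$ as well (e.g. $f$ is a uniform limit of polynomials in $A$ on the spectrum, or directly from the integral bound), so the inequality is trivial; otherwise $f_t$ evaluated at the positive number $\vertiii{[A,X]}$ is unambiguous and the computation above is literal.
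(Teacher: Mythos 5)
Your proposal is correct and follows the paper's own route exactly: the scaling equivalence of Lemma \ref{f_t lemma} (i.e.\ \eqref{f_t scaling}) upgrades the hypothesis for $f_1$ to all $f_t$, and then the integral representation computation \eqref{f_t reduction} with $C\geq 1$ absorbing the $\alpha$ and $\beta$ terms gives the conclusion. The extra care you take with interchanging the commutator and the Bochner integral and with the degenerate cases $t=0$ and $[A,X]=0$ is fine (note only that the integrand $t f_t$ vanishes at $t=0$, so that point mass is harmless regardless of how one interprets $f_0$).
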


We also show the following two lemmas whose purpose is to show some continuity of our bounds for $f_1$. This will allow us to use upper bounds for the smallest constant $C$ that satisfy $E_{f_1}(c) \leq Cf_1(c)$ for a finite number of values of $c$ to prove (\ref{f1 comm ineq}) for all $c$ in an interval at the expense of increasing the constant from $C$ to $D >C$ for the interpolated bounds.
\begin{lemma}\label{C continuity}
Let $f_1(x) = \frac{x}{x+1}$. Suppose that for some $c > 0$, there is a constant $C \geq 1$ such that
\begin{equation}\label{C continuity - C}
\vertiii{[f_1(A),X]} \leq C f_1\left(\vertiii{[A,X]}\right)
\end{equation}
for all $A, X \in B(\H)$ with $A \geq 0$, $\vertiii{X} \leq 1$, and $\vertiii{[A,X]} = c$. 

For $d > c$, let
\[D = C\cdot\left(\frac{d+1}{c+1}\right).\]
Then
\begin{equation}\label{C continuity - D}
\vertiii{[f_1(A),X]} \leq D f_1\left(\vertiii{[A,X]}\right)
\end{equation}
for all $A, X \in B(\H)$ with $A \geq 0$, $\vertiii{X} \leq 1$, and $\vertiii{[A,X]} = d$.
\end{lemma}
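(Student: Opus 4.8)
The plan is to reduce the claimed inequality, for a fixed $c > 0$ with $\vertiii{[A,X]} = c$, to the already-assumed inequality (\ref{C continuity - C}) at some nearby value. First I would set $c = \vertiii{[A,X]}$ (so that $c \leq d$; if $c = d$ the statement is trivial) and observe that the quantity we want to bound is $\vertiii{[f_1(A),X]}$, which by hypothesis we only control when $\vertiii{[A,X]}$ is exactly $c$ (the fixed value in the lemma statement), not at the new value $d$. The point is therefore \emph{not} to rescale $A$, but rather to compare two evaluations of $f_1$ and exploit that $f_1$ is increasing: since $\vertiii{[A,X]} = d$, we have $f_1(\vertiii{[A,X]}) = f_1(d) = d/(d+1)$.

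Reading the statement more carefully, the cleanest route is: given $A, X$ with $\vertiii{[A,X]} = d$, scale down to $A' = (c/d) A$ so that $\vertiii{[A',X]} = (c/d)\vertiii{[A,X]} = c$, apply the hypothesis to get $\vertiii{[f_1(A'),X]} \leq C f_1(c)$, and then relate $\vertiii{[f_1(A),X]}$ to $\vertiii{[f_1(A'),X]}$. The subtlety is that $f_1(A) \ne f_1(A')$ in general; however, $f_1(x) = x/(x+1)$ and $f_1((d/c)y)/f_1(y) = \frac{(d/c)y}{(d/c)y+1}\cdot\frac{y+1}{y} = \frac{y+1}{(c/d)y+1}$, which for $y \geq 0$ is bounded above by $\lim_{y\to\infty}\frac{y+1}{(c/d)y+1} = d/c$ (and the ratio is increasing in $y$). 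This means that, on the level of functions of a positive operator, we have the pointwise operator inequality $f_1((d/c)A') \leq (d/c) f_1(A')$ for $A' \geq 0$ — but a pointwise operator bound on $f_1(A)$ itself does not immediately transfer to a bound on the commutator $[f_1(A),X]$.

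So the honest argument must instead compare the commutator Lipschitz behavior directly: one writes $f_1\bigl(\tfrac{d}{c}y\bigr) = \tfrac{d}{c}\,h(y)$ where $h(y) = f_1(y)\cdot\frac{(c/d)y + 1}{\,y+1\,}$... this is getting circular. The right and simplest approach is the one the scaling lemma (Lemma \ref{f_t lemma}) already encodes: the inequality (\ref{f1 comm ineq}) for $f_1$ and $A$ is, by (\ref{f_t scaling}), equivalent to the same inequality for $f_t$ and $A/t$ for any $t$. Concretely, from $\vertiii{[f_1(A),X]} \leq C f_1(c)$ valid whenever $\vertiii{[A,X]} = c$, substituting $A \mapsto \tfrac{c}{d}A$ gives: whenever $\vertiii{[A,X]} = d$, $\vertiii{[f_1(\tfrac{c}{d}A),X]} \leq C f_1(c)$, i.e. $\vertiii{[f_{d/c}(A),X]} \leq C \cdot \tfrac{c}{d}\cdot\tfrac{d}{c+1}\cdot\frac{1}{\,\cdot\,}$ — and then one uses the operator monotonicity comparison $f_1(A) - f_1(\tfrac{c}{d}A)$ is itself operator monotone... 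The main obstacle is exactly this last transfer step: controlling $\vertiii{[f_1(A),X]}$ in terms of $\vertiii{[f_{c/d}(A),X]}$.

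I would finish by using the identity $f_1(x) - f_{c/d}(x) = \frac{x}{x+1} - \frac{x}{x+c/d}$, which (since $c/d < 1$) is a \emph{negative} multiple of an operator monotone function of the form $\frac{x\cdot s}{x+s}$ up to sign — more usefully, $f_1 - f_{c/d} = (f_{c/d} - f_1)\cdot(-1)$ and $f_{c/d}(x) = 1 - \frac{c/d}{x + c/d}$, $f_1(x) = 1 - \frac{1}{x+1}$, so $f_1(x) - f_{c/d}(x) = \frac{c/d}{x+c/d} - \frac{1}{x+1} = \frac{(c/d)(x+1) - (x+c/d)}{(x+c/d)(x+1)} = \frac{(c/d - 1)x}{(x+c/d)(x+1)} \leq 0$. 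Thus $f_1(A) \leq f_{c/d}(A)$ as operators, and moreover $f_{c/d}(A) - f_1(A) = \frac{(1-c/d)A}{(A+c/d)(A+1)} \geq 0$ is a bounded operator-monotone function of $A$ with range in $[0, 1 - c/d)$ — actually its supremum over $x\geq 0$ is attained in the limit and equals... one checks $\frac{(1-c/d)x}{(x+c/d)(x+1)}$ has maximum value strictly less than $1 - c/d$. Then by the trivial bound of Section \ref{Commutator Bounds}, $\vertiii{[(f_{c/d} - f_1)(A), X]} \leq (1 - c/d)\vertiii{X} \leq 1 - c/d$, and combining,
\[
\vertiii{[f_1(A),X]} \leq \vertiii{[f_{c/d}(A),X]} + (1 - c/d).
\]
The first term, by the scaled hypothesis, is $\leq \tfrac{c}{d}\cdot\vertiii{[f_1(\tfrac{c}{d}A),X]}\cdot(\tfrac{d}{c})$... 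I would then arithmetically combine these with $f_1(d) = d/(d+1)$ and verify that the total is $\leq C\cdot\frac{d+1}{c+1}\cdot\frac{d}{d+1} = C\frac{d}{c+1} = D f_1(d)$, which pins down the constant $D = C(d+1)/(c+1)$ exactly as claimed. The main work — and the only place requiring care — is checking that these elementary pieces assemble to give precisely the stated $D$, with no slack lost; I expect the bound on $\sup_{x\geq 0}(f_{c/d}-f_1)(x)$ to be the one computation worth doing explicitly.
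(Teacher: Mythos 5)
There is a genuine gap: you never find the one-line argument the lemma is built on, and the route you sketch instead does not close. The key observation is that the hypothesis only requires $\vertiii{X}\leq 1$, so one should rescale $X$, not $A$. Given $A,X$ with $\vertiii{[A,X]}=d$, set $Y=\tfrac{c}{d}X$; then $\vertiii{Y}\leq \tfrac{c}{d}<1$ and $\vertiii{[A,Y]}=c$, so the hypothesis applies to the pair $(A,Y)$ with the \emph{same} operator $f_1(A)$ — no comparison of $f_1(A)$ with $f_1(\tfrac{c}{d}A)$ is ever needed. Since $[f_1(A),X]=\tfrac{d}{c}[f_1(A),Y]$, one gets $\vertiii{[f_1(A),X]}\leq \tfrac{d}{c}\,C f_1(c)=C\tfrac{d}{c+1}=C\tfrac{d+1}{c+1}f_1(d)$, which is exactly $D f_1(d)$. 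You explicitly reject "rescaling" at the outset and commit to scaling $A$, which creates the difficulty you then spend the rest of the proposal fighting.

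The fallback you sketch cannot be repaired to give the stated constant. First, there is an index confusion: the scaled hypothesis controls $[f_1(\tfrac{c}{d}A),X]=[f_{d/c}(A),X]$, but you estimate the difference $f_1-f_{c/d}$ rather than $f_1-f_{d/c}$. Second, and more seriously, even with the correct index the additive error from the trivial bound is $\sup_{x\geq 0}\bigl(f_1(x)-f_{d/c}(x)\bigr)=\frac{\sqrt{d/c}-1}{\sqrt{d/c}+1}$, and for the conclusion one would need this to be at most $C\frac{d-c}{c+1}$. This fails badly when $c$ and $d$ are both small with $d/c$ bounded away from $1$: for $c=0.01$, $d=0.02$, $C=1$, the error term is about $0.17$ while the available slack is about $0.01$. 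So the decomposition-plus-trivial-bound strategy provably cannot deliver $D=C\frac{d+1}{c+1}$; the lemma genuinely requires the $X$-scaling argument.
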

\begin{proof}
Let $A, X$ satisfy the conditions with commutator having norm $d$.
Let $Y = \frac{c}{d}X$. Then $\vertiii{[A,Y]} = c$ and $\vertiii{Y} \leq \frac{c}{d} < 1$. So, we apply the given to obtain
\begin{align*}
\vertiii{[f_1(A),X]} &= \frac{d}{c}\vertiii{[f_1(A),Y]} \leq C\cdot\left(\frac{d}{c}\right) f_1\left(\vertiii{[A,Y]}\right) \\
&= C\cdot\left(\frac{d}{c}\right)\left(\frac{c}{c+1}\right)\left(\frac{d+1}{d}\right)f_1(d) \\
&= C\cdot\left(\frac{d+1}{c+1}\right)f_1(\vertiii{[A,X]}).
\end{align*}

\end{proof}

An immediate consequence of this is the following result that we will use in the MATLAB calculations for our main theorem.
\begin{lemma}\label{C continuity points}
Let $f_1(x) = \frac{x}{x+1}$ and $0 < c_1 < \cdots < c_n$. Suppose that there are constants $C_k > 0$ such that
\begin{equation}\label{C continuity - C points}
\vertiii{[f_1(A),X]} \leq C_k f_1\left(\vertiii{[A,X]}\right)
\end{equation}
for all $A, X \in B(\H)$ with $A \geq 0$, $\vertiii{X} \leq 1$, and $\vertiii{[A,X]} = c_k$. 

Let ${\Delta}c = \max_k(c_{k+1}-c_k)$, $c_{n+1}:= c_n+{\Delta}c$, and
\[D_k = C_k\left(\frac{c_{k+1}+1}{c_k+1}\right) \leq C\left(\frac{c_1+ {\Delta}c+1}{c_1+1}\right) =: D.\]
Then for $d \in [c_k, c_{k+1}]$,
\begin{equation}\label{C continuity - D interval}
\vertiii{[f_1(A),X]} \leq D_k f_1\left(\vertiii{[A,X]}\right)
\end{equation}
for all $A, X \in B(\H)$ with $A \geq 0$, $\vertiii{X} \leq 1$, and $\vertiii{[A,X]} = d$.
\end{lemma}
\begin{remark}\label{D_k remark}
The examples that we will see in later sections will have $C_k$ close to $1$ for $c_k$ close to $0$ and $C_k$ growing until some value of $c$. See Figure \ref{piecewiseQuadraticMin_graph} and Figure \ref{ErfMin_graph1_updated}.  Also, the factor $\frac{c_{k+1}+1}{c_k}$ is largest for $k = 1$ assuming $c_{k+1}-c_k$ constant.

So, in such a case it is optimal to use the bounds $D_k$ because $\max_k D_k$ will be smaller than $D$.
\end{remark}

\vspace{0.1in}

Now let us consider the special case of $f(x) = x^r$. This will provide some useful new examples of using our method.
The nice simplifying step is to notice that the inequality
\[\vertiii{[A^r, X]} \leq \gamma(r)\vertiii{[A,X]}^r\]
is homogeneous in $A \geq 0$. So although we are not free to scale $X$ since $\vertiii{X} \leq 1$ is required, we are free to scale $A$ so as to require that $c=\vertiii{[A,X]} = 1$. So, we need only show (\ref{E eq}) for $c = 1$. 

So, we need only find a single function $G$ so that the right-hand side of 
\begin{equation}\label{E diff per x^r}
\gamma(r) \leq E_{x^r}(1) =\inf_{G \in \P_{x^r}} 
\left( \sup_{x\geq 0}\int_0^x (rt^{r-1}-G(t)) dt +  G(0)\right).
\end{equation}
can be made as small as we can make it.
Note that $F(t) = rt^{r-1}$ is unbounded as $t \to 0^+$, non-negative, convex, and decreasing to zero. It does not belong to $L^1((0, \infty))$ because $f(t) = t^r$ is not bounded as $t \to \infty$.
 
\section{Examples}\label{Examples}

In this section we provides some examples of our method. They illustrate some of the techniques that we will be using in the proof of the Main Theorem. However, only Example \ref{simple example} and Example \ref{x/(x+1) shift} are explicitly used. They are used to deal with corner cases in the same way that Example \ref{f_1 quadratic} uses Example \ref{simple example}. 

\begin{example}\label{simple example}
Note that $f(x) = f_1(x)=x/(x+t)$ has derivative $f'(x) = 1/(x+1)^2$. Extending $f(x)$ to $\R$ as an odd function then shows that $f'$ satisfies the conditions of Lemma \ref{convex derivative lemma}. So, $f$ has operator Lipschitz norm of $f_1'(0) = 1$. Because $0 \leq f \leq 1$, we then see that by choosing $g  = 0$ or $g = f$ we obtain
\[\vertiii{[f(A),X]} \leq \min(\vertiii{[A,X]}, 1).\]
Now, $\min(c,1) \leq 2f(c)$ for all $c > 0$.
So, we obtain 
\[\vertiii{[f_1(A),X]} \leq 2f_1(\vertiii{[A,X]}),\]
which provides $C = 2$ for a general operator monotone function by Lemma \ref{f_1 lemma}. This is the simplest bound we obtain. 
\end{example}

\begin{example}\label{Boyadzhiev example}
Boyadzhiev in \cite{Boyadzhiev} produced a proof of the inequality in \ref{simple example} (with $A, B, X$ belonging to a more general Banach algebra) under certain assumptions on the measure $\nu$ and an integral representation argument. 

However, despite the method in \cite{Boyadzhiev} being more complicated than necessary for this case, it does provide a stronger bound for $f(x) = x^r$.

We illustrate how the method employed actually is an example of the method using $E_f(c)$. We first recount the method done in that paper.
It involved writing the integral representation for $A^r$, breaking the integral into two pieces, then applying the trivial bound and a commutator bound appropriately on the two parts, then optimizing the breaking point.

We illustrate this  using a slightly different representation of $x^{1/2}$ that has a more familiar integral representation. Because this integral representation can be gotten by a strictly increasing change of variable from the representation of that paper, our presentation does not change the method or result.

Write 
\begin{equation}\label{sqrt int rep}
A^{1/2} = \frac1\pi \int_0^\infty A(A+t^2)^{-1}dt.
\end{equation}
Then the calculation proceeds by letting $s > 0$ and writing
\begin{align*}
[A^{1/2}, X] &= \frac2\pi \int_0^\infty [A(A+t^2)^{-1}, X]dt \\
&= \frac2\pi \int_0^s [A(A+t^2)^{-1}, X]dt + \frac2\pi \int_s^\infty [A(A+t^2)^{-1}, X]dt.
\end{align*}
We then apply the trivial bound to the first term $\|[A(A+t^2)^{-1}, X]\| \leq 1$. For the second term, we write $\frac{x}{x+t^2} = 1 - \frac{t^2}{x+t^2}$ and so
\begin{equation}\label{inv comm}
\vertiii{[A(A+t^2)^{-1}, X]} = \vertiii{t^2[(A+t^2)^{-1}, X]} \leq t^{-2}\vertiii{[A, X]} = \frac{c}{t^2}
\end{equation}
and hence
\begin{align*}
\vertiii{[A^{1/2}, X]} &\leq  \frac2\pi \int_0^s dt + \frac2\pi \int_s^\infty \frac{c}{t^2}dt = \frac{2}{\pi}\left(s + \frac{c}s\right),
\end{align*}
where $c = \vertiii{[A, X]}$.
Choosing the optimal value of $s = c^{1/2}$ provides
\[\vertii{[A^{1/2}, X]} \leq \frac4\pi \vertii{[A, X]}^{1/2}.\]
This provides the constant $\gamma(1/2) \leq \frac4\pi \approx 1.2732$.

We will now dig into this a little bit more. First, note that (\ref{inv comm}) is an immediate consequence of what we stated earlier with the operator Lipschitz bound of $f_{t^2}(x)$ because $f'_{t^2}(0) = t^{-2}$.
However, what we will now see is that the entire integral decomposition fits exactly within our function approximation paradigm.

The function $g(x) = \frac2\pi \int_s^\infty \frac{x}{x+t^2}dt$ satisfies the conditions of Lemma \ref{convex derivative lemma}. We can verify this by differentiating under the integral sign: $g'(x) = \frac2\pi \int_s^\infty \frac{t^2}{(x+t^2)^2}dt$. Seeing that higher derivatives pass through directly, we conclude.
We also note that 
\begin{align*}
g(x) &= \frac2\pi \int_s^\infty \frac{1}{1+(x^{-1/2}t)^2}dt =\frac2\pi x^{1/2}\int_{x^{-1/2}s}^\infty \frac{1}{1+u^2}du \\
&= x^{1/2}\left(1 - \frac{2}{\pi}\arctan\left(\frac{s}{x^{1/2}}\right)\right).
\end{align*}

The trivial bound for $[\frac2\pi \int_s^\infty A(A+t^2)^{-1}dt, X]$ is then just the trivial bound for $[(f-g)(A), X]$ and using the more involved bound of (\ref{inv comm}) for the integrand amounts using the Lipschitz bound for  the integral: $[g(A), X]$. This can be seen by noting that $g'(0) = \frac2\pi\int_s^\infty t^{-2}dt = \frac2\pi s^{-1}.$

So, we see that a guess for the minimization of $E_{x^{1/2}}(c)$ was obtained using that choice of $g$ with a free parameter $s$ that was optimized. The integral representation allowed us a natural way of obtaining this parameterized function $g$. However, as we will see in the latter two examples using $f(x) = x^{1/2}$, there exist considerably better choices.
\end{example}

\begin{example}\label{OP Corona example}
In \cite{OP Corona}, Olsen and Pedersen had an approach to $\vertii{[A^r, X]}$ with $\vertii{X} \leq 1$, which amounted to proving that for $d > 0$ one has the trivial bound
\[\vertii{[(A+d)^r-A^r, X]} \leq d^r\]
and also using a Taylor series argument showing that
\[\vertii{[(A+d)^r, X]} \leq rd^{r-1}\vertii{[A, X]}.\]
This choice of $g$ provides $\gamma(r) \leq (1-r)^{r-1}$ so $\gamma(1/2) \leq \sqrt{2} \approx 1.4142$.

However, this example is just that of $g(x) = (x+d)^r$ on $[0, \infty)$ extended as an odd function. Then $g$ the conditions of Lemma \ref{convex derivative lemma} with $0 \leq g(x)-f(x) \leq g(0)-f(0) = d^r$ and $g'(0) = rd^{r-1}$.

So, we see that this example is also captured in our paradigm. In a sense, $g$ is a very natural function satisfying Lemma \ref{convex derivative lemma} to choose to approximate $x^r$ because the issue is that $x^r$ is not differentiable at $0$. However, it provided a worse estimate than may be desired.

Note that all these inequalities are valid replacing the operator norm by the unitarily-invariant norm $\vertiii{-}$ throughout.
\end{example}

\begin{example}\label{x/(x+1) shift}
We can try doing the same technique for $f(x) = f_1(x)$. Let $g(x) =f(x+d)=1-\frac{1}{x+d+1}$ for $x \geq 0$, $d \geq 0$. (Following the same sort of argument in what follows for $d \in (-1, 0]$ shows that this does not produce positive results.) Extend $g$ to be odd. 

Then $-g(0) \leq f(x)-g(x) \leq 0$ for $x \geq 0$, $g(0) = 1-\frac{1}{d+1}$, and $g'(0) = \frac1{(d+1)^2}.$  So,
\begin{align*}
E_f(c) &\leq \inf_{d  \geq 0} \left(1-\frac{1}{d+1} + \frac{c}{(d+1)^2}\right) = \inf_{0<s \leq 1 } \left(1-s + cs^2\right), 
\end{align*}
where $s = (d+1)^{-1}$. The objective function $\Phi(s) = 1-s + cs^2$ is minimized at $s = \frac{1}{2c}.$ If $c \geq 1/2$ then we choose this value of $s$. If $c < 1/2$ then we choose $s=1$. Hence, with
\begin{align*}
e(c) := 
\left\{ 
\begin{array}{ll}
c, & 0 < c < 1/2 \\
1 - \frac1{4c}, & c \geq 1/2
\end{array}
\right., 
\end{align*}
we have $E_{f_1}(c) \leq e(c).$

The constant that we get from this choice of parameterized $g$ is then
\[C = \sup_{c > 0} \frac{e(c)}{f_1(c)}.\]
It can be checked that this obtained for $c = \frac23$ providing the value $C = 1.5625$. 
\end{example}

\begin{example}\label{Loring Vides example}
Loring and Vides in \cite{Loring Vides} expand on the polynomial commutator argument of the previous example and use a function approximation method similar to our own, except the standard commutator Lipschitz estimate for a non-linear polynomial will involve the norm of $A$.

We illustrate this with the following result which is (Theorem 8.1, \cite{Loring Vides}) in the case of the operator norm:
\begin{prop} \label{Loring Vides abs} Let $A, X \in M_n(\C)$ with $A$ self-adjoint and $\vertiii{X}\leq 1$. Then
\begin{equation}\label{abs ineq}
\vertiii{[|A|, X]} \leq \frac12\vertii{A}+\vertiii{[A,X]}.
\end{equation}
\end{prop}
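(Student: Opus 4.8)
The plan is to exhibit a single explicit function $g$ in the class governed by Lemma~\ref{convex derivative lemma}, namely one whose odd extension has derivative convex and decreasing to zero on $[0,\infty)$, that approximates $|x|$ on the relevant spectral interval with a controlled sup-deviation and a controlled commutator Lipschitz norm, and then apply the fundamental commutator inequality (\ref{fund comm ineq}). Concretely, the natural choice mirroring Example~\ref{OP Corona example} (the $g(x)=(x+d)^r$ trick for $r=1$, suitably modified to handle the two-sided nature of $|x|$) is to take $g(x)=\sqrt{x^2+d^2}$ for some parameter $d>0$. This $g$ is smooth and even, $g(x)-|x|\in[0,d]$ for all $x\in\R$, and $g'(x)=x/\sqrt{x^2+d^2}$ is odd with $g'(x)\to 0$ as $x\to\infty$; one checks $g''(x)=d^2/(x^2+d^2)^{3/2}>0$ and decreasing on $[0,\infty)$, so $g'$ satisfies the hypotheses of Lemma~\ref{convex derivative lemma} and hence $\vertiii{g}_{\CL(\R)}\le g'(0)=\cdots$ — but here $g'(0)=0$, which is too strong for a nonconstant approximation, so one must instead use the derivative/commutator bound directly on $\sqrt{x^2+d^2}$ via its representation in terms of resolvents of $A$, obtaining a Lipschitz-type constant proportional to $1/d$.

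The cleaner route, and the one I would actually carry out, is to write $h(x)=|x|-g(x)$ where $g$ is linear on a large interval and flattens outside it — but since $|A|$ already has the difficulty localized at $0$, I would instead split $|A|$ itself rather than approximate it globally. Following the polynomial/Taylor philosophy of Loring--Vides, write, for $d>0$,
\[
|A| = \big(|A| - (A^2+d^2A)^{1/2}\big)\;+\;(A^2 + d^2 A)^{1/2}\cdot 0 \quad\text{(discarded)},
\]
which is unwieldy; the genuinely workable decomposition is $|A| = \tfrac{1}{2d}A^2\cdot\mathbf 1 + (\,|A| - \tfrac{1}{2d}A^2\,)$ valid where $\|A\|\le 2d$ — i.e. take $d=\tfrac14\|A\|$ — so that on the spectrum of $A$ the function $x\mapsto |x|-\tfrac{1}{2d}x^2$ has range of diameter at most $\tfrac{d}{2}=\tfrac18\|A\|$ (it is maximized at $|x|=d$ with value $d/2$ and is $0$ at $x=0,\pm 2d$; on $[-2d,2d]$ it lies in $[0,d/2]$, and one verifies it stays in $[0,d/2]$, contributing $\le \tfrac14\|A\|$ — the factor to be tracked carefully). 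Meanwhile $\vertiii{[\tfrac{1}{2d}A^2,X]}\le \tfrac{1}{2d}\vertiii{[A^2,X]}=\tfrac{1}{2d}\vertiii{A[A,X]+[A,X]A}\le \tfrac{1}{d}\|A\|\,\vertiii{[A,X]} = 4\vertiii{[A,X]}$ when $d=\tfrac14\|A\|$ — too large. So the parameter must be chosen to balance the $\tfrac12\|A\|$ sup-term against the $\vertiii{[A,X]}$ Lipschitz-term; taking the quadratic $g(x)=\tfrac{1}{2d}x^2$ with free $d$ gives sup-deviation $\le \tfrac{d}{2}$ (on $|x|\le \|A\|$, provided $d\le\|A\|$, otherwise larger) and commutator term $\le \tfrac{\|A\|}{d}\vertiii{[A,X]}$; the statement's coefficients $\tfrac12$ and $1$ pin down $d=\|A\|$, for which the quadratic $x\mapsto |x|-\tfrac{1}{2\|A\|}x^2$ on $[-\|A\|,\|A\|]$ attains maximum $\tfrac{\|A\|}{2}$ at the endpoints and minimum $0$ at $0$, giving sup-deviation exactly $\tfrac{\|A\|}{2}$, and $\vertiii{[g(A),X]}\le \vertiii{[A,X]}$.

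So the key steps, in order, are: (1) fix $d=\|A\|$ and set $g(x)=\tfrac{1}{2\|A\|}x^2$; (2) bound $\sup_{|x|\le\|A\|}\big||x|-g(x)\big| - \inf_{|x|\le\|A\|}\big(|x|-g(x)\big)\le\tfrac12\|A\|$ by elementary single-variable calculus on the parabola; (3) apply the trivial bound to $[\,(|A|-g(A)),X\,]$ using (\ref{Kittaneh ineq}) with $\vertiii{X}\le1$; (4) bound $\vertiii{[g(A),X]}=\tfrac{1}{2\|A\|}\vertiii{[A^2,X]}\le\tfrac{1}{2\|A\|}\cdot 2\|A\|\,\vertiii{[A,X]}=\vertiii{[A,X]}$ using $[A^2,X]=A[A,X]+[A,X]A$ and (\ref{unitary norm prop}); (5) add the two estimates via the triangle inequality to get (\ref{abs ineq}). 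The main obstacle is getting step (2) sharp: one needs that the quadratic, restricted to the actual spectral interval $[-\|A\|,\|A\|]$ (or more precisely $[\min\sigma(A),\max\sigma(A)]$, which could be one-sided), has oscillation at most $\tfrac12\|A\|$, and one should check that the worst case is the symmetric interval and that no better choice of $g$ within the linear-in-$\|A\|$, linear-in-$\vertiii{[A,X]}$ family (e.g. an affine shift $g(x)=\tfrac{1}{2\|A\|}x^2+\beta$) changes the constants — shifting $g$ by a constant does not affect the commutator and only recenters the oscillation, so $\tfrac12\|A\|$ is forced. A secondary subtlety: if one only knows $\vertiii{X}\le 1$ rather than $\|X\|\le 1$, step (3)'s trivial bound still applies since it was stated for $\vertiii{X}\le 1$ in Section~\ref{Commutator Bounds}, so no extra care is needed there.
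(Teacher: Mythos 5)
Your final argument (steps (1)--(5): take $g(x)=\tfrac{1}{2\Vert A\Vert}x^2$, bound the oscillation of $|x|-g(x)$ on $[-\Vert A\Vert,\Vert A\Vert]$ by $\tfrac12\Vert A\Vert$, use $[A^2,X]=A[A,X]+[A,X]A$ for the commutator term, and add) is correct and is exactly the paper's proof, which normalizes $\Vert A\Vert=1$ and takes $g(x)=\tfrac12x^2$. The exploratory detours with other values of $d$ are harmless since you discard them, and your final parameter choice and calculus check out.
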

\begin{proof}
Without loss of generality, we may scale $A$ so that $\vertii{A}=1$. We now follow the proof of (Theorem 8.1, \cite{Loring Vides}).

Let $f(x) = |x|$, $g(x) = \frac12x^2$, and $j(x) = f(x)-g(x)$. Then
\[\vertiii{[g(A),X]} = \frac12\vertiii{A[A,X]+[A,X]A} \leq \vertii{A}\vertiii{[A,X]} = \vertiii{[A,X]}.\]
Also, $\min_{x \in [-1,1]}j(x) = 0$ and $\max_{x \in [-1,1]}j(x) = \frac12$.

Then by the argument that goes into (\ref{fund comm ineq}),  
\[\vertiii{[|A|, X]} \leq \frac12\vertiii{X}+\vertiii{[A,X]}\leq \frac12+\vertiii{[A,X]}.\]
\end{proof}

\end{example}

We now proceed to provide two examples using the derivative perspective to estimating $E_{f}(c)$ for $f(x) = x^r$. We will first begin with a very general $f$ then specialize to the power function when it simplifies the exposition. 

The first example will be a simpler parameterized derivative $G=g'$ with only one parameter due to the simplifying assumptions on $G$. The second will weaken the assumptions, which introduces another parameter to provide a tighter estimate.

\begin{example}\label{F geq G example}

By (\ref{E diff per x^r}), we will produce $G \in \P_{x^r}$ by making it non-negative, even, convex on $[0, \infty)$, and $\lim_{x\to\infty}G(x) = 0$. 
For this first example we will further require that $G \leq F$ so that $\int_0^x (F(t)-G(t)) dt \geq 0$ for all $x \geq 0$ is automatic. 
We proceed in generality because we do not need to use anything about $F$ until we start to perform calculations, except that $F$ is differentiable.

For both examples would like the minimize $\sup_{x\geq 0}\int_0^x (F(t)-G(t)) dt +  cG(0)$. If we think of $G(0)$ as a fixed value $h > 0$ then because $G$ is convex and $G \leq F$, the largest that we can make $G$ is for $G$ to be a piecewise function whose graph is a line from $(0, h)$ to a point $(a, F(a))$ of the graph of $F$ so that the line is tangent to the graph of $F$. Then $G(t) = F(t)$ for $t \geq a$. This can be thought of as the portion of the Cartesian plane above the graph of $G$ being the convex hull of the graph of $F$ and the point $(0, h)$. 

Since the slope of the line segment portion of the graph of $G$ mentioned above is $\frac{F(a)-h}{a} = F'(a)$, we have $h = F(a)-aF'(a)$ and
\begin{equation}\label{G F-G pos}
G(t) = \left\{\begin{array}{ll} 
F'(a)(t-a)+ F(a),&  0 \leq t < a \\
F(t) , & t \geq a
\end{array}\right..
\end{equation}
Then
\begin{align*}
E_f(c) &\leq \inf_{a > 0}\left[\int_0^a (F(t)-F'(a)t-F(a)+aF'(a)) dt +  c\left(F(a)-aF'(a)\right)\right] \\
&= \inf_{a > 0}\left[f(a)-\frac{F'(a)}{2}a^2-aF(a)+a^2F'(a) + c\left(F(a)-aF'(a)\right)\right]\\
&= \inf_{a > 0}\left[f(a)+\frac{F'(a)}{2}a^2-aF(a) + c\left(F(a)-aF'(a)\right)\right].
\end{align*}

We now apply this to $f(t) = t^{r}, F(t)= rt^{r-1}, F'(t) = r(r-1)t^{r-2}$:
\begin{align*}
\gamma(r)\leq E_{x^r}(1) &\leq \inf_{a > 0}\left[\left(1+\frac{r(r-1)}{2}-r\right)a^r + \left(r-r(r-1)\right)a^{r-1}\right]\\
&=\inf_{a > 0}\left[\left(\frac{r^2}{2}-\frac32r+1\right)a^r + r(2-r)a^{r-1}\right]\\
&=(2-r)\inf_{a > 0}\left[\frac{1-r}{2}a^r + ra^{r-1}\right].
\end{align*}

This is minimized at $a = 2$ which gives 
\[\gamma(r) \leq (2-r)2^{r-1}.\] In particular, $\gamma(1/2) < 1.0607.$ In fact, for all $r \in (0,1)$,  $\gamma(r) < 1.062$. 
This is already a great improvement on the known results. 

Moreover, it is an improvement on the choice of $\tilde{g}(x) = f(x+\delta)$ which we can compare to this method. We see that $\tilde{g}'(0) = r\delta^{r-1}$ however the area between $f'(t)$ and $\tilde{g}'(t)$ is considerable and need not be so large if we require $\tilde{G} = \tilde{g}' \leq f'$. Our choice of $G$ in this example shows how to take advantage of the unused area in Example \ref{Boyadzhiev example} and Example \ref{OP Corona example} while maintaining the same operator Lipschitz constant of $g$, which is $G(0)$. 

\vspace{0.1in}

As far as expressing $g$ in terms of $G$, if we write $g$ using (\ref{G F-G pos}) and shift $g$ so that $g(a) = f(a)$, we see that $g$ is $f$ on $[a,\infty)$ and is the degree two Taylor polynomial of $f$ on $[0, a)$:
\begin{equation}\label{g F-G pos}
g(x) = \left\{\begin{array}{ll} 
\frac{1}{2}F'(a)(x-a)^2+ F(a)(x-a) + f(a),&  0 \leq x < a \\
f(x) , & x \geq a
\end{array}\right..
\end{equation}

\end{example}

\begin{example}\label{x^1/2 piecewise quadratic}
In this example we will make use of $\int_0^x (F(t)-G(t)) dt \geq 0$ and not assume that $G \leq F$. One way of thinking about why we would want to do this is if we think of $G(0)$ as fixed and increasing the value of $a$. This moves the ending point of the line from $(0, G(0))$ to $(a, F(a))$ along the graph of $F$ so the linear part of $G$ is no longer tangent to the graph of $F$ and begins to cross over the graph of $F$.

So, although $G$ will be crossing $F$, if $a$ is not increased too much then we will still have $f - g$ being non-negative because of the positive area already accumulated in $\int_0^x (F(t)-G(t)) dt$ for small $t$ will compensate for the new negative area introduced. Moreover, the supremum of $\int_0^x (F(t)-G(t)) dt$ will decrease. So, we see that we can reduce our bound for $E_f(c)$ without changing $g'(0)$ or the fact that $f-g \geq 0$.

Although we just described this new function $G$ in terms of $a$ and $h$, we will instead formulate $G$ in terms of $a$ and the difference $m$ between the slope of the linear portion of $G$ and the tangent line slope of $F$ at value $a$. 
$G$ will still have the same piecewise structure but instead of the line having slope $F'(a)$, it will have slope $F'(a)+m$ where $m \leq 0$. We prefer this formulation because then the requirements on $a$ and $m$ are independent: $a > 0, m \leq 0$ whereas $h \geq F(a)-aF'(a)$.

We will assume that $G(0) < F(0+)=\lim_{t \to 0^+}F(t)$, which was automatic in the prior example because $F$ is convex. With this assumption, the convexity of $F$ implies that there will be a point $t_\ast > 0$ such that $G(t) \geq F(t)$ exactly on $[t_\ast ,a].$

So, we will have
\begin{equation}\label{G F-G general}
G(t) = \left\{\begin{array}{ll} 
(F'(a)+m)(t-a)+ F(a),&  0 \leq t < a \\
F(t) , & t \geq a
\end{array}\right.
\end{equation}
and hence
\begin{equation}\label{g F-G general}
g(x) = \left\{\begin{array}{ll} 
\frac{F'(a)+m}{2}(x-a)^2+ F(a)(x-a) + f(a),&  0 \leq x < a \\
f(x) , & x \geq a
\end{array}\right..
\end{equation}
Here we have shifted $g$ so that $g(a) = f(a)$.

The tedious part is that $F-G$ can have multiple signs and we need to know where the sign changes. In particular, $F-G \geq 0$ on $[0, t_\ast]$, $F-G \leq 0$ on $[t_\ast,a]$, and $F-G = 0$ on $[a, \infty)$ and the point $t_\ast$ can be found by solving $F(t_\ast)=G(t_\ast)$. So, writing $j = f-g$, we have
\begin{align*}
E_{f}(c) \leq \sup_{[0, \infty)}j - \inf_{[0, \infty)}j + cg'(0) = j(t_\ast) - \min(j(0), j(a))+cg'(0).
\end{align*}
because $j$ is increasing on $[0, t_\ast]$ and decreasing on $[t_\ast, a]$ and hence the local minima of $j$ on $[0,a]$ are at $0, a$ and the local maximum of $j$ on this interval is at $t_\ast$.

We now specify to the case of $f(x) = x^{1/2}$, $F(x)= \frac12x^{-1/2}$, $F'(x) = -\frac{1}{4}x^{-3/2}$, and $c=1$. Because,
$t_\ast$ is where $j'(x)=0$, $t_\ast$ solves
\[\frac{1}{2}x^{-1/2}-\left(-\frac{1}{4}a^{-3/2}+m\right)(x-a)- \frac12a^{-1/2} = 0.\]
Let $u = x^{1/2}$, $q=\frac{1}{4}a^{-3/2}-m$. So,
\[\frac{1}{2u}+q(u^2-a)- \frac12a^{-1/2} = 0,\]
which is
\[qu^3+\left(-\frac12a^{-1/2} - aq\right)u+\frac{1}{2} = 0.\]

We know already that $j' =F-G$ vanishes at $x = a >0$, so the above cubic polynomial in $u$ has $u = a^{1/2}$ as a root. So, we can factor it to obtain
\[0=qu^3+\left(-\frac12a^{-1/2}-qa\right)u+\frac12
=q(u-a^{1/2})\left(u^2+a^{1/2}u-\frac1{2a^{1/2}q}\right).\]
So, applying the quadratic formula to the second term, omitting the negative root because $u > 0$, and using $t_\ast = u^2$ gives:
\begin{equation}
\label{t ast}t_\ast = \left(\frac{-a^{1/2}+\sqrt{a + \frac{4}{2a^{1/2}(\frac14a^{-3/2}-m)}}}{2}\right)^2
=\frac{a}4\left(-1+\sqrt{1 + \frac{8}{1-4ma^{3/2}}}\right)^2.
\end{equation}
Notice that when $m = 0$, this root reproduces the other root that we already had: $x = a$. When $m < 0$, we have $0 < t_\ast < a$.

We also note that 
\[g'(0) = G(0) = \frac{3}{4}a^{-1/2}-ma.\]

So, \[\gamma(1/2) \leq \inf_{a > 0, m \leq 0}\left[ j(t_\ast) - \min(j(0), 0)+g'(0) \right],\]
where we have calculated the relevant terms. We choose $a = 8, m = -0.03314563$ to obtain
$\gamma(1/2) \leq 1.02259.$ This is a great improvement on known results.

Note that we did not have a formula for the optimal values of $a$ and $m$. These specific values were gotten by MATLAB optimization since we had an explicit bound for $E_{x^{1/2}}(1)$. However, prior to this, using the visual graphing calculator software in the link below, we were able to obtain a close approximation to the optimal value even without a closed form for the answer.
\end{example}

\begin{example}\label{f_1 quadratic}
In the last example of this section, we will attempt to do the same calculation for $f(x) = f_1(x) = \frac{x}{x+1}$. An issue that appears with this is that we need to find the parameters $a$ and $m$ for all values of $c$ when there is no formula for $a$ and $m$, as in the previous example.
For this reason, we will somewhat gloss over the details of that last part of the calculation. The reason is that we could rigorously prove such estimates but because we will be using a way of doing that for the estimate in the next section for our main theorem, we choose not to do it here in this toy example that already does not provide the best constant $C$ that this paper presents.

We proceed as in the previous example: $f(x) = \frac{x}{x+1}$, $F(x) = \frac{1}{(x+1)^2}$, $F'(x) = -\frac{2}{(x+1)^3}$ and
\begin{equation*}
G(t) = \left\{\begin{array}{ll} 
\left(-\frac{2}{(a+1)^3}+m\right)(t-a)+ \frac{1}{(a+1)^2},&  0 \leq t < a \\
\frac{1}{(t+1)^2} , & t \geq a
\end{array}\right.
\end{equation*}
and hence
\begin{equation*}
g(x) = \left\{\begin{array}{ll} 
\left(-\frac{1}{(a+1)^3}+\frac{m}{2}\right)(x-a)^2+ \frac{1}{(a+1)^2}(x-a) + 1-\frac{1}{a+1},&  0 \leq x < a \\
1-\frac{1}{x+1} , & x \geq a
\end{array}\right..
\end{equation*}
We again included the shifting so that $g(a) = f(a)$.

In this case, $f'(0)=F(0)=1< \infty$. So, $F-G > 0$ on $(-1, a)$, $F-G \leq0$ on $[t_\ast, a)$ and $F-G = 0$ on $[a, \infty).$ The main difference is that $t_\ast$ may not be in the interval $(0, a)$ to disrupt the monotonicity of $f-g$ on the interval $[0,a]$.

So, with $j(x) = f(x) - g(x)$, $x =t_\ast$ is where
\[0 = j'(x) =  \frac{1}{(x+1)^2} + \left(\frac{2}{(a+1)^3}-m\right)(x-a)- \frac{1}{(a+1)^2},\]
or equivalently:
\[(a+1)^3 + q(x-a)(x+1)^2- (a+1)(x+1)^2 = 0,\]
where $q = 2-m(a+1)^3 > 2$ since $m < 0$. 
We factor the difference of squares then use $x = t_\ast < a$:
\[q(x-a)(x+1)^2+(a+1)[(a+1)+(x+1)][(a+1)- (x+1)] = 0,\]
\[\left[q(x+1)^2-(a+1)(x+a+2)\right](x-a) = 0,\]
\[qx^2+(2q-a-1)x+(q-a^2-3a-2) = 0.\]
The quadratic formula then provides
\begin{align*}
x &= \frac{-(2q-a-1) \pm \sqrt{(2q-a-1)^2-4q(q-a^2-3a-2)}}{2q}\\
&=\frac{-2q+a+1 \pm \sqrt{(a^2+2a+1)+(4qa^2+8qa+4q)}}{2q}\\
&=\frac{-2q+(a+1)\left(1 \pm \sqrt{1+4q}\right)}{2q}
\end{align*}
Because we are interested in the intersection point $x = t_\ast$ that is in $(-1, \infty)$ and since $q > 0$, we obtain
\[t_\ast=-1+\frac{(a+1)\left(1 + \sqrt{9-4m(a+1)^3}\right)}{4-2m(a+1)^3}.\]

The key values for evaluating $\sup_{x \geq 0}j(x) - \inf_{x \geq 0}j(x)$ are $j(0)$, $j(t_\ast)$,  and $j(a) = 0$. It is always the case that $F \leq G$ on $[t_\ast, a]$ so $j(t_\ast)$ is a local maximum. 
Also, if $t_\ast \leq 0$ then $j(x)$ is decreasing on $[0, a)$. So, 
\[\sup_{x \geq 0}j(x) - \inf_{x \geq 0}j(x) = 
\left\{\begin{array}{ll}
j(t_\ast)-\min(j(0), 0), & t_\ast > 0 \\
j(0), & t_\ast \leq 0
\end{array}\right..
\]

We then note that $g'(0) = a\left(\frac{2}{(a+1)^3}-m\right)+ \frac{1}{(a+1)^2}$ to obtain an explicit expression for the right-hand side of 
\begin{equation}\label{f1 piecewise quadratic inf}
\frac{E_{f_1}(c)}{c/(c+1)} \leq \frac{\inf_{a > 0, m<0}\left(\sup_{x \geq 0}j(x) - \inf_{x \geq 0}j(x)+cg'(0)\right)}{c/(c+1)}.
\end{equation}
Using MATLAB's \texttt{patternsearch()} minimization function, we generated the values of this bound for $c = 0.01, 0.02, \dots, 15$ 
and obtained a maximum value of $1.07688$, which roughly occurs in the range of $[.2, .4]$. The values of $c$ that we used in this calculation are denoted by $c_k$ and they satisfy $c_{k+1}-c_k = 0.01$.
See Figure \ref{piecewiseQuadraticMin_graph} for the graph of the computed values using the smaller fixed spacing of  $c_{k+1}-c_k = 0.001$. 

Notice that although this method does not exactly calculate the infimum in (\ref{f1 piecewise quadratic inf}), we are only aimed for an upper bound for $E_{f_1}(c)$ anyway so there is no issue with generating values of $a$ and $m$ that may or may not be precise estimates of the actual optimal choice of the $a, m$ for a given value of $c$.
\begin{figure}
     \centering
     \begin{subfigure}[b]{0.3\textwidth}
         \centering
         \includegraphics[width=\textwidth]{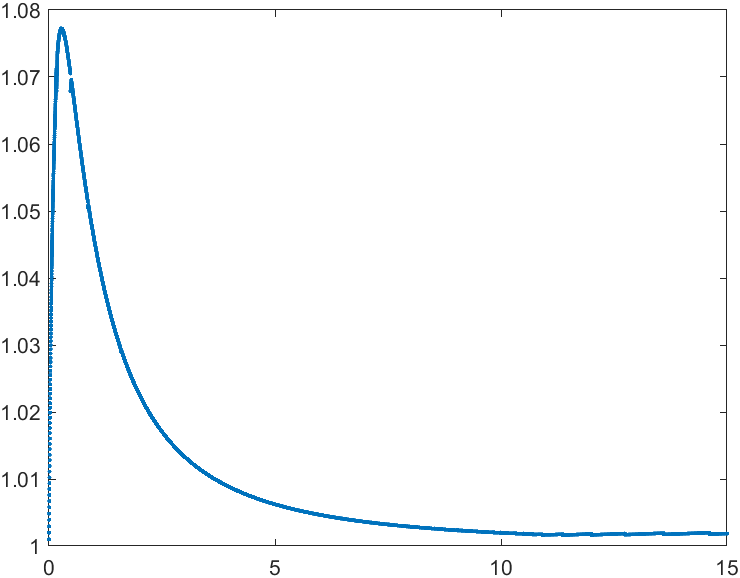}
         \caption{full graph} \label{piecewiseQuadraticMin_graph1_ext}
     \end{subfigure}
     \hspace{1in}
     \begin{subfigure}[b]{0.3\textwidth}
         \centering
         \includegraphics[width=\textwidth]{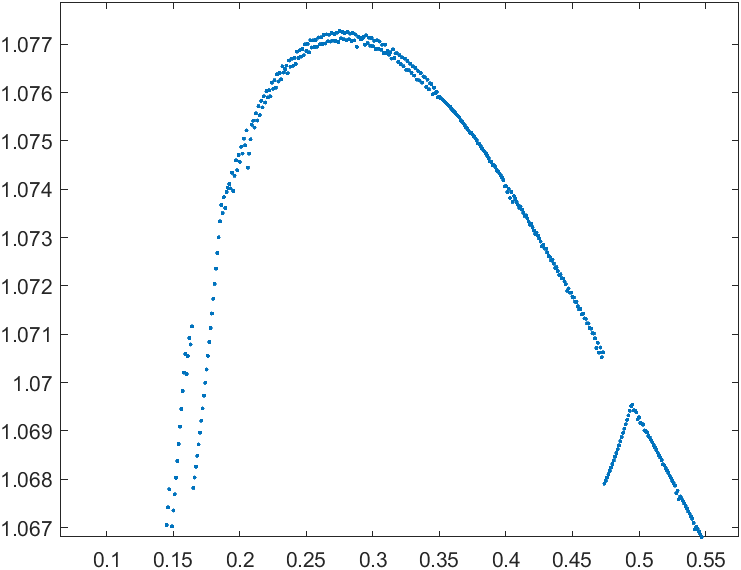}
         \caption{Graph nearby maximum}
         \label{piecewiseQuadraticMin_graph2_ext2}
     \end{subfigure}
        \caption{Plot of computed constants $C_k$ verses $c_k$ with $c_{k+1}=c_k+0.001$. One can note the discontinuities potentially due to the minimization procedure used.}        \label{piecewiseQuadraticMin_graph}
\end{figure}

We now need a way to piece these together to get a bound for all $c$. First, note that for $c \leq 0.01$ and $c \geq 15$, we can use the trivial bound of $E_{f_1}(c) \leq \min(c,1)$ from Example \ref{simple example} to obtain
\[\frac{E_{f_1}(c)}{c/(c+1)}  \leq \max\left(0.01+1, \frac{16}{15}\right) < 1.0667.\]
Then inside the interval $(0.01, 15)$ we will use Lemma \ref{C continuity points} to see that (\ref{C continuity - D interval}) holds for 
\[D < 1.07688\left(\frac{.01+.01+1}{.01+1}\right) \leq 1.08755.\]
If one instead follows the advice of Remark \ref{D_k remark}, we use a MATLAB calculation with the computed values of $C_k$ to obtain the constant
\[\max_k D_k < 1.08536.\]

If one uses values of $c_k$ the smaller spacing of $0.001$, then the maximum value of the $C_k$ increases to $1.07727$ and $\max_k D_k$ decreases to $1.07812$. See Figure \ref{piecewiseQuadraticMin_graph} for a graph of the values $C_k$ computed for these $c_k$.
So, the bound for the constant $C$ in (\ref{CI}) that we obtain for all $c$ is $C < 1.07812$. 

Notice that the distance that this estimate for $\gamma(1/2)\leq C < 1.07812$ is from $1$ is considerably larger than the estimate than we got in Example \ref{x^1/2 piecewise quadratic} of $\gamma(1/2) < 1.02259$ by using the same approximation method directly to $f(x) = x^{1/2}$. However, they are both much closer to $1$ than the previously known estimates. 

The main theorem proved in the next section provides the best estimate of this paper and we note here that the choice of parameterized approximation function $g(x)$ in that result is not possible for $f(x) = x^{1/2}$ because $g$ is bounded whereas $f$ is not.
\end{example}

\section{Proof of Main Theorem}\label{Proof of Main Theorem}

In this section we prove the main theorem, Theorem \ref{Main Thm}. The proof bears some resemblance to Example \ref{f_1 quadratic}, however due to the lack of a closed form solution it is more tricky, both mathematically and numerically. 

We begin by letting $f(x) = \frac{x}{x+1}$ and $G(x)=g'(x) = ae^{-bx^2}$, a Gaussian with two parameters $a, b \geq 0$. We have that $G$ is the Fourier transform of a positive function in $L^1$ so $\vertiii{g}_{\CL(\R)} = G(0) = a$. This is the easiest part of the estimate of $E_{f}(c)$.

We will not be able to write down a closed form solution for $\sup_{x \geq 0} j(x) - \inf_{x \geq 0} j(x)$ so we will lay out the calculations that are used in a MATLAB function for calculating it. Note that because it involves calculating all the local maxima/minima, we need to express the calculation in a way that identifies all these extreme points and gives correct bounds without knowing their exact locations. We do this by studying the nature of the extreme points, using the MATLAB function \texttt{fzero()} to identify approximate roots of $j'$, numerically verifying that the roots of $j'$ are nearby these approximate roots within some numerical tolerance, and then using this to provide a numerical upper bound for $\sup_{x \geq 0} j(x) - \inf_{x \geq 0} j(x)$.

Let 
\[g(x) = \int_0^x G(t)dt =  \frac{a}{2}\sqrt{\frac{\pi}{b}}\erf(\sqrt{b}x)\]
and $j(x) = f(x) - g(x).$ The interior local extrema of $j(x)$ are at $x$ where $\frac{1}{(x+1)^2} = ae^{-bx^2}$, which is
\[bx^2-2\log(x+1)-\log(a) = 0.\]

Define $\Phi(x) = bx^2-2\log(x+1)-\log(a)$ on the domain $(-1, \infty)$. Note that $\Phi(x)$ has the same sign as $j'(x)$. We wish to explore the nature of the roots of $\Phi$, particularly those in $[0, \infty)$. So, let us explore the behavior of $\Phi$ on its domain $(-1, \infty)$. If $x \to -1^+$ or $x \to \infty$, $\Phi(x) \to \infty$. Because $\Phi'(x) = 2bx-\frac{2}{x+1}$, solving $\Phi'(x) = 0$ is equivalent to solving $x^2+x-1/b = 0$. So, $\Phi'(x) = 0$ at
\[x_\ast = \frac{-1 + \sqrt{1 + \frac{4}{b}}}{2} > 0,\]
where we omitted the solution to the quadratic that is outside the domain of $\Phi$. The value $x_\ast>0$ is hence the global minimum of $\Phi$. Roughly speaking, the graph of $\Phi$ looks like an upward facing parabola with vertex at $x = x_\ast$ but with the line $x = -1$ as a vertical asymptote.

We now want to specify an interval in which we are guaranteed to find all the roots of $j'$ in $(0, \infty)$. We do this by first finding the right end point $x_e$ of such an interval. We  find a value of $x_e$ so that $\Phi(x) > 0$ for all $x \geq x_e$. Recall that $-\log(x+1) > -x$ for $x > 0$. So,
\[\Phi(x) > bx^2-2x-\log(a).\] Hence $\Phi(x) > 0$ for $x \geq x_e$ where
\[x_e = \frac{1+\sqrt{1+b\log(a)}}{b} > 0.\]

Now, suppose for the moment that $\Phi \geq 0$ on $[0, \infty)$. Then $F \geq G$. We already explored the case that $F \geq G$ for $x^r$ in Example \ref{F geq G example}. We argued rather generally that the optimal choice of $G$ under this condition should equal $F$ on some 
interval $[t_\ast, \infty)$ and should be a line on $[0, t_\ast]$. 
Something similar holds to that of Example \ref{f_1 quadratic}, but the result gotten using $F \geq G$ would give a worse estimate because of the lack of the additional parameter $m$ which would be fixed to equal zero. 

So, the assumption that we make is to require that $\Phi$ not be entirely non-negative on $[0, \infty)$ which implies that $\Phi$ has a root in $(0, \infty)$. Because of what we already said, it suffices to assume that $\Phi(x_\ast) < 0$. Because $\Phi''(x) = 2b + \frac{2}{(x+1)^2} > 0$, there are exactly two roots in the domain of $\Phi$, one of which belongs to $(-1, x_\ast)$ and one which belongs to $(x_\ast, x_e)$.

However, the assumption that $\Phi(x_\ast) < 0$ does not guaranteed that there is a root in $(0, x_\ast)$. Because $\Phi$ is strictly decreasing on $(-1, x_\ast)$ and $\Phi(x_\ast) < 0$, there is a root in $(0, x_\ast)$ if and only if $\Phi(0) = -\log(a) > 0$ which is true if and only if $a < 1$. If $a \geq 1$, then $\Phi'(x) \leq 0$ on $[0, x_\ast]$ so $j$ will have a local maximum at $0$ within its domain $[0, \infty)$. Otherwise, there is a local minimum at $0$.

So, we have identified the general regions where the possible local extrema are to be found. We also need to account for the limit as $x \to \infty$ because $\lim_{x\to\infty}j(x)$ might not be zero as it was in all the examples of Section \ref{Examples} that used this method. Because $j'(x)$ is positive for large $x$ and $\lim_{x\to\infty}\erf(x) = 1$, this contributes the term $j_\infty = 1 - \frac{a}{2}\sqrt{\frac{\pi}{b}}$ as a possible value for the supremum of $j$.

In summary, with the assumption that $\Phi(x_\ast) < 0$, there is a root $x_2$ of $j'$ in $(x_\ast, x_e)$ and a root $x_1$ in $(-1, x_\ast)$ which belongs to $(0, x_\ast)$ if and only if $a < 1$. So, because $\{x: j'(x) < 0\} = (x_1, x_2)$, we have
\begin{equation}\label{j oformula - main theorem}  \sup_{x \geq 0}j(x) - \inf_{x \geq 0}j(x) = \left\{\begin{array}{ll}
\max(j(x_1), j_\infty) - \min(j(0), j(x_2))   , & a < 1\\
\max(j(0),j_\infty) - j(x_2)  , & a \geq 1 
\end{array} \right.. 
\end{equation}

We now proceed to explaining how to programatically estimate \begin{equation}\label{j c exp}
\frac{\sup_{x \geq 0}j(x) - \inf_{x \geq 0}j(x)}{c/(c+1)}
\end{equation}
using Equation (\ref{j oformula - main theorem}). This will be encoded in our MATLAB function \texttt{ErfMin()} which is a function of the variables $c, a, b$. The only technicalities are in estimating $j(x_1)$ and $j(x_2)$ because we do not have a formula for $x_1$ or for $x_2$. 

There are two cases to consider. The first case is when $\Phi(x_\ast) \geq 0$. 
Because we have already dismissed this by us thinking that it will produce estimates worse than the explicit estimates in the prior section, we make \texttt{ErfMin()} produce the value $10$ in the case that $\Phi(x_\ast) \geq 0$. 
This is considerable larger than the values of $C$ that we already obtained in prior sections. If all the values produced by \texttt{ErfMin()} are less than $10$ for certain values of $c, a, b$ then the case of $\Phi(x_\ast) \geq 0$ never occurred. This is exactly what we obtain with the numerical calculations discussed later in the proof so we may proceed to assume that $\Phi(x_\ast) < 0$ which implies that Equation (\ref{j oformula - main theorem}) is valid.

We are then guaranteed a root $x_2$ in $(x_\ast, x_e)$. So, we use the MATLAB function \texttt{fzero()} applied to $\Phi$ over $(x_\ast, x_e)$ to produce an approximate root using the fact that $\Phi$  changes signs from $x_\ast$ to $x_e$, as is required by \texttt{fzero()}. Let $\tilde{x}_2$ denote this approximate root which is encoded as \texttt{x\_2}. Likewise, in the case of $a < 1$, $\tilde{x}_1$ will denote the root in $(0, x_\ast)$ gotten by \texttt{fzero()} and will be encoded as \texttt{x\_1}.

We incorporate into the function \texttt{ErfMin()} how close the value of $j(\tilde{x}_i)$ may be to $j(x_i)$ as follows. Let $T = 10^{-5}$ be the tolerance in the distance from the roots of $j'$ and their respective approximate roots. Let $T_{f} = 10^{-10}$ denote the computation tolerance. 
The function \texttt{ErfMin()} produces an error if it is not the case that $j'(\tilde{x}_2 - T) \leq -T_f$ and $j'(\tilde{x}_2 + T) \geq T_f$. This provides some assurance within an error of $T_f$ that $j'$ has a root in $[\tilde{x}_2-T, \tilde{x}_2 + T]$ because $j'$ should change signs from negative to positive on this interval if it contains $x_2$. It also produces an error if $\tilde{x}_2- T$ is not in the set $[T_f, \infty)$.

Likewise, in the case that $a< 1$, the function \texttt{ErfMin()} produces an error if it is not the case that $j'(\tilde{x}_1 - T) \geq T_f$ and $j'(\tilde{x}_1 + T) \leq -T_f$. It also produces an error if $\tilde{x}_1- T$ is not in the set $[T_f, \infty)$. 

This provides relative assurance that the critical points of $j$ are indeed where they are computed by \texttt{fzero()} within a distance of $T$ with numerical tolerance of $T_f$ and that these tolerance intervals are in the domain within a tolerance of $T_f$.

Knowing that no grave error was produced in the use of \texttt{fzero()}, we now bound how much compensation for error we must incorporate into \texttt{ErfMin()} for it to produce a bound for (\ref{j c exp}). We know that $|\tilde{x}_i-x_i| \leq T$ so $|j(\tilde{x}_i)-j(x_i)| \leq T\sup_{x \geq 0} |j'(x)|$.
Note that $|j'(x)| \leq 1 + a$ for all $x \geq 0$. So, for the potentially two roots calculated in \texttt{ErfMin()}, we incorporate the additional error term of $2(1+a)T$ encoded as \texttt{q\_error}. 

Now, given arguments $c, a, b$, \texttt{ErfMin()} produces an upper bound for (\ref{j c exp}) wand hence an upper bound for $\frac{E_{f_1}(c)}{c/(c+1)}.$
Let $c_k, a_k, b_k$ be arguments provided to the function \texttt{ErfMin()} where $0 < c_1 < \cdots < c_n$ and denote $C_k$ be the respective values produced. Then we apply Lemma \ref{C continuity points} to obtain a constant $C = \max_k D_k$ so that (\ref{f1 comm ineq}) holds for every value of $c=\vertiii{[A,X]}$ in the interval $[c_1, c_n]$. The values of $a_k, b_k$ are gotten through an optimization procedure using MATLAB's \texttt{patternsearch()} function. Because we provide  \texttt{.txt} files containing the product of this procedure, it is unnecessary to go into exactly how this was done.

So, we apply this with
\[c = .0195, .0195+.0005, \dots, 1.5, 1.5+.005, \dots, 10, 10+.05, \dots, 40\] 
and $a_k, b_k$ from files \texttt{EM\_as.txt} and \texttt{EM\_bs.txt}.
This produces a constant $C$ for (\ref{j c exp}) that is less than $1.01975$ for all $c$ in $[0.0195, 40]$.
\begin{figure}
     \centering
     \begin{subfigure}[b]{0.3\textwidth}
         \centering
         \includegraphics[width=\textwidth]{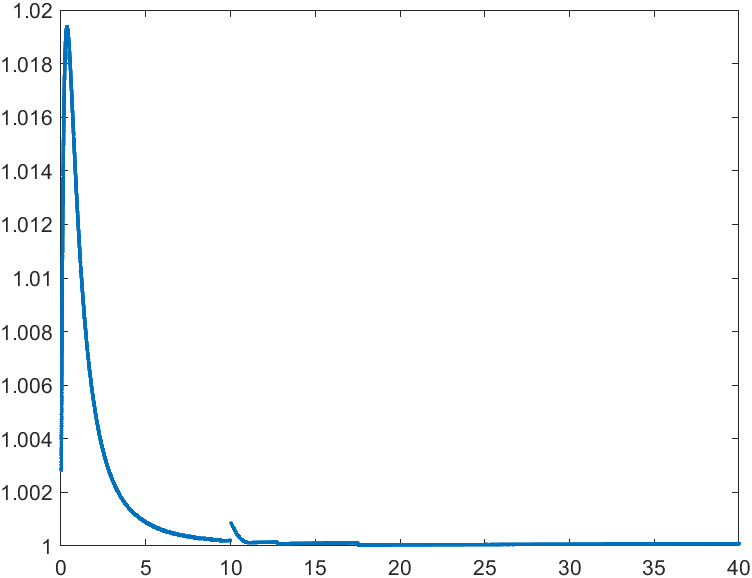}
         \caption{full graph} \label{ErfMin_graph1_updated}
     \end{subfigure}
     \hfill
     \begin{subfigure}[b]{0.3\textwidth}
         \centering
         \includegraphics[width=\textwidth]{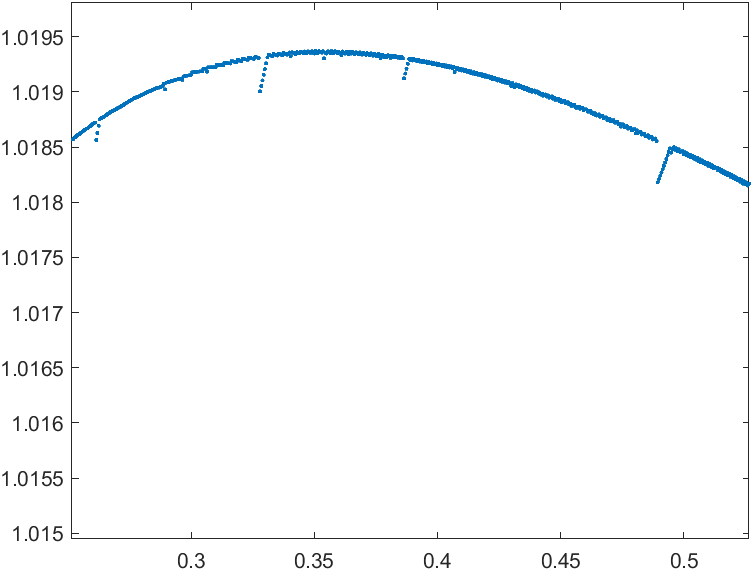}
         \caption{Graph nearby maximum}
         \label{ErfMin_graph2_updated}
     \end{subfigure}
     \hfill
     \begin{subfigure}[b]{0.3\textwidth}
         \centering
         \includegraphics[width=\textwidth]{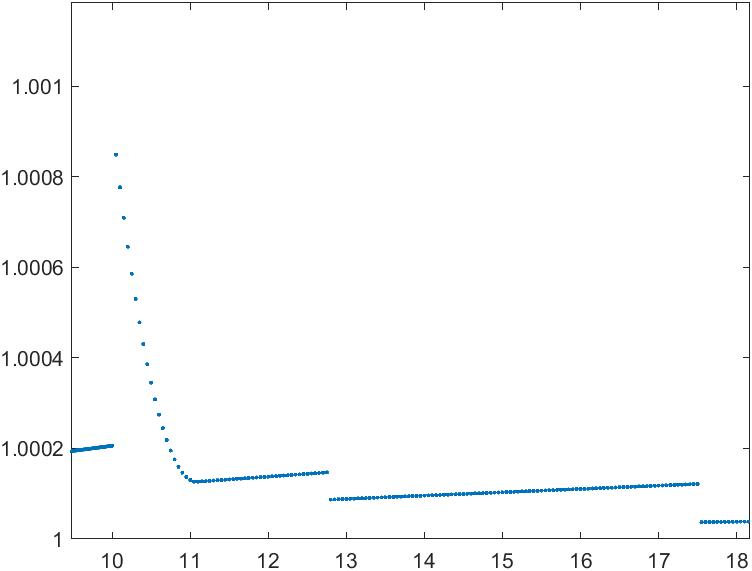}
         \caption{Graph near discontinuities with small values}
         \label{ErfMin_graph3_updated}
     \end{subfigure}
        \caption{Graph of the values of $C_k$ generated by \texttt{ErfMin()} vs. $c_k$ as discussed in the proof of the main theorem. Discontinuities suggest that the $a_k, b_k$ used are perhaps not optimal so the constant generated by this method may be smaller than what was computed using our values of $a_k, b_k$.}        \label{ErfMin Graphs}
\end{figure}

For the interval $(0, 0.0195)$, we apply the Lipschitz constant bound $c$ to obtain a constant of
\[\sup_{c \in (0, 0.0195)} \frac{c}{c/(c+1)} = 1.0195.\]
For the interval $(40, \infty)$, we apply the  bound $1-\frac{1}{4c}$ for $c \geq 1/2$ in Example \ref{x/(x+1) shift} to obtain a constant of
\[\sup_{c \in (40, \infty)} \frac{1-\frac{1}{4c}}{c/(c+1)} < 1.0186 .\]
So, we obtain $C < 1.01975$ applying for all $c > 0$. This concludes the proof.

\section{Further Improvements in Special Cases}\label{Special Cases}

Although we are aimed at the constant $C$ in (\ref{f1 comm ineq}), in this section we pause to dig into some of the finer detail to extract more from what we have already discussed for specific operator monotone functions.
We will do this by estimate the value of the constant $C(c)$, which is defined to be the optimal constant for
\[\vertiii{[f_1(A),X]} \leq C(c) f_1\left(c\right)\]
for all $A, B \in B(\H)$ satisfying $A \geq 0$, $\vertiii{X} \leq 1$, and $\vertiii{[A,X]}=c$. 
Conjecture \ref{Conjecture 1} can be restated as saying that $C(c)$ is identically equal to $1$. We will generically use $C(c)$ to mean to apply to all unitarily-invariant norms on a finite dimensional $B(\H)$ or the operator norm on $B(\H)$ where $\H$ is more general. If we wish to specify a specific norm, then we will state so specifically.

We now perform (\ref{f_t reduction}) with more care.
First suppose that $f \geq 0$ is operator monotone on $[0, \infty)$ with $\alpha, \beta = 0$ in (\ref{integral representation}) for simplicity. Let $c = \vertiii{[A,X]}$. Then by (\ref{f_t reduction}) and (\ref{f_t scaling}),  
\begin{align}\nonumber
\vertiii{[f(A),X]} &\leq \int_0^\infty t\vertiii{[f_1(t^{-1}A),X]}d\nu(t) \\
&\leq \int_0^\infty C\left(\frac{c}{t}\right)tf_1\left(\frac{c}{t}\right)d\nu(t) = \int_0^\infty C\left(\frac{c}{t}\right)tf_t(c)d\nu(t). 
\label{f_t finer reduction}
\end{align}
Moreover, from Example \ref{simple example}, we see that $C(t)$ will satisfy $C(t) \to 1$ as $t \to 0^+$ or $t\to \infty$ because
\begin{equation}
\label{simple bound}
1 \leq C(t) \leq \frac{\min(t,1)}{t/(t+1)} = \min\left(t+1, \frac{t+1}{t}\right).
\end{equation}
So, if $C(t)$ is not identically $1$ and the support of $\nu$ has non-empty intersection with every neighborhood of $\{0, \infty\}$, then $C(t) < C$ on a set with non-zero $\nu$-measure so we will obtain 
\[\vertiii{[f(A),X]} < C\int_0^\infty tf_t(c)d\nu(t) = Cf(\vertiii{[A,X]})\]
whenever $[A,X] \neq 0$.
If either $\alpha$ or $\beta$ are positive then the strict inequality applies without any assumption on the support of $\nu$.

We now explore how this can provide estimates of the form $\gamma(r) < C$  in the following examples.

\begin{example} Consider the bound in (\ref{simple bound}). If we apply this with $f(x) = x^{1/2}$ with the integral representation in (\ref{sqrt int rep}) and $c = 1$, we have $A^{1/2} = \frac2\pi\int_0^\infty f_{t^2}(A)dt$ so
\[
\vertiii{[f(A),X]} \leq \int_0^\infty  \min\left(\frac{1}{t}, 1\right)f_{t^2}(1)dt. 
\]
Calculating this integral gives exactly the bound for $\gamma(1/2)$ gotten by Boyadzhiev as discussed in Example \ref{Boyadzhiev example}. In fact, it is no surprise because (\ref{simple bound}) is merely the commutator inequalities performed by him in two cases. 
\end{example}

\begin{example}
In this case, we will make use of (\ref{scaled}) to obtain a bound for $C(c)$ for the operator norm. Applying this equality for some $t > 0$ to $f_1$ with $\|X\| \leq 1$ and $c = \vertii{[A,X]}$ provides
\[\vertii{[f_1(A),X]} \leq \frac{1+t^2}{2t} \frac{f_1(2tc)}{c/(c+1)}f_1(c) = \frac{(1+t^2)(c+1)}{2tc+1}f_1(c).\]
For this value of $c$, we choose the optimal value of 
\[t = \frac{-1+\sqrt{1+4c^2}}{2c} > 0\]
which after some rearranging gives
\[\vertii{[f_1(A),X]} \leq \frac{c}{\frac{1}{2}+\sqrt{\frac{1}{4}+c^{2}}}\]
so
\[C(c) \leq \frac{c+1}{\frac{1}{2}+\sqrt{\frac{1}{4}+c^{2}}}.\]

Notice that this bound has a maximum value of $5/4$ at $c = 2/3$. So, although this upper bound is larger than $\csc(1)$, it is smaller than this value outside the interval $[.268, 1.701]$. So, in the case of the operator norm, we obtain the best explicit bound:
\[C(c) \leq  \min\left(\csc(1), \frac{c+1}{\frac{1}{2}+\sqrt{\frac{1}{4}+c^{2}}}\right).\]

Taking this together with (\ref{f_t finer reduction}), we obtain the bound of $\gamma_0(1/2) < 1.102$. Note that this estimate is better than the previously known estimate of $\pi/\sqrt2 \approx 1.1284$. So, in a certain sense better estimates were very close from being obtained. 
\end{example}

Note that there exist sharper estimates, but we are not aware of any that can be written explicitly. For instance, we could use the estimate that we obtained in Theorem \ref{sin(t) estimate}. However for a given value of $c$, finding its optimal value of $t$ in such a scaled inequality requires solving $\tan(t)=t(tc+1)$.

We now apply this method along with the data from the main theorem to prove Theorem \ref{Main Thm cor sqrt}. This is a great improvement of all the known results. Moreover, it is considerably smaller than the bound of $\gamma(1/2) < 1.01975$ from the main theorem.

\begin{proof}
We first begin by using the integral representation
\[A^{1/2} = \frac1\pi\int_0^\infty f_t(A)t^{-1/2}dt\]
from (\cite{BarrySimon - Loewner}, Chapter 4). The appearance of $f_t$ rather than $f_{t^2}$ is why we prefer this representation.
So, if $\vertiii{X} \leq 1$ and $\vertiii{[A,X]}=1$ then with using a change of variables
\[\vertiii{[A^{1/2},X]} \leq \frac1\pi\int_0^\infty C\left(\frac{1}{t}\right)\frac{f_t(1)}{t^{1/2}}dt = \frac1\pi\int_0^\infty \frac{C(1/t)}{(1+t)t^{1/2}}dt= \frac1\pi\int_0^\infty \frac{C(t)}{(1+t)t^{1/2}}dt.\]

We estimate this integral in three pieces, as before. On the first and last interval we use (\ref{simple bound}). In particular, we use $C(t)\leq 1+t$ on $(0, .0195]$ and $C(t)\leq(1+t)/t$ on $[40, \infty)$.
From the data calculated in the proof of the main theorem, there are $c_k$ with $c_1 = .0195 < c_2 < \cdots < c_n=40$ with bounds $C_k$ for the commutator inequality with $c = c_k$. By, Lemma \ref{C continuity}, for a value of $t$ in the interval $[c_k, c_{k+1}]$, we have \[C(t) \leq C_k\left(\frac{t+1}{c_k+1}\right).\]
So, 
\begin{align*}
\vertiii{[A^{1/2},X]}  &\leq \frac1\pi\left(\int_0^{0.0195}t^{-1/2}dt+\sum_{k=1}^{n-1}\frac{C_k}{c_k+1}\int_{c_k}^{c_{k+1}} t^{-1/2}dt + \int_{40}^\infty t^{-3/2}dt\right)\\
&=\frac1\pi\left(2( 0.0195^{1/2})+\sum_{k=1}^{n-1}\frac{2C_k}{c_k+1}\left(c_{k+1}^{1/2}-c_k^{1/2}\right) + 2(40^{-1/2})\right) < 1.00891.
\end{align*}
\end{proof}
Note that in all the results of this section, they provide smaller estimates for certain functions $f$. However, as long as $C(c)$ is not shown to be $1$ almost everywhere, this method cannot be used to prove $\gamma(r) = 1$.

\section{False Positives?}\label{False Positives?}

In this section we discuss two results that are a solution to the desired inequality (\ref{CIneqMod0}) with $C = 1$ in some special cases. On their face, these results suggest that Conjecture \ref{Conjecture 1} could plausibly be true. However, we prove in this section that these results actually are still true under much weaker conditions than $f$ being operator monotone, such as $f$ merely being non-negative and strictly concave. The second result we discuss of Loring and Vides generalizes and is also interesting for its own sake.

It is common that some non-trivial operator condition be required beyond mere smoothness for inequalities of numbers to extend to operators. Like in the proof of Ando's theorem, this is used in the proof of the associated norm inequalities. So, the results of this section suggest that these results could unfortunately be dead-ends as far as the journey to obtain a proof of the desired result. We also provide an example of Ando's inequality failing (and hence the conjectured inequality as well) for a smooth non-negative concave function.

It is the author's view that the compelling evidence in the literature that Conjecture \ref{Conjecture 1} for the operator norm holds for $f(x)=x^{1/2}$ comes from the simulations alluded to by Pedersen and the Monte Carlo simulations done by Loring and Vides in \cite{Loring Vides}. Also, as discussed in the introduction, our paper further supports the belief that the conjecture may be true for operator monotone functions. Possible future Monte Carlo simulations for the function $f(x)=x/(x+1)$ might support the belief that the desired inequality holds for operator monotone functions. 

\vspace{0.1in}

Consider the following counter-example to Ando's theorem (and hence Conjectures \ref{Conjecture 0}, \ref{Conjecture 1}, \ref{Conjecture 2}, and \ref{Conjecture 3}) for the operator norm.
\begin{example}\label{counter-example}
Let 
\[f(x) = \left\{ \begin{array} {ll}
x, & x \leq 1 \\
1, & x > 1
\end{array}\right..\]
Note that $f(x)\leq x$ for all $x \geq 0$.
The function $f$ is evidently concave and non-negative on $[0,\infty)$. However, this function is not operator Lipschitz on this interval. In particular, $f(x) = \frac12\left(x+1-\left|x-1\right|\right)$ and the absolute value function is not operator Lipschitz (see \cite{Kato abs function Lipschitz}). Moreover, because the inequality
\[\||A|-|B|\| \leq C\|A-B\|\]
is homogeneous, we can assume that counter-examples to this can be chosen with spectrum in $[-1,1]$. This implies that $f$ is not operator Lipschitz on $[0, 2]$.

However, if Ando's inequality were to hold for $f$, even with a constant $C \geq 1$, then
\[\|f(A) - f(B)\| \leq Cf(\|A-B\|) \leq C\|A-B\|\]
which implies that $f$ would be operator Lipschitz with this constant, which it is not.

We now show that Ando's inequality does not apply with any single constant $C \geq 1$ for all non-negative, smooth, strictly-concave functions on $[0,\infty)$.
Note that we can always uniformly approximate $f$ by a non-negative, smooth, concave function by smoothening out a piecewise linear approximation of $f$. Then it can be made strictly concave by adding $\varepsilon \frac{x}{x+1}$ for $\varepsilon>0$ small. Let $f^\varepsilon$ be this approximation. Then we would have
\[\|f(A)-f(B)\|= \lim_{\varepsilon \to 0^+}\|f^\varepsilon(A)-f^\varepsilon(B)\| \leq \lim_{\varepsilon \to 0^+}Cf^\varepsilon(\|A-B\|) = Cf(\|A-B\|)\]
which we already showed was not true.

This shows that Ando's inequality with a constant $C \geq 1$ fails in the general case of the operator norm for non-negative, smooth, strictly-concave functions on $[0,\infty)$. However, it does not make any statement about this inequality expressed in the form of that of Corollary \ref{Ando Norms of differences of f} for other unitarily-invariant norms. It is an easy consequence that it does not hold with constant $C=1$ for Schatten-$p$ norms for $p$ large. However, it does hold for the Hilbert-Schmidt norm, as we discuss at the end of this section.
\end{example}

We now discuss the first example of a possible false positive to the conjecture.
\begin{example}
In (Remark 4, \cite{BK-commutators}), the authors of that paper proved Conjecture \ref{Conjecture 0} in the case of $2 \times 2$ matrices when $A=B$. However, with little effort their proof actually shows that the inequality holds for \emph{all} non-negative concave functions.

We will discuss this in detail now by following the exposition in \cite{BK-commutators}. Without loss of generality because the norm $\vertiii{-}$ is unitarily invariant, we may assume that $A = \bp a_1 & 0 \\ 0 & a_2\ep$ is diagonal with $a_2 \geq a_1 \geq 0$. Write $X = \bp x_{11} & x_{12} \\ x_{21} & x_{22}\ep$. Commutators of $X$ with $A$ or with $f(A)$ are the same as commutators of $X - \diag(X) =   \bp 0 & x_{12} \\ x_{21} & 0\ep$, respectively. Note that $\|X\|\leq 1$ implies that $|x_{12}|, |x_{21}| \leq 1$.

Let $f \geq 0$ be concave on $[0, \infty)$. Note that this implies that $f$ is monotonically increasing. Because
\[[A,X] = 
\bp 0 & (a_1-a_2)x_{12}\\ 
(a_2-a_1)x_{21} & 0 \ep,
\] we see that the singular values of $[A,X]$ are $(a_2-a_1)|x_{12}|$ and $(a_2-a_1)|x_{21}|$. So, the singular values of $f(|[A,X]|)$ are $f\left((a_2-a_1)|x_{12}|\right)$ and $f\left((a_2-a_1)|x_{21}|\right)$.
Similarly, the singular values of $[f(A),X]$ are $(f(a_2)-f(a_1))|x_{12}|$ and $(f(a_2)-f(a_1))|x_{21}|$. 

So, the inequality 
(\ref{BK Ineq}) with $A = B$ and constant $C = 1$ holds for all unitarily-invariant norms on $M_2(\C)$ if and only if for all $t \in [0, 1]$,
\begin{equation}\label{2by2 equivalent}
t(f(a_2)-f(a_1)) \leq f(t(a_2-a_1)).
\end{equation}
This is implied by the non-negativity and concavity of $f$ as follows. $f$ is sub-additive so because $a_2 - a_1 \geq 0$,
\[tf(a_2) \leq tf(a_2-a_1) + tf(a_1).\]
Hence, we only need to show that
$tf(x) \leq f(tx)$ for $x = a_2-a_1\geq 0$. This follows from the standard argument:
\[tf(x) \leq tf(x) + (1-t)f(0) \leq f(tx + (1-t)0) = f(tx).\]

Also, (\ref{2by2 equivalent}) for all $a_2 \geq a_1 \geq 0$ and $t \in [0,1]$ is just the $1 \times 1$ version of the inequality  (\ref{BK Ineq}) with $A = a_1$, $B = a_2$, and $X = te^{i\theta}$.
We here conclude our remark that $f$ was only needed to be non-negative and concave in this example.  
\end{example}

The following result of Loring and Vides is the other result in the literature that we wish to mention in this section.
They proved this by making use of the argument we discussed in Example \ref{Loring Vides example} with $g$ being a line. This result roughly says that we get the conjectured optimal estimate for $x^{1/2}$ if the commutator is not too small. 
\begin{prop} (Lemma 5.1 of \cite{Loring Vides}) \label{Loring Vides}
Suppose that $A, X \in M_n(\C)$ with $A \geq 0$ and $\|X\| \leq 1$. 

If we restrict to $\|A\| \leq 1$ and $\|[A,X]\| \geq 1/4$ then
\[\|[A^{1/2},X]\| \leq \|[A,X]\|^{1/2}.\]
\end{prop}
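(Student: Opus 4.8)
The plan is to apply the function-approximation estimate (\ref{fund comm ineq}) from Section \ref{Reductions}, in exactly the spirit of Example \ref{Loring Vides example}, taking $f(x) = x^{1/2}$ and choosing $g$ to be the tangent line to $f$ at the point $x = c$, where $c := \|[A,X]\|$. The only inputs are the trivial-bound term and the fact that an affine function has commutator Lipschitz norm equal to its slope.

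First I would pin down the range of $c$. Since $A \geq 0$ with $\|A\| \leq 1$, the spectrum of $A$ lies in $[0,1]$, so (\ref{Kittaneh ineq}) gives $c = \|[A,X]\| \leq (1-0)\|X\| \leq 1$; together with the hypothesis $c \geq 1/4$ this yields $c \in [1/4,1]$. Then I set $g(x) = \frac{1}{2\sqrt c}\,x + \frac{\sqrt c}{2}$, the tangent to $x^{1/2}$ at $x = c$. Because $g$ is affine, $[g(A),X] = \frac{1}{2\sqrt c}[A,X]$, so its commutator $\|-\|$-Lipschitz constant on any interval equals its slope $\frac{1}{2\sqrt c}$.

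Next I would evaluate the trivial term, the oscillation of $j := f - g$ over $[0,\|A\|] \subseteq [0,1]$. By concavity of $f$ the tangent lies above the graph, so $j \leq 0$ on $[0,\infty)$ with $j(c) = 0$, giving $\sup j = 0$; and $j$ is concave with its maximum at $c$, so its minimum on $[0,1]$ occurs at an endpoint. Since $j(1) - j(0) = 1 - \frac{1}{2\sqrt c} \geq 0$ precisely because $c \geq 1/4$, the minimizing endpoint is $x = 0$, where $j(0) = -\frac{\sqrt c}{2}$. Hence the oscillation of $j$ over $[0,\|A\|]$ is at most $\frac{\sqrt c}{2}$. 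Feeding this and $\vertiii{g}_{\CL([0,\|A\|])} \leq \frac{1}{2\sqrt c}$ into (\ref{fund comm ineq}) with $\|X\| \leq 1$ gives
\[
\|[A^{1/2},X]\| \;\leq\; \frac{\sqrt c}{2}\,\|X\| + \frac{1}{2\sqrt c}\,\|[A,X]\| \;\leq\; \frac{\sqrt c}{2} + \frac{c}{2\sqrt c} \;=\; \sqrt c \;=\; \|[A,X]\|^{1/2}.
\]

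I do not expect a deep obstacle here. The one point requiring attention is that the tangent-at-$c$ choice is exactly what forces the trivial term ($\tfrac{\sqrt c}{2}$) and the Lipschitz term ($\tfrac{c}{2\sqrt c}$) to be equal, so the estimate is tight; and the endpoint analysis of $j$ is precisely where the hypothesis $\|[A,X]\| \geq 1/4$ is used (while $\|A\| \leq 1$ is used only to guarantee $c \leq 1$, so that the tangent point $c$ lies in $[0,1]$). As a sanity check one can verify that minimizing over the slope of an arbitrary affine $g$ on $[0,1]$ returns exactly this tangent, confirming the choice is optimal rather than ad hoc.
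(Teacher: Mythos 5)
Your proof is correct and follows essentially the same route as the paper: the paper attributes this result to a linear-$g$ instance of the decomposition (\ref{fund comm ineq}) and proves the generalization Theorem \ref{LV extension} using exactly the affine function with slope $f'(c)$ (a line through the origin rather than the tangent line at $c$, but the two differ only by an additive constant, which affects neither the commutator term nor the oscillation of $f-g$). Your endpoint analysis showing where $c\geq 1/4$ and $\|A\|\leq 1$ enter matches the paper's condition $(F^{-1}\circ k)(\|A\|)\leq \|[A,X]\|$ specialized to $f(x)=x^{1/2}$.
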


We now extend Proposition \ref{Loring Vides} as follows. 
\begin{thm}\label{LV extension}
Suppose that $A, X$ are operators in $B(\H)$ with $\H$ separable, $A \geq 0$, and $\vertiii{X} \leq 1$. 
Let $f\in C^1((0, \infty))$ be a non-negative, strictly concave function with $f(0+) = \lim_{x \to 0^+}f(x)=0$. Define the strictly decreasing functions $k(x) = \frac{f(x)}{x}$ and $F(x) = f'(x)$. Then 
\[ \vertiii{[f(A), X]} \leq f(\vertiii{[A,X]})\]
whenever \[(F^{-1}\circ k)( \vertiii{A} ) \leq \vertiii{[A,X]} \leq  \vertiii{A}.\]

Note that $(F^{-1}\circ k)(x) < x$ for every $x \in (0, \infty)$.
\\In the case of $f(x) = f_t(x) =  \frac{x}{x+t}$, 
\[(F^{-1}\circ k)(x)=  t^{1/2}(x+t)^{1/2}-t.\]
\\In the case that $f(x) = x^r$ for $r \in (0, 1)$, 
\[(F^{-1}\circ k)(x)= r^{1/(1-r)}x.\]
\end{thm}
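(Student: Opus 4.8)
The plan is to run the commutator-splitting method of Section~\ref{Reductions} with the \emph{linear} comparison function $g(x) = f'(c)\,x$, where $c := \vertiii{[A,X]}$. The point is that for a linear $g$ one has $[g(A),X] = f'(c)\,[A,X]$ exactly, so the commutator-Lipschitz part of the estimate contributes precisely $f'(c)\,c$, while the two hypotheses on $c$ are exactly what is needed to force the oscillation of $f-g$ over the spectral interval of $A$ to equal $f(c) - c f'(c)$. The two pieces then telescope to $f(c)$.

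\emph{Set-up and main estimate.} Write $\rho := \vertiii{A}$, so that $A \ge 0$ has spectrum in $[0,\rho]$, and keep the notation $F = f'$, $k(x) = f(x)/x$. Since $f$ is strictly concave, $F$ is strictly decreasing and continuous on $(0,\infty)$; and since $f(0+)=0$ with $f\ge 0$ on $[0,\infty)$, one checks that $f$ is non-decreasing with $F > 0$ everywhere (if $F(x_0)\le 0$ then $F < F(x_0) \le 0$ for $x>x_0$, forcing $f\to-\infty$). Because $F$ is strictly decreasing with limits $\ell = \lim_{x\to\infty}F(x)$, $L = \lim_{x\to 0^+}F(x)$ and $k(x) = x^{-1}\int_0^x F$, we have $\ell < k(x) < L$, so $k(\rho)$ lies in the range of $F$ and $(F^{-1}\circ k)(\rho)$ is well defined; by hypothesis $(F^{-1}\circ k)(\rho) \le c \le \rho$, in particular $c>0$. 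Now set $\lambda := F(c) > 0$, $g(x) := \lambda x$, and $j := f-g$ on $[0,\rho]$ with $j(0)=0$. Then $j' = F - \lambda$ is strictly decreasing and vanishes at $c$, so $j$ increases on $[0,c]$ and decreases on $[c,\rho]$; using $c\le\rho$ this gives $\sup_{[0,\rho]}j = j(c) = f(c) - cF(c)$ and $\inf_{[0,\rho]}j = \min\{j(0),j(\rho)\}$. Here $j(0)=0$ and $j(\rho) = \rho(k(\rho) - F(c)) \ge 0$, because $F$ decreasing and $c \ge (F^{-1}\circ k)(\rho)$ give $F(c) \le k(\rho)$; hence $\inf_{[0,\rho]}j = 0$ and the oscillation of $j$ over $[0,\rho]$ is $f(c)-cF(c)$. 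Splitting as in (\ref{fund comm ineq}), bounding $\vertiii{[(f-g)(A),X]}$ by this oscillation (the trivial bound, since $\vertiii{X}\le1$) and $\vertiii{[g(A),X]} = \lambda\vertiii{[A,X]} = \lambda c$ exactly, we get
\[
\vertiii{[f(A),X]} \le \vertiii{[(f-g)(A),X]} + \vertiii{[g(A),X]} \le (f(c)-cF(c)) + \lambda c = f(c),
\]
which is $f(\vertiii{[A,X]})$, as claimed.

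\emph{The supplementary formulas.} The inequality $(F^{-1}\circ k)(x) < x$ is just $F(x) < k(x)$ — which holds since $f'$ strictly decreasing gives $f'(x) < x^{-1}\int_0^x f' = k(x)$ — followed by the order-reversing $F^{-1}$. For $f = f_t$ we have $F(y) = t(y+t)^{-2}$ and $k(x) = (x+t)^{-1}$, so $F(y) = k(x)$ rearranges to $y = t^{1/2}(x+t)^{1/2} - t$. For $f(x) = x^r$ we have $F(y) = ry^{r-1}$ and $k(x) = x^{r-1}$, so $F(y) = k(x)$ gives $y = r^{1/(1-r)}x$; with $r = 1/2$ this is $x/4$, which recovers Proposition~\ref{Loring Vides}.

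I do not expect a genuine obstacle. The step requiring the most care — and the one the hypotheses are designed for — is showing that $\inf_{[0,\rho]}j$ is attained at the left endpoint rather than at $\rho$: this is precisely the lower bound $(F^{-1}\circ k)(\vertiii{A}) \le \vertiii{[A,X]}$, which by monotonicity of $F$ is equivalent to $j(\rho)\ge 0$, while the upper bound $\vertiii{[A,X]} \le \vertiii{A}$ is what keeps the maximizer $c$ inside $[0,\rho]$. A minor technical check is that $F^{-1}$ is defined at $k(\rho)$, which is settled by the range-of-$F$ remark in the set-up.
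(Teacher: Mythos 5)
Your proposal is correct and follows essentially the same route as the paper's proof: the linear comparison function $g(x)=F(c)x$ with $c=\vertiii{[A,X]}$, the trivial oscillation bound on $f-g$ over the spectral interval, and the exact commutator identity for the linear part, with the hypothesis $(F^{-1}\circ k)(\vertiii{A})\le c$ serving precisely to ensure $j(\vertiii{A})\ge 0$ (equivalently, that the spectrum of $A$ stays in the interval where $f\ge g$). Your direct verification that $k(\vertiii{A})$ lies in the range of $F$ and your handling of the endpoint $c=\vertiii{A}$ without a perturbation step are minor streamlinings of the same argument.
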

\begin{proof}
By (\ref{Kittaneh ineq}), we need only prove this result for the subset of functions $f$ that uniformly approximate all the types of functions considered.
So, by smoothening out a piecewise linear approximation of $f$, we may assume also that $f$ is smooth.
Then by adding a term of $\varepsilon \frac{x}{x+1}$ for $\varepsilon 
> 0$ small, we may assume also that $f$ is strictly concave.

Because $F=f'$ is strictly decreasing, its range is $\mathcal I = (f'(\infty), f'(0+)),$ where $f'(\infty) = \lim_{x \to \infty}f'(x)$.  Let $m \in \mathcal I$.

We define the function $g(x) = mx$. Let $j = f-g$. Then $j \geq 0$ on the interval $I = [0, x_1]$, where $x_1 > 0$ satisfies $f(x_1) = g(x_1)$ which exists because $f$ is strictly concave. Note that $j(0) = 0$.
Because $j'$ is strictly decreasing, the maximum of $j$ occurs when $F(x) = m$, which is at $x_\ast = F^{-1}(m)$, a point in the interior of $I$. Let $c$ belong to the interior of $I$. So,
\[\sup_{x \in I} j(x) - \inf_{x \in I}j(x) + cg'(0) = j(x_\ast)+mc=
f(x_\ast) - mx_\ast+mc.\]

If $F$ were differentiable, then this expression has a critical point as a function of $m$ when
\[f'(F^{-1}(m))(F^{-1})'(m)  - F^{-1}(m)
 - m (F^{-1})'(m)  = -c,\]
which simplifies to $F^{-1}(m) = c$. So, regardless of whether $F$ is differentiable, we just define $m = F(c) \in \mathcal I$. This implies that $F^{-1}(m) = c$ and $x_\ast = c$.
With this choice of $m$, we have
\[\sup_{x \in I} j(x) - \inf_{x \in I}j(x) + cg'(0) =
f(c).\]

Define $c = \vertiii{[A,X]}$. If $c=0$, then the desired inequality in the statement of the theorem is trivial. If the inequality is proved for all $c \in (0, \vertii{A})$ then by perturbing $f$ by scaling its argument, the result follows for $c = \vertii{A}$.
So, we can suppose that $c$ as defined belongs to the interior of $I$. Now, using the reasoning that went into proving (\ref{fund comm ineq}), we see that if the spectrum of $A$ belongs to $I$ then
\[
\vertiii{[f(A), X]} \leq \vertiii{[j(A), X]} + \vertiii{[g(A), X]} \leq\sup_{x \in I} j(x) - \inf_{x \in I}j(x) + cg'(0) = f(c), 
\]
as desired.

We rephrase the condition on $A$ as a condition on $c$. Since the defining feature of $x_1$ is that $x_1>0$ and $f(x_1) = g(x_1)$, we see that $k(x_1) = m = F(c)$. This provides 
\[\vertiii{A} \leq x_1 = k^{-1}(F(c)).\] 
Since both $k$ and $F$ are strictly decreasing, this provides
\[F^{-1}(k(\vertiii{A})) \leq c.\]
The condition $c \leq \vertiii{A}$ is automatic from $A \geq 0$ and $\vertiii{X} \leq 1$.

The calculations for $f(x) = f_t(x)$ are immediate consequences of $k(x) = 1/(x+t)$, 
$F(x) = t/(x+t)^2$, and
$F^{-1}(x) = t^{1/2}x^{-1/2}-t$.

The calculations for $f(x) = x^r$ are immediate consequences of $k(x) = x^{r-1}$, 
$F(x) = rx^{r-1}$, and
$F^{-1}(x) = x^{1/(r-1)}r^{-1/(r-1)}$.

The only thing that remains to be shown is that $F^{-1}(k(x)) < x$ in general. Because $k(x)$ is the slope of the secant line from $(0, 0)$ to $(x, f(x))$, by the strict concavity of $f$, $F(x) = f'(x) < k(x)$. This implies $F^{-1}(k(x)) < x$.
\end{proof}
\begin{remark}
We stated this result with $f(0) = 0$ for simplicity. The result holds more generally with a redefinition of $k$. Moreover, because every concave function can be uniformly approximated by a concave $C^1((0, \infty))$ function made strictly concave on a compact interval, we see that this result also applies. As long as $f'$, existing except on a countable set, is strictly decreasing we still obtain an interval of values of $c=\vertii{[A,X]}$ in which the inequality holds that has positive length. 
\end{remark}
\begin{remark}
This proof indirectly makes use of an equation like
\[f(t) = \inf_{\lambda > 0}\left(f'(\lambda)t+(f(\lambda)-\lambda f'(\lambda))\right)\]
used in Section 2 of \cite{Ando FC OMF} for an operator monotone function $f$ but holds more generally. 

We also note that the acknowledgements of that paper mentions the fact that a certain result in that paper were originally proved for operator monotone functions but extended to hold under certain much weaker concavity assumptions. This is similar to the scenario that our Theorem \ref{LV extension} finds itself in.
\end{remark}

Note that the estimate we just proved can give estimates for some convex functions by manipulating them sufficiently. For instance, consider the function $f(x) = |x|$ and $A$ with $-a_1\leq A \leq a_2$ for $a_1, a_2 > 0$. If we define $\tilde f(x) = x+a_1-|x-a_1|$ and $B = A+a_1$, we can apply the above commutator inequality to obtain a commutator inequality for $f(x)$. Because this inequality can actually be proven in a more straight-forward way, we just state it as a proposition. Note that in the case that $\min(a_1,a_2)$ is much  smaller than $\max(a_1,a_2)$, this is quite an improvement of the trivial bound $\vertiii{[|A|, X]} \leq \|A\|\vertiii{X}$ and of (\ref{abs ineq}).
\begin{prop}\label{abs prop}
Suppose that $A, X$ are operators in $B(\H)$ with $\H$ separable, $-a_1 \leq A\leq a_2$ for $a_1, a_2 \geq 0$, and $\vertiii{X} \leq 1$. 
Then 
\[ \vertiii{[|A|, X]} \leq 2\min(a_1, a_2) + \vertiii{[A,X]}.\]
Hence, if $\vertiii{[A,X]} \geq  \min(a_1, a_2)$ then 
\[ \vertiii{[|A|, X]} \leq 3\vertiii{[A,X]}.\]
\end{prop}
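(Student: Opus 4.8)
The plan is to avoid the substitution trick of the preceding remark entirely and instead write $|A|$ as $A$ plus twice a \emph{positive} operator of small norm, so that the commutator with $|A|$ differs from the commutator with $A$ only by a term that the trivial bound (\ref{Kittaneh ineq}) controls. Concretely, decompose $A = A_+ - A_-$ into its positive and negative parts, so that $|A| = A_+ + A_-$, and hence simultaneously $|A| = A + 2A_-$ and $|A| = -A + 2A_+$. Since the spectrum of $A$ lies in $[-a_1, a_2]$, the spectral calculus gives $0 \le A_- \le a_1$ and $0 \le A_+ \le a_2$; both $A_\pm$ are bounded, so $[A_\pm, X]$ lie in the ideal on which $\vertiii{-}$ is defined and have finite norm.

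The first step is to record these two identities and the spectral bounds. The second step is to apply (\ref{Kittaneh ineq}) together with $\vertiii{X} \le 1$ to obtain $\vertiii{[A_-, X]} \le a_1$ and $\vertiii{[A_+, X]} \le a_2$. The third step is the triangle inequality: using the decomposition $|A| = A + 2A_-$ when $a_1 \le a_2$ and $|A| = -A + 2A_+$ when $a_2 \le a_1$, and noting $\vertiii{[-A,X]} = \vertiii{[A,X]}$, we get in either case
\[
\vertiii{[|A|, X]} \le \vertiii{[A,X]} + 2\min(a_1, a_2).
\]
The final step is the ``hence'' clause, which is immediate: if $\vertiii{[A,X]} \ge \min(a_1, a_2)$ then $2\min(a_1, a_2) \le 2\vertiii{[A,X]}$, giving $\vertiii{[|A|, X]} \le 3\vertiii{[A,X]}$.

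There is no genuine obstacle here; the only points requiring a little care are (i) choosing whichever of the two sign decompositions produces $\min(a_1,a_2)$ rather than $\max(a_1,a_2)$, and (ii) confirming that $[A_\pm,X]$ has finite $\vertiii{-}$-norm so that (\ref{Kittaneh ineq}) applies in the stated separable generality. One could alternatively derive the bound from Theorem \ref{LV extension} applied to $\tilde f(x) = x + a_1 - |x-a_1|$ and $B = A + a_1$ as the remark suggests, but that route needs the smoothing/strict-concavity approximation, whereas the direct argument above does not and also makes transparent why this improves on both the trivial bound $\vertiii{[|A|,X]}\le\|A\|\vertiii{X}$ and on (\ref{abs ineq}) when $\min(a_1,a_2)$ is small compared to $\max(a_1,a_2)$.
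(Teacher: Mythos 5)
Your proposal is correct and is essentially the paper's own argument: since $|A|-A = 2A_-$, your decomposition $|A| = A + 2A_-$ with the bound $\vertiii{[A_-,X]} \le a_1$ from (\ref{Kittaneh ineq}) is exactly the paper's step of applying the trivial bound to $[A-|A|,X]$ (whose spectral range has diameter $2a_1$), and your choice between the two sign decompositions plays the same role as the paper's replacement of $A$ by $-A$ to assume $a_1 \le a_2$.
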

\begin{proof}
By replacing $A$ with $-A$ if necessary, we may assume that $a_1 \leq a_2$. Note that $\min_{x \in [-a_1, a_2]}(x-|x|) = -2a_1$ and $\max_{x \in [-a_1, a_2]}(x-|x|) = 0$. Then by the argument that goes into (\ref{fund comm ineq}),  
\[\vertiii{[|A|, X]} \leq  \vertiii{[A-|A|, X]} + \vertiii{[A, X]} \leq 2\min(a_1, a_2) + \vertiii{[A,X]}.\]
\end{proof}
Note the tight inequality from \cite{OMC} for all $S, T$ self-adjoint:
\begin{align*}
\|\,|S|-|T|\,\| &\leq C_{abs}\|S-T\|\log\left(2+\log\left(\frac{\|S\|+\|T\|}{\|S-T\|}\right)\right) \\
&\leq C_{abs}\|S-T\|\log\left(2+\log\left(\frac{2\max(\|S\|,\|T\|)}{\|S-T\|}\right)\right).
\end{align*}
So, if $A$ and $X$ are as above then using $\Omega^\flat_{f,\mathfrak F} \leq 2\Omega_{f, \mathfrak F}$ for $\mathfrak F = [-a_1, a_2] \supset [-\|A\|, \|A\|]$, we have
\[\|\,[|A|,X]\,\| \leq 2C_{abs}\log\left(2+\log\left(\frac{2\vertii{A}}{\|[A,X]\|}\right)\right)\|[A,X]\|.\]

Compare also to (\ref{abs ineq}) which for the operator norm can be rewritten as:
\[\vertii{[|A|,X]} \leq \left(1 + \frac{\vertii{A}}{2\vertii{[A,X]}}\right)\vertii{[A,X]}.\]
So, we see that our rather trivial inequality of Proposition \ref{abs prop} can be an improvement in the case that $\min(a_1, a_2)$ is much smaller than $\max(a_1,a_2) \geq \vertii{A}$. 

The main downside of the inequality gotten in Theorem \ref{LV extension} is that it requires the commutator to not be small. However, this is a general feature of many commutator estimates of Lipschitz functions that are not commutator Lipschitz, including the two inequalities that we just stated above. See for instance Corollary 11.7 of \cite{OMC} which states that if $A$ is self-adjoint with $\sigma(A) \subset [a,b]$ and $\|X\| \leq 1$ then for all self-adjoint $B$,
\[\|[f(A)X-Xf(B)]\| \leq C_{abs}\|f\|_{\Lip(\R)}\log\left(2+\frac{b-a}{\|AX-XB\|}\right)\|AX-XB\|.\]
This was an improvement of the prior result in \cite{Farforovskaya Commutators of Functions in Perturbation Theory} which has the similar property that if one requires that the norm of a commutator or generalized commutator be bounded below then one obtains what is effectively a commutator Lipschitz bound for a Lipschitz function.
Our result, like that of Loring and Vides is an example of a similar type of inequality which provides simple concrete bounds. However, it has nothing to say in the case that the commutator is below that certain threshold, unlike the  above results that we just cited from the literature.

\vspace{0.05in}

We finish this section with noting that the approach of the previous theorem actually works without restriction when the commutator $\vertiii{-}$-Lipschitz constant equals the Lipschitz constant of Lipschitz functions. In that case, there is no need to restrict the spectrum of $A$ and we obtain Conjecture \ref{Conjecture 1} for all such unitarily-invariant norms.
\begin{thm}
Let $\H$ be a separable Hilbert space and $\vertiii{-}$ a unitarily invariant norm on $B(\H)$ such that $\vertiii{h}_{\CL([0, \infty))} = \vertii{h}_{\Lip([0, \infty))}$ for any Lipschitz function $h$ on $[0, \infty)$.

Suppose that $A, X$ are operators in $B(\H)$ with $A \geq 0$ and $\vertiii{X} \leq 1$. 
Let $f\geq 0$ be a concave function on $[0, \infty)$. Then
\[ \vertiii{[f(A), X]} \leq f\left(\vertiii{[A,X]}\right).\]
\end{thm}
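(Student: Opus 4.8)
The plan is to run the argument behind inequality~(\ref{fund comm ineq}) exactly as in the proof of Theorem~\ref{LV extension}, but with the linear auxiliary function $g(x)=mx$ used there replaced by a \emph{Lipschitz} function that coincides with $f$ on a half-line. The hypothesis $\vertiii{h}_{\CL([0,\infty))}=\vertii{h}_{\Lip([0,\infty))}$ is then precisely what lets us control $\vertiii{[g(A),X]}$ without imposing any restriction on the spectrum of $A$, which is the only place the spectral bound was used in Theorem~\ref{LV extension}.

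First I would reduce. Since $[f(A),X]=[(f-f(0))(A),X]$ and $f\ge f-f(0)$, and since a concave function bounded below on $[0,\infty)$ is non-decreasing, we may replace $f$ by $f-f(0)$ and assume $f\ge 0$, concave, non-decreasing, with $f(0)=0$. As in the proof of Theorem~\ref{LV extension}, using~(\ref{Kittaneh ineq}) and approximating $f$ uniformly on $[0,\vertii{A}]$ by smoothings of piecewise-linear concave approximants (then adding a small multiple of $x/(x+1)$), we may assume in addition that $f$ is $C^1$ on $(0,\infty)$ and strictly concave. Put $c=\vertiii{[A,X]}$. If $c=0$ then $[A,X]=0$, so $A$ and $X$ commute, hence so do $f(A)$ and $X$, and both sides vanish; so assume $c>0$, noting $c\le\vertii{A}$ by~(\ref{Kittaneh ineq}).

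Next I would construct $g$. Set $m=f'(c)$; strict concavity together with $f(0)=0$, $f\ge 0$ forces $0<m<\infty$, and $f(c)-mc=\int_0^c\bigl(f'(t)-f'(c)\bigr)\,dt>0$. The function $j_0(x)=f(x)-mx$ is strictly increasing on $[0,c]$, strictly decreasing on $[c,\infty)$, and tends to $-\infty$, so it has a unique zero $x_1\in(c,\infty)$, where $f(x_1)=mx_1$. Define $g$ to equal $mx$ on $[0,x_1]$ and $f(x)$ on $[x_1,\infty)$; the pieces match at $x_1$, where the slope drops from $m$ to $f'(x_1)<m$, so $g$ is continuous, concave, non-decreasing, $g(0)=0$, with $0\le g'\le m$ everywhere, whence $g$ is Lipschitz on $[0,\infty)$ with $\vertii{g}_{\Lip([0,\infty))}=m$. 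Put $j=f-g$. On $[0,x_1]$ we have $j=j_0\ge 0$ with maximum $j(c)=f(c)-mc$ and $j(0)=j(x_1)=0$, while $j\equiv 0$ on $[x_1,\infty)$; hence $\sup_{x\ge 0}j-\inf_{x\ge 0}j=f(c)-mc$.

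Finally I would assemble the estimate as in~(\ref{fund comm ineq}). Since $A\ge 0$ has spectrum in $[0,\infty)$, the trivial bound gives $\vertiii{[j(A),X]}\le\bigl(\sup_{x\ge0}j-\inf_{x\ge0}j\bigr)\vertiii{X}\le f(c)-mc$, while by the definition of the $\CL_{\vertiii{-}}([0,\infty))$ norm together with the hypothesis, $\vertiii{[g(A),X]}\le\vertiii{g}_{\CL([0,\infty))}\,\vertiii{[A,X]}=\vertii{g}_{\Lip([0,\infty))}\,c=mc$. Adding,
\[
\vertiii{[f(A),X]}\le\vertiii{[j(A),X]}+\vertiii{[g(A),X]}\le(f(c)-mc)+mc=f(c)=f\bigl(\vertiii{[A,X]}\bigr).
\]
I expect the only genuine work to be bookkeeping: the monotonicity and sign claims for $j_0$ (hence the existence and location of $x_1$ and the fact that $j\ge0$ on all of $[0,\infty)$), plus checking that the smoothing reduction remains legitimate when $f$ is unbounded or has $f'(0^+)=\infty$ — which it is, since by construction $g$ has slope at most $m<\infty$ and so never interacts with the steep part of $f$ near $0$. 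It is worth emphasizing that the norm hypothesis is essential: the $g$ above has a corner at $x_1$ and so is in general not operator Lipschitz, which is exactly why this argument cannot be pushed to the operator norm, consistent with Example~\ref{counter-example}.
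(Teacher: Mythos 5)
Your proposal is correct and follows essentially the same route as the paper's own proof: the same piecewise auxiliary function $g$ (linear with slope $m=f'(c)$ up to the crossing point $x_1$, then equal to $f$), the same computation $\sup j-\inf j=f(c)-mc$, and the same use of the hypothesis $\vertiii{g}_{\CL([0,\infty))}=\vertii{g}_{\Lip([0,\infty))}=m$ to close the estimate via (\ref{fund comm ineq}). The extra bookkeeping you supply (the $c=0$ case, the reduction to $f(0)=0$, and the verification that $j_0\to-\infty$ so $x_1$ exists) is all sound and only makes the paper's terser argument explicit.
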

\begin{proof} Without loss of generality, we may assume also that $f(0) = 0$. 
Without loss of generality we may assume also that $f$ is smooth and $f$ is strictly concave.

With $F(x) = f'(x)$, $F$ is strictly decreasing.
Let $c = \vertiii{[A,X]}$ and $m = F(c)$. Because $f$ is strictly concave, there is a unique point $x_1$ such that $f(x_1) = mx_1$. Define the piecewise function 
\[g(x) = \left\{\begin{array}{ll}
mx, & 0 \leq x < x_1 \\
f(x), & x \geq x_1\end{array}\right..\] 
Because $f$ is concave, $m \geq F(x) \geq 0$ for all $x \geq x_1$. In particular, $g$ is concave with $\|g\|_{\Lip([0,\infty))} = \|g'\|_{L^\infty([0,\infty))} = m$.

Let $j = f-g$. Then $j \geq 0$ on the interval $I = [0, x_1]$ and $j(x) = 0$ on $[x_1, \infty)$ . Because $j'$ is strictly decreasing on $I$, the maximum of $j$ occurs when $F(x) = m$, which is at $x_\ast = F^{-1}(m)=c$. 

So, by (\ref{fund comm ineq}), we see that
\begin{align*}
\vertiii{[f(A), X]}&\leq \left(\sup_{x \geq 0}(f-g)(x)-\inf_{x \geq 0}(f-g)(x)\right)\vertiii{X} +  \vertiii{g}_{\CL([0, \infty))}\|[A,X]\| \\
\leq
&\sup_{x \in I} j(x) - \inf_{x \in I}j(x) + mc= j(x_\ast)+mc=
f(c) - mc+mc = f(c).
\end{align*} 
\end{proof}
\begin{remark}
The Hilbert-Schmidt norm on $B(\H)$ for $\H$ separable has this property. As a consequence, when $\vertiii{-}$ is the Hilbert-Schmidt norm, this provides a short proof of the result (Theorem 3.3, \cite{Jocic}). Moreover, we even completely remove the condition that $f$ be operator monotone. 
\end{remark}

\newpage

\textbf{ACKNOWLEDGEMENTS}. The author would like to thank Eric A. Carlen for helpful feedback on this paper and Terry Loring for a helpful conversation on his work on this topic.

This research was partially supported by NSF grant DMS-2055282.

\vspace{0.5in}

\end{document}